\newtheorem{theorem}{Theorem}[section]
\newtheorem{lemma}[theorem]{Lemma}
\newtheorem{cor}[theorem]{Corollary}
\newtheorem{prop}[theorem]{Proposition}
\newtheorem{dfn}[theorem]{{Definition}}
\newtheorem*{rmk}{{Remark}}
\newtheorem*{exbdeig}{{Examples of bounded eigenfunctions}}
\newtheorem*{thm1}{{Theorem 1.4}}
\newtheorem*{thm2}{{Theorem 1.5}}
\newtheorem*{thm3}{{Theorem 1.6}}
\newtheorem*{thm4}{{Theorem 1.7}}
\numberwithin{equation}{section}
\newcommand {\N}{\mathbb{N}} %% positive integers
\newcommand {\R}{\mathbb{R}} %% reals
\newcommand {\HH}{{\mathcal H}} %% horosphere
\DeclareMathOperator{\id}{id}
\DeclareMathOperator{\vol}{vol}
\DeclareMathOperator{\supp}{supp}
\DeclareMathOperator{\grad}{grad}
\DeclareMathOperator{\rank}{rank}
\DeclareMathOperator{\tr}{tr}
\DeclareMathOperator{\Jac}{Jac}
\begin{document}
%%%%%%%%%
% eingereicht:16.04.2014;Geometric Analysis
%%%%%
\title[Rank one asymptotically harmonic manifolds]{Harmonic functions on rank one asymptotically harmonic manifolds}

\author{Gerhard Knieper and Norbert Peyerimhoff }
\date{\today}
\address{Faculty of Mathematics,
Ruhr University Bochum, 44780 Bochum, Germany}
\email{gerhard.knieper@rub.de}
\address{Department of Mathematical Sciences, Durham University, Durham DH1 3LE, UK}
\email{norbert.peyerimhoff@dur.ac.uk}
\subjclass[2010]{Primary 53C25, Secondary 37D20, 53C23, 53C40}
\keywords{asymptotically harmonic manifolds, harmonic functions,
  visibility measures, Gromov hyperbolicity, Dirichlet problem at
  infinity, mean value property at infinity}

%%%%%%%%%%%%%%%%%%%%%%%%%%%%%%%%%%%%%%%%%%%%%%%%%%%%%%%%%%%%%%%%%%%%%%

\begin{abstract}
  Asymptotically harmonic manifolds are simply connected complete
  Riemannian manifolds without conjugate points such that all
  horospheres have the same constant mean curvature $h$. In this
  article we present results for harmonic functions on rank one
  asymptotically harmonic manifolds $X$ with mild curvature
  boundedness conditions. Our main results are (a) the explicit
  calculation of the Radon-Nykodym derivative of the visibility
  measures, (b) an explicit integral representation for the solution
  of the Dirichlet problem at infinity in terms of these visibility
  measures, and (c) a result on horospherical means of bounded
  eigenfunctions implying that these eigenfunctions do not admit
  non-trivial continuous extensions to the geometric
  compactification $\overline{X}$.
\end{abstract}

%%%%%%%%%%%%%%%%%%%%%%%%%%%%%%%%%%%%%%%%%%%%%%%%%%%%%%%%%%%%%%%%%%%%%%

\maketitle

\tableofcontents
%%%%%%%%%%%%%%%%%%%%%%%%%%%%%%%%%%%%%%%%%%%%%%%%%%%%%%%%%%%%%%%%%%%%%%

\section{Introduction}

Manifolds with asymptotically harmonic metrics were first introduced
by Ledrappier (\cite[Thm. 1]{Led}) in the special case of negative
curvature in connection with rigidity of measures related to the
Dirichlet problem (harmonic measure) and the dynamics of the geodesic
flow (Bowen-Margulis measure). One of the equivalent characterisations
of asymptotically harmonic metrics there was that all horospheres have
constant mean curvature $h \ge 0$. We express this geometric property
in terms of Jacobi tensors (see Definion \ref{def:asymharm}
below). Let $(X,g)$ be a complete Riemannian manifold without
conjugate points and let $\pi: SX \to X$ be the canonical footpoint
projection from the unit tangent bundle. For $v \in SX$ let $c_v: {\mathbb
  R} \to X$ be the unique geodesic given by $c_v'(0) = v$. Let
$S_{v,r}$ and $U_{v,r}$ be the orthogonal Jacobi tensors along $c_v$,
defined by $S_{v,r}(0) = U_{v,r}(0) = {\rm id}$ and $S_{v,r}(r) = 0$
and $U_{v,r}(-r) = 0$. Note that we have $U_{v,r}(t) =
S_{-v,r}(-t)$. The stable and unstable Jacobi tensors $S_v$ and $U_v$
are then defined as the Jacobi tensors along $c_v$ with initial
conditions $S_v(0) = U_v(0) = {\rm id}$ and $S_v'(0) = \lim_{r \to
  \infty} S_{v,r}'(0)$ and $U_v'(0) = \lim_{r \to \infty}
U_{v,r}'(0)$. They are related by $U_v(t) = S_{-v}(-t)$. For
simplicity of notation, we introduce $U(v) = U_v'(0)$ and $S(v) =
S_v'(0)$. (For more detailed information on Jacobi tensors see, e.g.,
\cite{Kn1}.)

\begin{dfn} \label{def:asymharm}
  An {\em asymptotically harmonic manifold} $(X,g)$ is a complete,
  simply connected Riemannian manifold without conjugate
  points such that for all $v \in SX$ we have $\tr U(v) = h$ for a
  constant $h \ge 0$.
\end{dfn}

The manifolds considered in this article are {\em rank one}
asymptotically harmonic manifolds. The notion of rank has been
introduced by Ballmann, Brin and Eberlein in \cite{BBE} for
nonpositively curved manifolds as the dimension of the parallel Jacobi
fields along geodesics. Since we do not assume nonpositive curvature,
the notion of rank has to be understood in the following generalized
sense given in \cite[Def. 3.1]{Kn2}:

\begin{dfn} Let $(X,g)$ be a complete simply connected Riemannian
  manifold without conjugate points. For $v \in SX$ let $D(v) =
  U(v)-S(v)$ and we define
  $$
  \rank(v) = \dim (\ker D(v)) +1
  $$
  and
  $$
  \rank(X) = \min  \{ \rank(v) \mid  v \in SX \}.
  $$
\end{dfn}

In \cite{KnPe2}, we proved equivalence of the following four
properties for asymptotically harmonic manifolds $X$ under the mild
curvature boundedness condition
\begin{equation} \label{eq:curvboundcond} \Vert R \Vert \le R_0 \quad
  {\rm and} \quad \Vert \nabla R \Vert \le R_0'
\end{equation}
for some constants $R_0,R_0' > 0$: (a) $X$ has rank one, (b) $X$ has
Anosov geodesic flow, (c) $X$ is Gromov hyperbolic, and (d) $X$ has
purely exponential volume growth with growth rate $h_{vol}=h$. These
equivalences were first proved for noncompact harmonic manifolds in
\cite{Kn2} and for asymptotically harmonic manifolds admitting compact
quotients in \cite{Zi1}. Besides negatively curved symmetric spaces,
Damek-Ricci spaces provide examples of rank one harmonic and therefore
also asymptotically harmonic manifolds, since they all have purely
exponential volume growth. As a consequence, all Damek-Ricci spaces are
Gromov hyperbolic. (Note that all non-symmetric Damek-Ricci
spaces admit zero-curvature.) In this article, we use the above 
equivalences to study harmonic functions on rank one asymptotically
harmonic manifolds $(X,g)$ satisfying \eqref{eq:curvboundcond}. Let us
discuss the results of this paper in more detail.

In Sections 2 and 3, we introduce the geometric boundary $X(\infty)$
via equivalence classes of geodesic rays and the canonical maps
$\varphi_p: S_pX \to X(\infty)$, $\varphi_p(v) = c_v(\infty)$. These
maps have natural extensions $\bar \varphi_p$ to the geometric
compactification $\overline{X} = X \cup X(\infty)$, and we show that
these extensions are homeomorphisms. The visibility measures $\{ \mu_p
\}$ on $X(\infty)$ are then defined as follows:

\begin{dfn} \label{def:vismeas} Let ${\mathcal M}_1(X(\infty))$ denote
  the space of Borel probability measures on $X(\infty)$. For every $p
  \in X$, let $\mu_p \in {\mathcal M}_1(X(\infty))$ be defined by
  $$ \int_{X(\infty)} f(\xi)\, d\mu_p(\xi) = \frac{1}{\omega_n}
  \int_{S_pX} f(\varphi_p(v))\, d\theta_p(v) \quad \forall\, f \in
  C(X(\infty)), $$ where $n= {\rm dim}(X)$ and $\omega_n$ is the
  volume of the $(n-1)$-dimensional standard unit sphere and
  $d\theta_p$ is the volume element of $S_pX$ induced by the
  Riemannian metric. $\mu_p$ is called the {\em visibility measure} of
  $(X,g)$ at the point $p$.
\end{dfn}

Sections 4 and 5 are concerned with the explicit calculation of the
Radon-Nykodym derivative of the visibility measures. To state the
result (Theorem \ref{thm:radon-nykodym} below), we need Busemann
functions. Let $v \in S_qX$ and $\xi = c_v(\infty) \in
X(\infty)$. Then the Busemann function (associated to $v \in S_qX$ or
to $(q,\xi) \in X \times X(\infty)$) is defined as
\begin{equation} \label{eq:Busemann}
b_v(p) = b_{q,\xi}(p) = \lim_{t \to \infty} d(c_v(t),p)-t. 
\end{equation}

\begin{theorem} \label{thm:radon-nykodym} Let $(X,g)$ be a rank one
  asymptotically harmonic manifold satisfying
  \eqref{eq:curvboundcond}. Let $(\mu_p)_{p \in X}$ be the associated
  family of visibility measures. Then these measures are pairwise
  absolutely continuous and we have
  $$ \frac{d\mu_p}{d\mu_q}(\xi) = e^{-h b_{q,\xi}(p)}. $$
\end{theorem}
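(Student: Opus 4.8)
The plan is to reduce the statement to the computation of a single Jacobian and then to evaluate that Jacobian by a limiting procedure along geodesics tending to $\xi$. Since $\mu_p = \frac{1}{\omega_n}(\varphi_p)_\ast \theta_p$ and, by Sections 2--3, the extensions $\bar\varphi_p$ are homeomorphisms of $\overline{X}$, both measures can be transported to $S_qX$ by $\varphi_q^{-1}$. Writing $\Phi = \varphi_q^{-1}\circ\varphi_p : S_pX \to S_qX$ for the connecting map (which sends $v$ to the unit vector $w\in S_qX$ with $c_w(\infty)=c_v(\infty)$), the claim becomes the assertion that the Jacobian of $\Phi$ with respect to the volume elements $\theta_p,\theta_q$ equals $e^{-h b_{q,\xi}(p)}$ at $\xi=\varphi_p(v)$. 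Once this Jacobian is shown to be finite and strictly positive, mutual absolute continuity of the whole family follows at once, so the theorem rests entirely on this pointwise computation.

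To compute the Jacobian I would realise both visibility measures on a common ``screen'' far out towards $\xi$. For $v\in S_qX$ let $A_v$ be the Jacobi tensor along $c_v$ with $A_v(0)=0$ and $A_v'(0)=\id$; then the area element of the geodesic sphere $\partial B(q,t)$ is $\det A_v(t)\,d\theta_q$, and for large $t$ these spheres converge to the horosphere through $c_v(t)$. The decisive input from asymptotic harmonicity is the exact evolution of the unstable tensor: since $\frac{d}{dt}\log\det U_v(t)=\tr\bigl(U_v'(t)U_v(t)^{-1}\bigr)=\tr U(c_v'(t))=h$ and $\det U_v(0)=1$, one obtains $\det U_v(t)=e^{ht}$ for all $t$. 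Factoring $A_v(t)=U_v(t)B_v(t)$, the Wronskian relation between $A_v$ and $U_v$ gives $B_v'(t)=(U_v(t)^\ast U_v(t))^{-1}$, so that $\det A_v(t)=e^{ht}\det B_v(t)$ with $B_v(t)=\int_0^t (U_v^\ast U_v)^{-1}\,ds$ convergent as $t\to\infty$.

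The Busemann function enters through bookkeeping of radii. Fixing the horosphere $H_s=\{b_{q,\xi}=s\}$ centred at $\xi$, the radial geodesic from $q$ meets $H_s$ at parameter $-s$, whereas the radial geodesic from $p$ meets it at parameter $b_{q,\xi}(p)-s$; the difference of the two travel lengths is precisely $b_{q,\xi}(p)$. Transporting $\theta_p$ and $\theta_q$ to $H_s$ by the respective radial sprays and letting $s\to-\infty$, the two factors $e^{ht}$ combine to $e^{-h b_{q,\xi}(p)}$, while the remaining factors coming from $\det B_v$ cancel against the limiting Jacobian of the identification of the two radial shadows on $H_s$. Establishing this last cancellation cleanly is the crux of the argument.

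The main obstacle is exactly this limit: one must justify that the two point--source sprays, after removing the common exponential factor $e^{ht}$, match in the limit along asymptotic geodesics, and that the convergence of geodesic spheres to horospheres is uniform enough to pass to the limit in the areas. This is where the hypotheses are used: by \cite{KnPe2} rank one is equivalent to an Anosov geodesic flow, and together with the curvature bounds \eqref{eq:curvboundcond} this yields uniform hyperbolicity, hence exponential convergence of the unstable Jacobi tensors $U_{c_v'(t)}$ and of the associated second fundamental forms along asymptotic rays. These estimates supply the required uniform control, make the shadow identification $C^1$-close to an isometry in the limit, and simultaneously bound the Jacobian away from $0$ and $\infty$ on $S_pX$, thereby also delivering the asserted mutual absolute continuity.
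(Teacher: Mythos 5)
Your overall route is in substance the paper's own: screens far out toward $\xi$, the factorization $A_v(t)=U_v(t)\int_0^t(U_v^*U_v)^{-1}(s)\,ds$ (this is exactly Lemma \ref{lem:jacrep}), $\det U_v(t)=e^{ht}$ from asymptotic harmonicity, Busemann bookkeeping producing $e^{-hb_{q,\xi}(p)}$, and dominated convergence; using horospheres $H_s$ as screens instead of the spheres $S_t(p)$ and the map $B_t$ is only a cosmetic variation. But there is a genuine gap at exactly the point you yourself flag as the crux. Writing $B_v(t)=\int_0^t(U_v^*U_v)^{-1}(s)\,ds$, Lemma \ref{lem:jacrep}(ii) gives $B_v(t)=(U_v'(0)-S_{v,t}'(0))^{-1}\to D(v)^{-1}$, so after your bookkeeping the candidate derivative comes out as $e^{-hb_{q,\xi}(p)}\cdot\det D(v)/\det D(u)$, where $u\in S_pX$ and $v\in S_qX$ both point at $\xi$. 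The theorem therefore requires $\det D(u)=\det D(v)$ for asymptotic unit vectors with distinct footpoints. The tools you invoke --- Anosov geodesic flow, exponential convergence of unstable tensors and second fundamental forms along asymptotic rays --- control the \emph{other} factor, namely the identification of the two radial shadows on $H_s$ (the analogue of the paper's $\langle N_p(F_t(v)),N_q(F_t(v))\rangle\to 1$, Lemma \ref{lem:NpNq}), but they say nothing about the ratio $\det D(u)/\det D(v)$: $D(u)$ and $D(v)$ are differences of stable and unstable shape operators at the basepoints $p$ and $q$, and convergence of the rays out near infinity does not by itself transfer back to these initial conditions. The paper closes this by quoting \cite[Thm.~1.3]{KnPe2}: under the rank one hypothesis, $\det(U_w'(0)-S_{w,t}'(0))$ converges \emph{monotonically to a universal constant} $A>0$ independent of $w$; monotonicity plus Dini then gives uniform convergence on compacts, which is needed twice --- because the comparison direction varies with the screen parameter, and because it is what actually delivers the uniform Jacobian bound behind your "bounded away from $0$ and $\infty$" claim that justifies dominated convergence.

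If you want to stay inside your framework without citing that result wholesale, the missing cancellation can be supplied as follows: the Riccati equations $U'+U^2+R=0=S'+S^2+R$ yield
\begin{equation*}
\frac{d}{dt}\log\det D(\Phi^tv) \;=\; \tr\bigl(D'D^{-1}\bigr)(\Phi^tv)
\;=\; -\bigl(\tr U+\tr S\bigr)(\Phi^tv) \;=\; -(h-h)\;=\;0,
\end{equation*}
so $\det D$ is invariant under the geodesic flow on an asymptotically harmonic manifold; combined with continuity of $U$ and $S$ and the exponential contraction of strong stable leaves (\cite[Cor.~2.6]{KnPe2}), flow invariance forces $\det D(u)=\det D(v)$ whenever $c_u$ and $c_v$ are asymptotic. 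Some such argument (or the universal constant of \cite[Thm.~1.3]{KnPe2}, together with the uniformity discussion above) must be made explicit; as written, the central cancellation is asserted rather than proved, and the justification offered does not yield it.
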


An analogous result on the Radon-Nykodym derivative for asymptotically
harmonic manifolds in the case of pinched negative curvature was given
in \cite[Prop. 6.1]{CaSam}.

Since our rank one asymptotically harmonic manifolds $(X,g)$ are
Gromov hyperbolic and have positive Cheeger constants (see
\cite[Prop. 5.3]{KnPe2}), the general theory of Ancona
\cite{Anc1,Anc2} implies that the geometric boundary and the Martin
boundary agree and that the Dirichlet problem at infinity can be
solved. In Section 6 we give an alternative direct proof of this
latter fact and give an explicit integral representation for the
solution of the Dirichlet problem at infinity in terms of the
visibility measures:

\begin{theorem} \label{thm:dirichprob} Let $(X,g)$ be a rank one
  asymptotically harmonic manifold satisfying
  \eqref{eq:curvboundcond}. Let $f: X(\infty) \to {\R}$ be a
  continuous function. Then there exists a unique harmonic function
  $H_f: X \to {\R}$ such that
  \begin{equation} \label{eq:convdirich}
  \lim\limits_{x \to \xi} H_f(x) = f(\xi).
  \end{equation}
  Moreover, $H_f$ has the following integral presentation:
  $$
  H_f(x) = \int\limits_{X(\infty)} f(\xi) d \mu_x(\xi),
  $$
  where $\{\mu_x\}_{x \in X} \subset {\mathcal M}_1(X(\infty))$ are
  the visibility probability measures.
\end{theorem}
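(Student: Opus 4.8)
The plan is to establish three things: that the integral formula
$$
H_f(x) = \int_{X(\infty)} f(\xi)\, d\mu_x(\xi)
$$
defines a harmonic function, that it has the correct boundary behaviour \eqref{eq:convdirich}, and that harmonic functions with prescribed continuous boundary values are unique. Harmonicity is the step where the Radon--Nykodym computation of Theorem \ref{thm:radon-nykodym} does the real work, so I would treat it first. Fix a reference point $q$ and write $d\mu_x(\xi) = e^{-h b_{q,\xi}(x)}\, d\mu_q(\xi)$. Then
$$
H_f(x) = \int_{X(\infty)} f(\xi)\, e^{-h b_{q,\xi}(x)}\, d\mu_q(\xi),
$$
so it suffices to differentiate under the integral sign and show that each kernel $x \mapsto e^{-h b_{q,\xi}(x)}$ is harmonic. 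This is exactly the point where the asymptotic harmonicity hypothesis enters: the Busemann function $b_{q,\xi}$ has Laplacian equal to the mean curvature of the horospheres, which is the constant $h$, so $\Delta b_{q,\xi} = h$ and $\|\grad b_{q,\xi}\| = 1$, giving $\Delta\, e^{-h b_{q,\xi}} = (h^2 \|\grad b_{q,\xi}\|^2 - h \Delta b_{q,\xi})\, e^{-h b_{q,\xi}} = 0$. The curvature boundedness \eqref{eq:curvboundcond} supplies the uniform regularity of Busemann functions in $x$ needed to justify moving $\Delta$ inside the integral.

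For the boundary behaviour I would argue that, given $\xi_0 \in X(\infty)$ and $\varepsilon > 0$, continuity of $f$ lets me choose a neighbourhood $V$ of $\xi_0$ with $|f(\xi) - f(\xi_0)| < \varepsilon$ on $V$. Then
$$
|H_f(x) - f(\xi_0)| \le \int_{V} |f(\xi) - f(\xi_0)|\, d\mu_x(\xi) + 2\|f\|_\infty\, \mu_x(X(\infty) \setminus V),
$$
and the task reduces to showing that the visibility measures $\mu_x$ concentrate on $\xi_0$ as $x \to \xi_0$, i.e. $\mu_x(X(\infty)\setminus V) \to 0$. This is where Gromov hyperbolicity and the structure of the geometric compactification $\overline{X}$ are used: as $x$ approaches $\xi_0$ inside a cone, geodesics from $x$ that miss $\xi_0$ must bend sharply, and the asymptotic harmonicity controls how the solid-angle measure $\theta_x$ of such directions decays. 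I expect this concentration estimate to be the main obstacle, since it requires quantitative control of the visibility measure of complements of boundary neighbourhoods under the compactification homeomorphisms $\bar\varphi_p$.

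Finally, for uniqueness I would invoke the maximum principle together with Gromov hyperbolicity. If $H_1, H_2$ are two bounded harmonic functions both converging to $f$ at every boundary point, then $u = H_1 - H_2$ is harmonic and extends continuously by $0$ to all of $X(\infty)$; since $\overline{X}$ is compact and $u$ is continuous on it, $u$ attains its maximum and minimum, and the maximum principle for harmonic functions forces an interior extremum to be ruled out, so the extrema occur on $X(\infty)$ where $u = 0$, giving $u \equiv 0$. The only delicate point here is ensuring $H_f$ is bounded and that the extension to $\overline{X}$ is genuinely continuous, which follows once the boundary convergence in the previous paragraph is established uniformly enough. I would therefore organise the proof so that the concentration estimate is proved first as a lemma and then reused in both the boundary-value and uniqueness arguments.
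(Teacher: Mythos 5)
Your architecture coincides with the paper's: harmonicity is proved by rewriting $H_f(x)=\int_{X(\infty)} f(\xi)\,e^{-hb_{p,\xi}(x)}\,d\mu_p(\xi)$ via Theorem \ref{thm:radon-nykodym} and computing $\Delta_x\, e^{-hb_{p,\xi}} = \left(h^2\Vert\grad b_{p,\xi}\Vert^2 - h\,\Delta b_{p,\xi}\right)e^{-hb_{p,\xi}} = 0$ (the paper justifies interchanging $\Delta$ with the integral using $|b_{p,\xi}(x)|\le d(p,x)$ and smoothness of Busemann functions, which it gets from $\Delta b_{p,\xi}=h$ and elliptic regularity), and uniqueness follows from the maximum principle exactly as you sketch. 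But the boundary step \eqref{eq:convdirich}, which you correctly identify as the crux, is not actually proved in your proposal: you reduce it to the concentration $\mu_x(X(\infty)\setminus V)\to 0$ as $x\to\xi_0$ and then only gesture at geodesics ``bending sharply'' and decay of the solid-angle measure $\theta_x$. That concentration estimate is the one genuinely substantive ingredient, and nothing earlier in your argument supplies it, so as written the proof is incomplete.

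The paper fills this gap with Proposition \ref{prop:horoincone}: for every $\delta>0$ there is $C_1=C_1(\delta)$ with $b_v(q)\ge d(p,q)-C_1$ for all $v\in S_pX$ and all $q\notin C(v,\delta)$. This is deduced from a uniform bound $2(c_v(t)\,\vert\, q)_p\le C_1$ on Gromov products for $q\notin C(v,\delta)$, which in turn rests on Theorem \ref{thm:gromconv} (divergent Gromov products are equivalent to angles tending to zero and distances to infinity), i.e.\ on Gromov hyperbolicity together with the uniform divergence of geodesics (Corollary \ref{cor:uniformdiv}). Note also that the mechanism is the reverse of your heuristic: the paper keeps the angular measure fixed at the basepoint $p$, writing $\mu_x(X(\infty)\setminus V)=\frac{1}{\omega_n}\int_{\{v\,:\,\varphi_p(v)\notin V\}} e^{-hb_v(x)}\,d\theta_p(v)$, and estimates the \emph{kernel} on the bad directions: if $x\in C(v_0,\delta/2)$ and $\angle_p(v_0,v)>\delta$, then $x\notin C(v,\delta/2)$, so $e^{-hb_v(x)}\le e^{hC_1}e^{-hd(p,x)}\to 0$, while the directions near $v_0$ contribute at most $\epsilon\int_{S_pX}e^{-hb_v(x)}\,d\theta_p(v)=\epsilon\,\omega_n$ because $\mu_x$ has total mass one. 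Your alternative of estimating the $\theta_x$-measure of directions at the moving point $x$ would force you to compare visibility measures at varying basepoints and is strictly harder than this fixed-basepoint kernel estimate; to complete your proof, you should prove the cone estimate on Busemann functions (or an equivalent uniform Gromov-product bound) as your announced lemma.
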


A related result in the setting of harmonic manifolds can be found in
Zimmer \cite[Thm. 1]{Zi2}. Moreover, the solution of the Dirichlet problem at
infinity for general nonpositively curved rank one manifolds admitting
compact quotients was shown by Ballmann \cite{Ba}.
 
In Section 8 we consider eigenfunctions $\Delta f + \lambda f = 0$,
$\lambda \in {\mathbb R} \backslash \{0\}$ on rank one asymptotically
harmonic manifolds $X$ satisfying \eqref{eq:curvboundcond}. We show
that if such an eigenfunction $f \in C^\infty(X,{\mathbb C})$ has a
continuous extension to the boundary $X(\infty)$, then the extension
must be necessarily trivial, in contrast to Theorem
\ref{thm:dirichprob} for harmonic functions. The proof is based on
taking horospherical means. Since horospheres $\mathcal H$ are
noncompact, the averages have to be taken via {\em compact
  exhaustions} $\{ K_j \}$ with smooth boundaries $\partial K_j$. We
first observe in Section 8 (see Theorem \ref{thm:meanvalprop0})
that, for continuous functions $f: \overline{X} = X \cup X(\infty) \to
{\mathbb R}$ and horospheres $\mathcal{H}$ centered at $\xi \in
X(\infty)$ with compact exhaustion $\{ K_j \}$, we have
\begin{equation} \label{eq:horosphmean} \lim\limits_{j \to \infty}
  \frac{\int_{K_j} f(x) dx}{\vol_{n-1}(K_j)} = f(\xi).
\end{equation}
The expression \eqref{eq:horosphmean} is called {\em the horospherical
  mean of $f$} with respect to the exhaustion $\{ K_j \}$. In Section
7, we prove that all horospheres in these spaces have polynomial
volume growth, which implies that they admit {\em (compact)
  isoperimetric exhaustions} $\{ K_j \}$, that is,
\begin{equation} \label{eq:isopexh}
  \frac{\vol_{n-2}(\partial K_j)}{\vol_{n-1}(K_j)} \to 0 \quad
  \text{as}\; j \to \infty.
\end{equation}
The main result of Section 8 is that, for all $\lambda
\in {\mathbb R}\backslash \{0\}$, the horospherical means (with
respect to isoperimetric exhaustions) of bounded eigenfunctions are
zero.

\begin{theorem} \label{thm:horomeanbdeigfunc} Let $(X,g)$ be a rank
  one asymptotically harmonic manifold of dimension $n$ satisfying
  \eqref{eq:curvboundcond} and $h > 0$ be the mean curvature of all
  horospheres. Let $\lambda \neq 0$ be a real number and $f \in
  C^\infty(X)$ be a bounded function satisfying $\Delta f
  + \lambda f = 0$ and $\mathcal{H} \subset X$ be a horosphere with
  {\em isoperimetric exhaustion} $\{ K_j \}$. Then we have
  \begin{equation} \label{eq:horosphmean2} \lim\limits_{j \to \infty}
    \frac{\int_{K_j} f(x) dx}{\vol_{n-1}(K_j)} = 0.
  \end{equation}
\end{theorem}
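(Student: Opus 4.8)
The plan is to reduce the horospherical integral to a single second-order linear ODE in the normal direction and then to control that ODE using only the boundedness of $f$. Write $\xi \in X(\infty)$ for the centre of $\mathcal{H}$, let $b = b_{q,\xi}$ be the associated Busemann function, and let $N = \nabla b$, so that $N$ is a unit field, its integral curves are geodesics ($\nabla_N N = 0$), and the horospheres are the level sets of $b$. The first step is to record the splitting of the Laplacian along this foliation: since $N$ is geodesic,
$$\Delta f = N^2 f + (\Delta b)\, Nf + \Delta_{\mathcal{H}_s} f,$$
where $\Delta_{\mathcal{H}_s}$ is the intrinsic Laplacian of the horosphere through the base point and $\Delta b$ is its mean curvature. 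Here asymptotic harmonicity enters decisively: after orienting $N$ so that the $(n-1)$-volume expands along the flow, $\Delta b \equiv h$ is constant, and hence the time-$s$ normal flow $\Phi_s$ scales the induced $(n-1)$-volume \emph{exactly} by $e^{hs}$, i.e. $dA_s = e^{hs}\, dA_0$ for all $s \in \R$.

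Next I would introduce $G(s) = \int_{\Phi_s(K_j)} f\, dA_s$ and $B(s) = \int_{\Phi_s(K_j)} Nf\, dA_s$. Using $dA_s = e^{hs}\, dA_0$ and differentiating under the integral sign gives $G' = hG + B$, while differentiating $B$ and substituting $N^2 f = -\lambda f - h\, Nf - \Delta_{\mathcal{H}_s} f$ (from $\Delta f = -\lambda f$) gives $B' = -\lambda G - E$, where
$$E(s) = \int_{\Phi_s(K_j)} \Delta_{\mathcal{H}_s} f\, dA_s = \int_{\partial \Phi_s(K_j)} \partial_\nu f\, d\sigma_s$$
is a pure boundary term by the divergence theorem on $\mathcal{H}_s$. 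Eliminating $B$ yields the scalar equation
$$G'' - hG' + \lambda G = -E.$$
I would then record two a priori bounds. Writing $M = \|f\|_\infty$ and $V_j = \vol_{n-1}(K_j)$, boundedness gives $|G(s)| \le M e^{hs} V_j$ for all $s \in \R$. Second, a standard interior gradient estimate $|\nabla f| \le C$ (valid under the bounded geometry furnished by \eqref{eq:curvboundcond}) together with uniform boundedness of the second fundamental forms of the horospheres (trace $h$, principal curvatures in $[0,h]$) gives $|E(s)| \le C\, \vol_{n-2}(\partial \Phi_s(K_j)) \le C\, e^{\beta |s|}\, \vol_{n-2}(\partial K_j)$ with $\beta \le h$, and moreover $E(s)$ decays as $s \to -\infty$ because the horospheres contract towards $\xi$.

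Finally I would extract $G(0)$. The characteristic roots of $\mu^2 - h\mu + \lambda = 0$ are $\mu_\pm = \tfrac{1}{2}(h \pm \sqrt{h^2 - 4\lambda})$; for $\lambda > 0$ both have real part strictly less than $h$, while for $\lambda < 0$ one root exceeds $h$ and the other is negative. In each case the two-sided weight $|G(s)| \le M e^{hs} V_j$ is incompatible with any nonzero homogeneous mode: when $\lambda > 0$ the bound as $s \to -\infty$ forces decay faster than the slowest homogeneous solution (killing both modes), and when $\lambda < 0$ the bound eliminates the negative mode at $-\infty$ and the super-$h$ mode at $+\infty$. Hence $G$ is the unique solution in the weighted class $\{\,|G(s)| \le \mathrm{const}\cdot e^{hs}\,\}$, produced from the forcing alone by the corresponding bounded Green's operator, so that $|G(0)| \le C \int |\mathcal{G}(0,\tau)|\, e^{\beta|\tau|}\, d\tau \cdot \vol_{n-2}(\partial K_j) \le C' \vol_{n-2}(\partial K_j)$. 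Dividing by $V_j$ and invoking the isoperimetric condition \eqref{eq:isopexh} then yields \eqref{eq:horosphmean2}. The main obstacle I expect lies precisely in this last step: making the ODE extraction rigorous uniformly across the sign of $\lambda$ and the regimes $h^2 \gtrless 4\lambda$, and in particular guaranteeing convergence of the weighted Green's-function integral. This hinges on the growth rate $\beta$ of the boundary term being dominated by the relevant decay rates of $\mathcal{G}$, which is where the estimate $\beta \le h$ (from positivity and trace $h$ of the second fundamental forms of horospheres) together with the contraction of $E(s)$ towards $\xi$ becomes essential.
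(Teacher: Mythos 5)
Your reduction is, up to normalization, exactly the paper's: the paper works with the normalized means $g_j(t)=\frac{1}{\vol_{n-1}(K_j(t))}\int_{K_j(t)}f$, which in your notation is $g_j(t)=e^{-ht}G(t)/V_j$, and derives $g_j''+hg_j'+\lambda g_j=\frac{1}{\vol_{n-1}(K_j(t))}\int_{\partial K_j(t)}\langle \grad_{\mathcal{H}_t}f,\nu\rangle$, which is equivalent to your $G''-hG'+\lambda G=-E$; your two-sided weighted bound $|G(s)|\le Me^{hs}V_j$ is the paper's $\Vert g_j\Vert_\infty\le\Vert f\Vert_\infty$, and your root analysis of $\mu^2-h\mu+\lambda=0$ is the paper's observation that every nonzero solution of $g''+hg'+\lambda g=0$ is unbounded on $\R$ when $\lambda\neq0$. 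The divergence is in the endgame, and the obstacle you yourself flagged there is a genuine gap, in two respects. First, your estimate $\beta\le h$ for the growth of $\vol_{n-2}(\partial\Phi_s(K_j))$ rests on the claim that the second fundamental forms of horospheres have eigenvalues in $[0,h]$, i.e. $U(v)\ge 0$. Under the standing hypotheses (no conjugate points, rank one, \eqref{eq:curvboundcond}) convexity of horoballs is not known --- it would follow from an assumption like no focal points, which is not made --- and the bound actually available is only $\vol_{n-2}(\partial K_j(t))\le e^{C|t|}\vol_{n-2}(\partial K_j)$ with a constant $C$ of the order $(n-2)\sqrt{R_0}$, typically far larger than $h$. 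Second, even granting exponential decay of $E$ at $-\infty$ (at a rate $\kappa$ tied to the stable contraction rate $\rho/2$ of \cite[Cor.\ 2.6]{KnPe2}), the Green's-operator extraction does not close in all regimes: for $\lambda>0$, if $\kappa<\mathrm{Re}\,\mu_-$, the particular solution built from $E$ decays at $-\infty$ only like $e^{\kappa s}$, hence fails to lie in your uniqueness class $O(e^{hs})$; then $G-G_p$ is merely $O(e^{\min(h,\kappa)s})$ at $-\infty$, the homogeneous modes (with $\mathrm{Re}\,\mu_\pm\le h$) are no longer excluded, and $|G(0)|\lesssim\vol_{n-2}(\partial K_j)$ does not follow. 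For $\lambda<0$ with $|\lambda|$ small, $\mu_+$ is only barely above $h$, so once $\beta\le h$ is lost the kernel integral at $+\infty$ diverges outright. Since $\rho$, $\lambda$, $h$, $R_0$ are unrelated, no choice of weights rescues the argument uniformly.

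The paper avoids this quantitative per-$j$ step entirely, and you can repair your write-up the same way: argue by contradiction assuming the means tend to $c\neq 0$; use Yau's gradient estimate \cite[Theorem 3]{Yau} to make the family $g_j$ equicontinuous and extract a pointwise subsequential limit $g$ with $g(0)=c$ and $\Vert g\Vert_\infty\le\Vert f\Vert_\infty$; verify $g''+hg'+\lambda g=0$ only \emph{distributionally}, testing against $\psi\in C_0^\infty(\R)$. Because the boundary term is then integrated only over the compact set $\supp\psi$, the crude bound $e^{C|t|}$ (any $C$, coming from the lower sectional curvature bound) suffices: the isoperimetric hypothesis \eqref{eq:isopexh} makes $\vol_{n-2}(\partial K_j(t))/\vol_{n-1}(K_j(t))\to 0$ uniformly on compact $t$-intervals, so the forcing disappears in the limit. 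Elliptic regularity of the ODE then gives a classical bounded solution, and your (correct) mode analysis yields $g\equiv 0$, contradicting $g(0)=c\neq 0$. This limit argument needs no global-in-$s$ control of $E$ and no convexity of horospheres --- precisely the two points where your Green's-function route breaks down.
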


This result leads to the following above mentioned fact,
complementing Theorem \ref{thm:dirichprob}.

\begin{theorem} \label{thm:dirichprob-eigen} Let $(X,g)$ be a rank one
  asymptotically harmonic manifold satisfying
  \eqref{eq:curvboundcond}. Let $\lambda \in {\mathbb R} \backslash
  \{0\}$ and $f \in C^\infty(X)$ be an eigenfunction $\Delta f +
  \lambda f = 0$. If $f$ has a continuous extension $F \in
  C(\overline{X})$ then we have necessarily $F \vert_{X(\infty)}
  \equiv 0$.
\end{theorem}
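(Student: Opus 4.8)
The plan is to deduce Theorem~\ref{thm:dirichprob-eigen} from Theorem~\ref{thm:horomeanbdeigfunc} together with the horospherical mean value property~\eqref{eq:horosphmean} from Theorem~\ref{thm:meanvalprop0}. Suppose $f \in C^\infty(X)$ is an eigenfunction with $\Delta f + \lambda f = 0$, $\lambda \neq 0$, and that $f$ extends continuously to $F \in C(\overline{X})$. First I would observe that since $\overline{X}$ is compact (being the geometric compactification, homeomorphic to a closed ball via the maps $\bar\varphi_p$) and $F$ is continuous, $F$ is bounded; in particular $f = F\vert_X$ is a bounded function on $X$. This is exactly the boundedness hypothesis needed to apply Theorem~\ref{thm:horomeanbdeigfunc}.

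Next, fix an arbitrary boundary point $\xi \in X(\infty)$ and choose a horosphere $\mathcal{H}$ centered at $\xi$. By the polynomial volume growth of horospheres established in Section~7, $\mathcal{H}$ admits an isoperimetric exhaustion $\{K_j\}$ satisfying~\eqref{eq:isopexh}. Now I would compute the horospherical mean $\lim_{j \to \infty} \frac{\int_{K_j} f(x)\,dx}{\vol_{n-1}(K_j)}$ in two different ways. On the one hand, applying the horospherical mean value property~\eqref{eq:horosphmean} to the continuous function $F \in C(\overline{X})$ (whose restriction to $X$ is $f$) gives that this limit equals $F(\xi)$. On the other hand, since $f$ is a bounded eigenfunction and $\{K_j\}$ is an isoperimetric exhaustion, Theorem~\ref{thm:horomeanbdeigfunc} gives that the same limit equals $0$. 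Comparing the two evaluations yields $F(\xi) = 0$.

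Since $\xi \in X(\infty)$ was arbitrary, this shows $F\vert_{X(\infty)} \equiv 0$, which is the assertion. The key point is that the two theorems compute the \emph{same} quantity, and this forces the boundary values to vanish.

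I do not expect any serious obstacle here, as the theorem is essentially a formal corollary combining the two main analytic results. The one technical point to verify is that the hypotheses of both ingredient theorems are genuinely met: Theorem~\ref{thm:meanvalprop0} requires $F$ to be continuous on all of $\overline{X}$, which holds by assumption, while Theorem~\ref{thm:horomeanbdeigfunc} requires $f$ to be bounded, which follows from the compactness of $\overline{X}$ as noted above. One should also confirm that the mean value property~\eqref{eq:horosphmean} is valid for the \emph{isoperimetric} exhaustion being used (Theorem~\ref{thm:meanvalprop0} as stated allows an arbitrary compact exhaustion, so this is automatic). A minor subtlety worth a remark is that Theorem~\ref{thm:horomeanbdeigfunc} is stated under the hypothesis $h > 0$; since the rank one assumption together with the equivalences recalled from~\cite{KnPe2} forces purely exponential volume growth with rate $h_{vol} = h$, the degenerate case $h = 0$ does not arise for the manifolds under consideration, so the application is legitimate.
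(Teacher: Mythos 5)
Your proposal is correct and follows essentially the same route as the paper's own proof: both evaluate the horospherical mean over an isoperimetric exhaustion in two ways, via Theorem \ref{thm:meanvalprop0} (giving $F(\xi)$, with the hypotheses supplied by Proposition \ref{prop:horoprops}) and via Theorem \ref{thm:horomeanbdeigfunc} (giving $0$), using the polynomial volume growth from Section 7 to guarantee isoperimetric exhaustions exist. Your added checks — boundedness of $f$ from compactness of $\overline{X}$ and the validity of $h>0$ in the rank one setting — are correct and only make explicit what the paper leaves implicit.
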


{\bf Acknowledgement:} The authors would like to thank Evangelia Samiou for
bringing the references  \cite{ItSa1, ItSa2} to their attention.

\section{Uniform divergence of geodesics}
\label{chap:unifdifgeod}

In this section, we prove that for every distance $d > 0$ and any
angle $\alpha > 0$ there exists a $t_0 > 0$, such that any two unit
speed geodesics $c_1,c_2$ starting at the same point and differing by
an angle $\ge \alpha$ will diverge uniformly in the sense that
$d(c_1(t),c_2(t)) \ge d$ for all $t \ge t_0$. For the proof, we start
with the following lemma.

\begin{lemma} \label{lem:jacrep} Let $(X,g)$ be a manifold without
  conjugate points and, for $v \in SX$, let $A_v$ be the orthogonal
  Jacobi tensor along $c_v$ satisying $A_v(0) = 0$ and $A_v'(0) = {\rm
    id}$. Then we have
  \begin{itemize}
  \item[(i)] $A_v(t) = U_v(t) \int\limits_0^t (U_v^* U_v)^{-1}(u)
    du$,
  \item[(ii)] $( U_v'(0) -S_{v,t}'(0))^{-1} = \int\limits_0^t
    (U_v^* U_v)^{-1}(u) du$.
  \end{itemize}
\end{lemma}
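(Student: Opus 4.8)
The plan is to exploit the fact that for two orthogonal Jacobi tensors $J_1, J_2$ along $c_v$ the Wronskian $W(J_1,J_2) = J_1^* J_2' - (J_1')^* J_2$ is constant in $t$; this follows from the Jacobi equation $J'' + R_v J = 0$ together with the self-adjointness of the curvature operator $R_v(t) = R(\cdot, c_v'(t))c_v'(t)$, since $W' = J_1^*(R_v^* - R_v) J_2 = 0$. Two preliminary observations drive the argument. First, since $(X,g)$ has no conjugate points, the unstable tensor $U_v$ is non-singular for all $t$ (see \cite{Kn1}); this lets us both divide by $U_v$ and form $(U_v^* U_v)^{-1}$. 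Second, $U_v^* U_v'$ is symmetric, equivalently $W(U_v, U_v) \equiv 0$. To see this, note that each approximating tensor $U_{v,r}$ vanishes at $-r$, so evaluating its (constant) self-Wronskian there gives $W(U_{v,r}, U_{v,r}) \equiv 0$; reading this at $t = 0$ shows that $U_{v,r}'(0)$ is symmetric, and passing to the limit $r \to \infty$ shows that $U_v'(0)$ is symmetric. Hence $W(U_v,U_v)$, being constant and vanishing at $t=0$, is identically zero.

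For part (i), I would look for the solution in the form $A_v(t) = U_v(t) C(t)$. Substituting into the Jacobi equation $A_v'' + R_v A_v = 0$ and cancelling the terms $(U_v'' + R_v U_v)C = 0$ leaves $2 U_v' C' + U_v C'' = 0$. Multiplying by $U_v^*$ and writing $P = U_v^* U_v$, the symmetry of $U_v^* U_v'$ gives $P' = 2 U_v^* U_v'$, so the equation becomes $(P C')' = 0$. Thus $P C'$ is constant; the initial conditions $A_v(0) = 0$ and $A_v'(0) = {\rm id}$ force $C(0) = 0$ and $C'(0) = {\rm id}$, whence $P C' \equiv {\rm id}$ and $C'(t) = (U_v^* U_v)^{-1}(t)$. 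Integrating with $C(0) = 0$ yields the claimed formula.

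For part (ii), the key identity is that $U_v - S_{v,t}$ is the orthogonal Jacobi tensor with initial value $0$ at $s = 0$ (since $U_v(0) = S_{v,t}(0) = {\rm id}$) and initial derivative $U_v'(0) - S_{v,t}'(0)$. By uniqueness of solutions of the tensorial Jacobi equation this gives $U_v - S_{v,t} = A_v \cdot (U_v'(0) - S_{v,t}'(0))$ along all of $c_v$. Evaluating at $s = t$ and using $S_{v,t}(t) = 0$ gives $U_v(t) = A_v(t)(U_v'(0) - S_{v,t}'(0))$. Substituting the expression for $A_v(t)$ from part (i) and cancelling the invertible factor $U_v(t)$ on the left yields $\left(\int_0^t (U_v^* U_v)^{-1}(u)\, du\right)(U_v'(0) - S_{v,t}'(0)) = {\rm id}$; since the integral of the positive-definite integrand $(U_v^* U_v)^{-1}$ is itself invertible, this is precisely the asserted identity (ii).

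The main subtlety is in establishing the symmetry of $U_v^* U_v'$, because it rests on the limiting construction of $U_v$ from the finite tensors $U_{v,r}$ and on their non-singularity; one must ensure that the convergence $U_{v,r}'(0) \to U_v'(0)$ is strong enough to preserve symmetry, and that all the inverses appearing (both $U_v(t)^{-1}$ and $(U_v^* U_v)^{-1}$) are well-defined for all $t$, which is exactly where the absence of conjugate points is essential. The remaining steps are routine manipulations with Jacobi tensors.
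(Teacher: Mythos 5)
Your proof is correct and takes essentially the same route as the paper: the paper simply cites \cite[Prop.~2.1]{Kn2} for the representation $A_v(t) = U_v(t)\bigl(\int_0^t (U_v^*U_v)^{-1}(u)\,du\; C_1 + C_2\bigr)$ (valid because $U_v$ is non-singular and Lagrangian) and determines $C_1 = \mathrm{id}$, $C_2 = 0$ from the initial conditions, while part (ii) is quoted from \cite[Lem.~2.3]{Kn2}. You merely unpack these citations in a self-contained way --- deriving the variation-of-parameters formula via the constancy of the Wronskian and the Lagrangian property of $U_v$ (obtained correctly as a limit from the tensors $U_{v,r}$), and proving (ii) through the uniqueness identity $U_v - S_{v,t} = A_v\,(U_v'(0)-S_{v,t}'(0))$ evaluated at $t$ --- so the underlying argument coincides with the paper's.
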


\begin{proof}
  Since the endomorphism $U_v(u)$ is non-singular and Lagrangian for
  all $u \in {\mathbb R}$, we conclude from \cite[Prop. 2.1]{Kn2} that
  $$ A_v(t) = U_v(t)\left( \int_0^t (U_v^*U_v)^{-1}(u)du \; C_1 + C_2 \right) $$
  with suitable constant tensors $C_1$ and $C_2$. Evaluating and
  differentiating this identity at $t=0$ yields $C_2 = 0$ and $C_1 =
  {\rm id}$, finishing the proof of (i). The statement (ii) can be found
  in \cite[Lemma 2.3]{Kn2}.
\end{proof}

\begin{prop} \label{prop:Avest}
  Let $(X,g)$ be a rank one asymptotically harmonic manifold
  satisfying \eqref{eq:curvboundcond}. Then there exist constants $a,
  \rho > 0$ such that
  $$
  \|A_v(t)\, x\| \ge a e^{\frac{\rho}{2}t} \| x \|
  $$
  for all $v \in SX$, $x \in v^\bot \subset TX$ and $t \ge 1$. 
\end{prop}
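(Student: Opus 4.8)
The plan is to lower-bound the growth of $A_v(t)$ by combining the explicit representation from Lemma \ref{lem:jacrep}(i) with the exponential growth of the unstable tensor $U_v$, which is available because rank one asymptotically harmonic manifolds satisfying \eqref{eq:curvboundcond} have Anosov geodesic flow. First I would recall that the Anosov property gives uniform constants $C \ge 1$ and $\rho > 0$ such that the unstable tensor satisfies a uniform lower exponential bound of the form $\|U_v(t)\, x\| \ge \frac{1}{C} e^{\rho t} \|x\|$ for all $t \ge 0$ (equivalently, that the stable tensor $S_v$ contracts like $e^{-\rho t}$, and since $U_v(t) = S_{-v}(-t)$ this transfers to expansion of $U_v$). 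The curvature bounds \eqref{eq:curvboundcond} are what make these constants uniform over all $v \in SX$ rather than merely pointwise.

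\medskip

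The key computation is then to estimate the integral $\int_0^t (U_v^* U_v)^{-1}(u)\, du$ appearing in Lemma \ref{lem:jacrep}(i) from below as a positive-definite endomorphism. Since $\|U_v(u)\, x\| \ge \frac{1}{C} e^{\rho u}\|x\|$, the inverse $U_v^{-1}(u)$ is bounded in operator norm by $C e^{-\rho u}$, so $(U_v^* U_v)^{-1}(u) = U_v^{-1}(u) (U_v^{-1}(u))^*$ satisfies a lower bound that I would need to control carefully; the cleaner route is to bound $A_v(t)\, x$ directly. Writing $A_v(t) = U_v(t) \int_0^t (U_v^*U_v)^{-1}(u)\, du$ and setting $y = \left(\int_0^t (U_v^*U_v)^{-1}(u)\, du\right) x$, I would estimate $\|A_v(t)\, x\| = \|U_v(t)\, y\| \ge \frac{1}{C} e^{\rho t}\|y\|$, and separately bound $\|y\|$ from below in terms of $\|x\|$ using the positive-definiteness of the integrated tensor. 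The quantitative link between these two estimates, and in particular getting the clean exponent $\frac{\rho}{2}$ in the final bound (rather than the naive $\rho$), is where I expect to spend the most care: one natural mechanism is that the integral $\int_0^t (U_v^*U_v)^{-1}(u)\, du$ converges, and its invertibility together with the $e^{\rho t}$ factor forces growth, but controlling the loss incurred when passing through the inverse tensor is the delicate point.

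\medskip

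An alternative and perhaps more transparent approach, which I would pursue if the direct estimate proves awkward, is to use Lemma \ref{lem:jacrep}(ii). That identity shows that the matrix $\left(U_v'(0) - S_{v,t}'(0)\right)^{-1}$ equals the integral $\int_0^t (U_v^*U_v)^{-1}(u)\, du$. Since $S_{v,t}'(0) \to S_v'(0) = S(v)$ as $t \to \infty$ and $U_v'(0) = U(v)$, the difference converges to $U(v) - S(v) = D(v)$, which by the rank one hypothesis is uniformly invertible on $v^\perp$ away from its (trivial) kernel direction. This gives uniform two-sided control on the integral factor, and feeding this into Lemma \ref{lem:jacrep}(i) together with the exponential growth of $U_v$ yields the desired estimate with uniform constants $a, \rho > 0$.

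\medskip

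The main obstacle, in either approach, is establishing \emph{uniformity} of the constants over all $v \in SX$: the Anosov property a priori gives a hyperbolic splitting, but extracting uniform expansion rates and uniform invertibility of $D(v)$ requires the compactness-type arguments underpinned by \eqref{eq:curvboundcond} (so that the relevant tensors and their limits vary continuously and are uniformly bounded). Once uniformity is secured, the passage to the stated inequality for $t \ge 1$ is a routine matter of absorbing the transient behaviour on $[0,1]$ into the constant $a$.
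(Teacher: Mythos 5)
Your overall architecture matches the paper's: the paper combines Lemma \ref{lem:jacrep}(i) and (ii) into $A_v(t) = U_v(t)\,\bigl(U_v'(0)-S_{v,t}'(0)\bigr)^{-1}$, uses the uniform expansion $\|U_v(t)y\| \ge \frac{1}{a_2}e^{\frac{\rho}{2}t}\|y\|$ (derived from $D(v)\ge\rho\,\mathrm{id}$, \cite[Thm.~1.3]{KnPe2}, together with the stable contraction of \cite[Prop.~2.5]{KnPe2} and the flip $U_v(t)=S_{-v}(-t)$; your appeal to the Anosov property is the same ingredient in disguise), and then controls the inverse factor. But there is a genuine gap at exactly the step you flag as delicate, and your proposed mechanism points the wrong way. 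You claim that uniform invertibility of $D(v)$ gives ``uniform two-sided control on the integral factor.'' It does not: since $\int_0^t (U_v^*U_v)^{-1}(u)\,du$ is increasing in $t$, its inverse $U_v'(0)-S_{v,t}'(0)$ decreases monotonically to $D(v)$, so $U_v'(0)-S_{v,t}'(0)\ge D(v)\ge\rho\,\mathrm{id}$ and invertibility of $D(v)$ only yields the \emph{upper} bound $\int_0^t (U_v^*U_v)^{-1}(u)\,du \le D(v)^{-1}$, which is useless for a lower estimate on $A_v(t)$. What is needed is the opposite side: a lower bound on the integral, equivalently a uniform upper bound on $\|U_v'(0)-S_{v,t}'(0)\|$. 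In the paper this comes not from rank one but from the curvature bounds \eqref{eq:curvboundcond}, namely $\|U_v'(0)\|\le\sqrt{R_0}$ and $\|S_{v,t}'(0)\| = \|A_v'(t)A_v^{-1}(t)\|\le \sqrt{R_0}\coth(\sqrt{R_0})$ for $t\ge 1$ (\cite[Lem.~2.2]{KnPe2}); substituting $x=(U_v'(0)-S_{v,t}'(0))y$ then gives $\|A_v(t)x\| \ge \frac{1}{a_2\,\Vert U_v'(0)-S_{v,t}'(0)\Vert}\,e^{\frac{\rho}{2}t}\|x\|$. This is also the only place the hypothesis $t\ge 1$ enters, a restriction your sketch never accounts for --- and some such restriction is forced, since $\int_0^t(U_v^*U_v)^{-1}(u)\,du \to 0$ as $t\to 0$, so no uniform lower bound can hold near $t=0$.

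Two smaller points. In your first route, ``positive-definiteness of the integrated tensor'' gives no uniform bound by itself; the repair is again a curvature-based upper bound on $\|U_v(u)\|$ for $u\in[0,1]$, yielding $\int_0^t (U_v^*U_v)^{-1}(u)\,du \ge \int_0^1 (U_v^*U_v)^{-1}(u)\,du \ge c\,\mathrm{id}$ for $t\ge 1$, which is morally the same step as the paper's. And your worry about degrading the exponent from $\rho$ to $\frac{\rho}{2}$ when passing through the inverse tensor is a red herring: no halving occurs at that step; the $\frac{\rho}{2}$ is simply the contraction rate that \cite[Prop.~2.5]{KnPe2} delivers in terms of the lower bound $\rho$ on $D(v)$.
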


\begin{proof}
  We conclude from \cite[Thm. 1.3]{KnPe2} that there exists $\rho > 0$
  such that $D(v) = U(v) - S(v) \ge \rho \cdot {\rm id}$. Using
  \cite[Prop. 2.5]{KnPe2} and the fact that $S_v(t)$ is non-singular
  for $t \ge 0$, we conclude that there exists $a_2 > 0$ such that
  $\Vert S_v^{-1}(t) y \Vert \ge \frac{1}{a_2} e^{\frac{\rho}{2}t} \Vert y
  \Vert$ for all $y \in (\Phi^tv)^\bot$, where $\Phi^t: SX \to SX$
  denotes the geodesic flow. Using
  $S_{\Phi^t w}(u) y_u = S_w(u+t)(S_w^{-1}(t)y)_u$ (where $y_u$ is the
  parallel translation of $y \in (\Phi^t w)^\bot$ along $c_v$) with $u=-t$
  and $-v=\Phi^tw$ yields
  $$ \Vert U_v(t) y \Vert =  \Vert S_{-v}(-t)y \Vert = 
  \Vert S_{-\Phi^t v}^{-1}(t) y_t \Vert \ge 
  \frac{1}{a_2} e^{\frac{\rho}{2}t} \Vert y \Vert. $$ 
  Lemma \ref{lem:jacrep} yields
  $$ \Vert A_v(t) (U_v'(0) - S_{v,t}'(0)) y \Vert = 
  \Vert U_v(t) y \Vert \ge \frac{1}{a_2} e^{\frac{\rho}{2}t} \Vert y \Vert, $$ 
  i.e.,
  $$ \Vert A_v(t) x \Vert \ge \frac{1}{a_2 \Vert U_v'(0) - S_{v,t}'(0) \Vert} 
  e^{\frac{\rho}{2}t} \Vert x \Vert \ge \frac{1}{a_2(\Vert U_v'(0) \Vert + \Vert
    S_{v,t}'(0) \Vert)} e^{\frac{\rho}{2}t} \Vert x \Vert. $$
  The proposition follows now from $\Vert U_v'(0) \Vert \le \sqrt{R_0}$ and
  $$ \Vert S_{v,t}'(0) \Vert = \Vert A_v'(t)A_v^{-1}(t) \Vert \le \sqrt{R_0}
  \coth(\sqrt{R_0}), $$
  which can be found in \cite[Lem. 2.2]{KnPe2}
\end{proof}
  
Using this we derive the uniform divergence of geodesics described above.

\begin{cor}\label{cor:uniformdiv}
  Let $c_v : [0, \infty) \to X$ and $c_w : [0, \infty) \to X$ be two
  geodesics with $v,w \in S_pX$. Then
  $$
  d(c_v(t), c_w(t)) \ge a(t)  \angle(v,w)
  $$
  where $a: [0,\infty) \to [0, \infty)$ is a function (not depending
  on $p \in X$) with $\lim\limits_{t \to \infty} a(t) = \infty$.
\end{cor}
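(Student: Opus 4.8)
The plan is to bound $d(c_v(t),c_w(t))$ from below by taking the \emph{minimizing} geodesic joining the two points, pulling it back to $T_pX$ via $\exp_p^{-1}$, and estimating its length through the angular contribution, which is exactly the quantity controlled by Proposition~\ref{prop:Avest}. Since $X$ is simply connected without conjugate points, $\exp_p$ is a diffeomorphism and every geodesic from $p$ is globally minimizing, so $d(p,\exp_p(x))=\|x\|$. Let $\gamma:[0,\ell]\to X$ be a unit-speed minimizing geodesic from $c_v(t)$ to $c_w(t)$, so that $\ell=d(c_v(t),c_w(t))$, and write its pullback in polar form as $\exp_p^{-1}(\gamma(s))=r(s)\,u(s)$ with $r(s)=d(p,\gamma(s))$ and $u(s)\in S_pX$. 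Then $u(0)=v$ and $u(\ell)=w$, so $u$ is a path in $S_pX$ joining $v$ to $w$ and hence $\int_0^\ell\|u'(s)\|\,ds\ge\angle(v,w)$.

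The key computation is the polar decomposition of $\|\gamma'\|$. Differentiating $\gamma(s)=\exp_p(r(s)u(s))$ and using that the angular variation field $d(\exp_p)_{ru}(r\,u')$ is precisely the Jacobi field $A_{u}(r)u'$ (with $A_u$ the tensor of Lemma~\ref{lem:jacrep}), while the radial part $d(\exp_p)_{ru}(r'u)=r'\,c_u'(r)$ is $r'$ times a unit radial vector, the Gauss Lemma yields orthogonality of the two parts. Thus $\|\gamma'(s)\|^2=r'(s)^2+\|A_{u(s)}(r(s))\,u'(s)\|^2$, so that simultaneously $\|\gamma'(s)\|\ge|r'(s)|$ and $\|\gamma'(s)\|\ge\|A_{u(s)}(r(s))\,u'(s)\|$.

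From here I would split according to how close $\gamma$ comes to $p$, say for $t\ge 2$. Let $r_{\min}:=\min_s r(s)$. If $r_{\min}\le t/2$, then, since both endpoints lie at radius $t$, the radial estimate gives $\ell\ge\int_0^\ell|r'|\,ds\ge 2(t-r_{\min})\ge t$, and using $\angle(v,w)\le\pi$ we obtain $\ell\ge (t/\pi)\,\angle(v,w)$. Otherwise $r(s)\ge t/2\ge 1$ throughout, so Proposition~\ref{prop:Avest} applies and gives $\|A_{u(s)}(r(s))u'(s)\|\ge a\,e^{\rho r(s)/2}\|u'(s)\|\ge a\,e^{\rho t/4}\|u'(s)\|$; integrating the angular estimate then yields $\ell\ge a\,e^{\rho t/4}\int_0^\ell\|u'\|\,ds\ge a\,e^{\rho t/4}\,\angle(v,w)$. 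Setting $a(t)=\min\{t/\pi,\;a\,e^{\rho t/4}\}$ for $t\ge 2$ (and $a(t)=0$ for $t<2$) gives the claimed inequality with a function independent of $p$ satisfying $a(t)\to\infty$.

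The main obstacle is precisely the \emph{lower}-bound direction. The naive attempt to connect $c_v(t)$ to $c_w(t)$ by the image under $\exp_p$ of the great-circle arc from $v$ to $w$ only produces an \emph{upper} bound for the distance, so it is of no use here. Working instead with the minimizing geodesic forces one to control its possible excursion toward $p$, where Proposition~\ref{prop:Avest} is unavailable (it requires $r\ge 1$) and the exponential weight degenerates; the case analysis above is exactly what converts this difficulty into the harmless linear bound $\ell\ge t$.
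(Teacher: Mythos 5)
Your proof is correct and takes essentially the same route as the paper's: a minimizing geodesic from $c_v(t)$ to $c_w(t)$ written in polar coordinates about $p$, the Gauss-lemma splitting $\Vert \gamma'\Vert^2 = (r')^2 + \Vert A_{u}(r)\,u'\Vert^2$, a dichotomy on whether the geodesic dips below radius $t/2$ (giving the linear bound $\ell \ge t \ge (t/\pi)\angle(v,w)$) versus applying Proposition~\ref{prop:Avest} (giving $a\,e^{\rho t/4}\angle(v,w)$), and the identical final choice $a(t) = \min\{t/\pi,\, a\,e^{\rho t/4}\}$. If anything, your explicit restriction to $t \ge 2$ so that $r(s) \ge t/2 \ge 1$ is slightly more careful than the paper, which invokes the proposition without flagging its hypothesis $t \ge 1$.
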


\begin{proof}
 Let $c: [0,1] \to X$ be a geodesic connecting $c_v(t)$ with
  $c_w(t)$. Then $c$ is given by
  $$
  c(s) = \exp_p r(s) v(s),
  $$
  where $v(s) \in S_p X$ and $r(s) > 0$ for all $0 \leq s \leq 1$, and
  $v(0) = v$, $v(1) =w$ and $r(0) = r(1) = t$. Then
  \begin{eqnarray*}
  c'(s_0) &=& D \exp_p (
  r(s_0) v(s_0) )(r'(s_0) v(s_0) +  r(s_0) v'(s_0))\\
  &=& r'(s_0) c_{v(s_0)}' (r(s_0)) + A_{v(s_0)} ( r(s_0)) (v'(s_0)).
  \end{eqnarray*}
  Since $c_{v(s_0)}' (r(s_0)) \perp A_{v(s_0)} (r(s_0))
  (v'(s_0))$, we obtain
  \begin{eqnarray*}
    \left \| c'(s_0) \right \|^2 &= &
    (r'(s_0))^2 + ||A_{v(s_0)} ( r(s_0)) v'(s_0)||^2\\
    & \geq& ||A_{v(s_0)} ( r(s_0)) v'(s_0)||^2.
  \end{eqnarray*}
  If there exists $s_0 \in [0, 1]$ such that $r(s_0) \le \frac{t}{2}$
  then using the triangle inequality we have $d(c_v(t), c_w(t)) \ge
  t \ge (t/\pi) \angle(v,w)$. If this is not the case, we obtain for all $t > 0$
  \begin{eqnarray*}
    d(c_v(t), c_w(t)) = {\rm length}(c) &\ge& \int\limits_0^1 ||A_{v(s)} ( r(s)) v'(s)|| ds \\
    &\ge& a e^{\frac{\rho}{4}t} \int\limits_0^1||v'(s)|| ds\\
    &\ge& a e^{\frac{\rho}{4}t} \angle(v,w).
  \end{eqnarray*}
  The corollary follows now with the choice
  $$ a(t) = \min \left\{ \frac{t}{\pi}, a e^{\frac{\rho}{4}t} \right\}. $$ 
\end{proof}

\begin{rmk}
  Note that the proof shows that the function $a(t)$ describing the
  divergence of geodesics has at least linear growth.
\end{rmk}

\section{The geometric compactification}
\label{sec:geomcomp}

Let $(X,g)$ be a rank one asymptotically harmonic manifold satisfying
\eqref{eq:curvboundcond}. The {\em geometric boundary} $X(\infty)$ is
the set of equivalence classes of asymptotic geodesic rays. Two
geodesic rays $c_1, c_2: [0,\infty) \to X$ are called {\em
  asymptotic}, if there exists $C > 0$ with $d(c_1(t),c_2(t)) \le C$
for all $t \ge 0$. The equivalence class of a geodesic ray $c$ is
denoted by $c(\infty)$.

Let $p \in X$ and consider the map $\varphi_p: S_pX \to X(\infty)$
with $\varphi_p(v) = c_v(\infty)$. Uniform divergence of geodesics
implies that $\varphi_p$ is injective. Our next aim is to prove
surjectivity of $\varphi_p$. For this, we first prove general results
which will also be useful later on. The first result requires besides no conjugate points only a lower bound
on the sectional curvature of $X$.

\begin{prop} \label{prop:inexdist} Let $p,q_0,p_0 \in X$ be three
  different points such that $1 < r = d(p,p_0)=d(q_0,p_0)$ and
  $d(p,q_0) < r-1$. Let $\beta$ be the radial projection (from $p_0$) of
  the geodesic connecting $p$ and $q_0$ into $S_r(p_0)$.  Then there is a function $b:   \R \to (0,\infty)$ such that
  $$ 
  d_{S_r(p_0)}(p,q_0) \le {\rm length}(\beta) \le 
  \left(\max_{\vert s \vert \le r} b(s)\right) d(p,q_0), 
  $$ 
  where $d_{S_r(p_0)}$ is the intrinsic distance of the sphere
  $S_r(p_0)$.
\end{prop}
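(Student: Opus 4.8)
The plan is to prove the two inequalities separately; the left one is immediate and the right one reduces to a Jacobi tensor estimate of the same type already used in Corollary \ref{cor:uniformdiv}. Throughout, let $\gamma:[0,\ell]\to X$ be the unit speed geodesic from $p$ to $q_0$, where $\ell=d(p,q_0)$. By construction $\beta$ is a curve contained in $S_r(p_0)$ with $\beta(0)=p$ and $\beta(\ell)=q_0$ (using $\rho(0)=\rho(\ell)=r$), so its length is at least the intrinsic distance $d_{S_r(p_0)}(p,q_0)$, which is the infimum of lengths of such curves. This proves the lower estimate.

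For the upper estimate I would write $\gamma(s)=\exp_{p_0}(\rho(s)v(s))$ with $v(s)\in S_{p_0}X$ and $\rho(s)=d(p_0,\gamma(s))$, so that $\beta(s)=\exp_{p_0}(r\,v(s))$. Differentiating and decomposing into radial and orthogonal parts exactly as in the proof of Corollary \ref{cor:uniformdiv} gives
$$ \gamma'(s)=\rho'(s)\,c_{v(s)}'(\rho(s))+A_{v(s)}(\rho(s))v'(s),\qquad \beta'(s)=A_{v(s)}(r)v'(s), $$
the radial term being absent for $\beta$ since its radius is constant. As the two summands for $\gamma'$ are orthogonal and $\gamma$ has unit speed, we obtain the pointwise bound $\|A_{v(s)}(\rho(s))v'(s)\|\le 1$, and the problem is reduced to estimating $\|A_{v(s)}(r)v'(s)\|$ against it. The hypothesis $\ell<r-1$ guarantees, via the triangle inequality, that $\rho(s)\ge r-\ell>1$ for all $s$, so every radius occurring stays above $1$.

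The key step is then a comparison between $\|A_v(r)w\|$ and $\|A_v(\rho)w\|$ for fixed $w\perp v$ and $r,\rho>1$. Writing $J(t)=A_v(t)w$, one has
$$ \frac{d}{dt}\log\|J(t)\|=\frac{\langle A_v'(t)A_v(t)^{-1}J(t),J(t)\rangle}{\|J(t)\|^2}, $$
and $A_v'(t)A_v(t)^{-1}$ is the symmetric second fundamental form of the geodesic sphere $S_t(p_0)$ at $c_v(t)$. The operator norm bound $\|A_v'(t)A_v(t)^{-1}\|\le \sqrt{R_0}\coth(\sqrt{R_0}\,t)$ already invoked in the proof of Proposition \ref{prop:Avest} (from \cite[Lem. 2.2]{KnPe2}) is two-sided, and since $\coth$ is decreasing it gives, for all $t\ge 1$, the uniform estimate $\bigl|\tfrac{d}{dt}\log\|J(t)\|\bigr|\le C_0$ with $C_0=\sqrt{R_0}\coth(\sqrt{R_0})$. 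Integrating from $\rho(s)$ to $r$ (both $\ge 1$) yields
$$ \|A_{v(s)}(r)v'(s)\|\le e^{C_0|r-\rho(s)|}\,\|A_{v(s)}(\rho(s))v'(s)\|\le e^{C_0|r-\rho(s)|}. $$
Setting $b(s)=e^{C_0|s|}$ and using $|r-\rho(s)|\le\ell<r$, integration over $s\in[0,\ell]$ gives
$$ {\rm length}(\beta)=\int_0^\ell\|A_{v(s)}(r)v'(s)\|\,ds\le\Big(\max_{|s|\le r}b(s)\Big)\,\ell=\Big(\max_{|s|\le r}b(s)\Big)\,d(p,q_0), $$
as claimed.

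The step I expect to be the main obstacle is the two-sided control of the logarithmic derivative of $\|J(t)\|$, i.e.\ the full operator norm bound on the second fundamental form of geodesic spheres. Its upper eigenvalue bound follows directly from the lower sectional curvature bound by the standard Rauch comparison, but the lower eigenvalue bound --- that the spheres are not too concave, so that $\|A_v(t)w\|$ cannot decrease too fast as $t$ grows --- is precisely where the absence of conjugate points enters, through comparison with the stable Jacobi tensor; one also has to ensure that all these estimates are uniform in $v\in SX$. This is exactly what the cited bound from \cite[Lem. 2.2]{KnPe2} packages, so in the present setting the estimate is available, but it is the conceptual heart of the argument and the reason the two hypotheses (no conjugate points together with a lower curvature bound) are both needed. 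Note in particular that this approach never requires the chord $\gamma$ to stay inside $\overline{B}_r(p_0)$: the two-sided bound handles $\rho(s)>r$ and $\rho(s)<r$ simultaneously.
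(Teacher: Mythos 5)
Your proof is correct, and its first half is the paper's own: the polar-coordinate decomposition $\gamma'(s)=\rho'(s)\,c_{v(s)}'(\rho(s))+A_{v(s)}(\rho(s))v'(s)$ and $\beta'(s)=A_{v(s)}(r)v'(s)$, the Gauss-lemma orthogonality giving $\|A_{v(s)}(\rho(s))v'(s)\|\le\|\gamma'(s)\|$, the radius control $r-\ell\le\rho(s)\le r+\ell$, and the trivial lower bound all appear there essentially verbatim. Where you genuinely diverge is the key estimate comparing $\|A_{v(s)}(r)v'(s)\|$ with $\|A_{v(s)}(\rho(s))v'(s)\|$. The paper writes the transition operator as a Jacobi tensor of $S$-type, $A_v(r)A_v^{-1}(x)=S_{v,x}(x-r)$, and invokes a uniform norm bound $\|S_{v,x}(t)\|\le b(t)$ for $x\ge 1$ from the proof of Lemma 2.16 in \cite{Kn1}; that bound needs only absence of conjugate points and a \emph{lower} sectional curvature bound, which is exactly the generality in which the proposition is stated (the sentence preceding it in the paper makes this explicit --- \eqref{eq:curvboundcond} is not assumed there). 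You instead run a Gr\"onwall argument on $\log\|A_v(t)w\|$ using the uniform shape-operator bound $\|A_v'(t)A_v^{-1}(t)\|\le\sqrt{R_0}\coth(\sqrt{R_0})$ for $t\ge 1$ from \cite[Lem. 2.2]{KnPe2}; this buys an explicit $b(s)=e^{C_0|s|}$ and avoids the re-basing identity for Jacobi tensors, but the lemma you cite is stated under the two-sided bound \eqref{eq:curvboundcond}, so taken literally your argument establishes the proposition only in the setting where the paper actually applies it. Your closing paragraph correctly identifies and essentially repairs this: the upper eigenvalue bound on $A_v'A_v^{-1}$ is Riccati/Rauch comparison from $K\ge -R_0$, while the lower eigenvalue bound (say $A_v'(t)A_v^{-1}(t)\ge -\sqrt{R_0}\,\mathrm{id}$) follows from $K\ge -R_0$ together with global forward existence of the Riccati solution, i.e.\ absence of conjugate points --- so your constant $C_0$, and hence your proof, is available under the paper's weaker hypotheses as well, and the two arguments are of equal strength, with yours giving the explicit exponential form of $b$.
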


\begin{proof}
  Let $\gamma: [0,d(p,q_0)] \to X$ be the geodesic connecting $p$ and
  $q_0$. We first write $\gamma$ and $\beta$ in polar coordinates,
  i.e.,
  $$ \gamma(t) = \exp_{p_0} (d(t)v(t)), \quad \beta(t) = \exp_{p_0} (rv(t)) $$
  with $d(t) = d(p_0,\gamma(t)) > 1$ and $v: [0,d(p,q_0)] \to
  S_{p_0}X$. Then we have
  $$ \gamma'(t) = d'(t) c_{v(t)}'(d(t)) + A_{v(t)}(d(t))(v'(t)) $$
  and $\beta'(t) = A_{v(t)}(r)(v'(t))$ . Note that the lower bound on sectional curvature yields the existence of a function
  $b: \R \to [0,\infty)$ such that for all $v \in SX $ and $r \ge 1$ we have  $\|S_{v,r}(t)\| \le b(t) $ 
  (see proof of Lemma 2.16 in \cite{Kn1}). Using $A_v(r) A_v^{-1}(x) =
  S_{v,x}(x-r)$ we therefore obtain 
  
  \begin{eqnarray*}
    \Vert \beta'(t) \Vert &=& 
    \Vert A_{v(t)}(r) v'(t) \Vert   
    \le \Vert A_{v(t)}(r) A_{v(t)}^{-1}(d(t)) \Vert \cdot 
    \Vert  A_{v(t)}(d(t)) v'(t) \Vert \\
    &=& \Vert S_{v(t),d(t)}(d(t)-r) \Vert \cdot 
    \Vert  A_{v(t)}(d(t)) v'(t) \Vert \\
    &\le& b(d(t)-r) \sqrt{\Vert  A_{v(t)}(d(t)) v'(t) \Vert^2 + 
    \Vert d'(t) c_{v(t)}'(d(t) \Vert^2 } \\
    &\le& \left(\max_{\vert s \vert \le r} b(s)\right) \Vert \gamma'(t) \Vert.
  \end{eqnarray*}
  The last inequality above follows from $r - d(p,q_0) \le d(t) =
  d(p_0,\gamma(t)) \le r + d(p,q_0)$ and $d(p. q_0) \le r$.
\end{proof}

For the next result, we need to introduce for every $v \in SX$ and $ r
> 0$ the function $b_{v,r}(p) = d(c_v(r),p)-r$ and the
 Busemann function $b_v(p) = \lim_{r \to \infty} b_{v,r}(p)$. Since we also use a uniform bound on
 on the norm of Jacobi tensor $S_{v,r}(t)$ for all $t \ge 0$ and $r \ge 1$ as has been derived in 
\cite[Cor. 2.6]{KnPe2} we need the assumption on $X$ made at the beginning of this section.
\begin{cor} \label{cor:vwclose} Let $p,q \in X$, $r > 2d(p,q)+1$, $v
  \in S_pX$ and $w = - \grad b_{v,r} (q) \in S_qX$. Then there exists
  a constant $C = C(r) > 0$ such that
  $$ d(c_v(t),c_w(t)) \le (1 + 2 C e^{-\frac{\rho}{2}t}) d(p,q) \quad 
  \text{for all $0 \le t \le r$.}  $$
\end{cor}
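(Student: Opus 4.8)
The plan is to first identify $w$ geometrically and then compare the two geodesics through their common ``target'' $P := c_v(r)$. Since $b_{v,r}(\cdot) = d(c_v(r),\cdot) - r$, its gradient at $q$ is the outward unit normal of the distance sphere about $c_v(r)$, so $w = -\grad b_{v,r}(q)$ points from $q$ towards $P$, and $c_w$ is the unit speed geodesic from $q$ to $P$, reaching $P$ at time $d(q,P) =: r+\delta$. The triangle inequality together with $d(p,P) = r$ gives $|\delta| \le d(p,q)$. Reparametrising both geodesics by distance from $P$, I write $c_v(t) = c_{u_v}(r-t)$ and $c_w(t) = c_{u_w}(r+\delta-t)$, where $u_v = -c_v'(r)$ and $u_w = -c_w'(r+\delta)$ are the directions at $P$ of the reversed geodesics. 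Comparing $c_w(t)$ with the point $c_{u_w}(r-t)$ at the same distance $r-t$ from $P$ costs at most $|\delta| \le d(p,q)$ along $c_w$, so for $0 \le t \le r$ and $s := r-t \in [0,r]$ it suffices to prove
\[
d(c_{u_v}(s), c_{u_w}(s)) \le C(r)\, e^{-\frac{\rho}{2}(r-s)}\, d(p,q).
\]

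To control the left-hand side I would use a geodesic variation emanating from $P$. Put $q' := c_{u_w}(r)=c_w(\delta)$, so that $d(p,q') \le 2 d(p,q)$, and let $\beta$ be the radial projection from $P$ of the minimising geodesic between $p$ and $q'$ into $S_r(P)$, with radial direction $u(\sigma) \in S_PX$, so that $u$ runs from $u_v$ to $u_w$ and $c_{u(\sigma)}(r) = \beta(\sigma)$. Proposition~\ref{prop:inexdist} then bounds the length of $\beta$ by
\[
\int_0^1 \|A_{u(\sigma)}(r)\, u'(\sigma)\|\, d\sigma = \mathrm{length}(\beta) \le 2\Bigl(\max_{|s|\le r} b(s)\Bigr) d(p,q).
\]
For the \emph{same} path $u$, the curve $\sigma \mapsto c_{u(\sigma)}(s)$ joins $c_{u_v}(s)$ to $c_{u_w}(s)$ in $S_s(P)$, whence $d(c_{u_v}(s), c_{u_w}(s)) \le \int_0^1 \|A_{u(\sigma)}(s)\, u'(\sigma)\|\, d\sigma$. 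The whole estimate therefore reduces to transporting the radial Jacobi fields inward with an exponential gain, i.e.\ to the pointwise contraction
\[
\|A_{u}(s)\, x\| \le C_1\, e^{-\frac{\rho}{2}(r-s)}\, \|A_{u}(r)\, x\| \qquad (u \in S_PX,\ 0 \le s \le r).
\]

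This contraction is where asymptotic harmonicity enters, and I expect it to be the main obstacle. Using the identity $A_v(r) A_v^{-1}(s) = S_{v,s}(s-r)$ from the proof of Proposition~\ref{prop:inexdist}, one has $A_{u}(r) x = S_{u,s}(s-r)\, A_{u}(s) x$, so the contraction is equivalent to the uniform exponential lower bound
\[
\|S_{u,s}(s-r)\, y\| \ge \tfrac{1}{C_1}\, e^{\frac{\rho}{2}(r-s)}\, \|y\|
\]
for the finite stable Jacobi tensor (equivalently, via $S_{v,s}(\tau) = U_{-v,s}(-\tau)$, for the finite unstable tensor $U_{-u,s}$ evaluated at $r-s$). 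The infinite analogue $\|U_v(\tau) y\| \ge \frac{1}{a_2} e^{\frac{\rho}{2}\tau}\|y\|$ was already obtained in the proof of Proposition~\ref{prop:Avest} from $D(v) \ge \rho\cdot\id$ and \cite[Prop.~2.5]{KnPe2}; the task is to upgrade it uniformly in $s \in [0,r]$ and $v \in SX$ to the finite tensors, which I would do by combining that lower bound with the uniform upper bounds on $\|S_{v,s}\|$ of \cite[Cor.~2.6]{KnPe2} through a Riccati comparison (letting the finite tensors be controlled by the infinite ones as $s \to \infty$). Assembling the three displayed estimates gives $d(c_{u_v}(s), c_{u_w}(s)) \le 2 C_1 \bigl(\max_{|s|\le r} b(s)\bigr) e^{-\frac{\rho}{2}(r-s)} d(p,q)$, and the reduction of the first paragraph then yields $d(c_v(t), c_w(t)) \le (1 + 2C e^{-\frac{\rho}{2}t}) d(p,q)$ with $C = C(r) = C_1 \max_{|s|\le r} b(s)$. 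Here the hypothesis $r > 2d(p,q)+1$ is what keeps all intermediate points in the regime ($d(\cdot,P) > 1$) in which Propositions~\ref{prop:Avest} and \ref{prop:inexdist} are applicable.
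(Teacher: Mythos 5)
Your proposal is correct and follows essentially the same route as the paper's proof: the paper likewise works on the sphere $S_r(p_0)$ around $p_0=c_v(r)$, takes $q_0=c_w(d(p_0,q)-r)$ (your $q'$), controls the spherical distance $d_{S_r(p_0)}(p,q_0)$ via Proposition \ref{prop:inexdist}, contracts the connecting curve radially inward, and assembles the final bound with the same triangle inequality costing $d(q,q_0)\le d(p,q)$. The only divergence is that the contraction you flag as ``the main obstacle'' needs no new Riccati argument: it is precisely the uniform decay $\Vert S_{v,r}(t)\Vert \le a_2 e^{-\frac{\rho}{2}t}$ of \cite[Cor.~2.6]{KnPe2}, which the paper applies directly to the variation field $\beta_t'(s)=S_{N(\beta(s)),r}(t)(\beta'(s))_t$, and from which your lower bound follows in one line via the renormalization $S_{w,r}(\tau)=S_{u,s}(\tau+s-r)\,S_{u,s}^{-1}(s-r)$ (with $w$ the radial direction at $c_u(s-r)$), giving $\Vert S_{u,s}^{-1}(s-r)\Vert=\Vert S_{w,r}(r-s)\Vert\le a_2e^{-\frac{\rho}{2}(r-s)}$, i.e.\ your estimate with $C_1=a_2$.
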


\begin{proof}
  Let $p_0 = c_v(r)$, $q_0 = c_w(d(p_0,q)-r) \in S_r(p_0)$ and $w_0 =
  c_w'(d(p_0,q)-r)$. Then we have 
  $$ d(p,q_0) \le d(p,q) + d(q,q_0) \le 2 d(p,q) < r-1. $$
  Let $\beta: [0,1] \to S_r(p_0)$ be the intrinsic geodesic in
  $S_r(p_0)$ connecting $p$ and $q_0$. Let $d_{p_0}(x) = d(p_0,x)$ and
  $N(x) = - \grad d_{p_0}(x)$ for $x \neq p_0$. Let $\beta_t: [0,1]
  \to S_{r-t}(p_0)$ defined by $\beta_t(s) = c_{N(\beta(s))}(t)$ for
  $t \in [0,r)$. Then $\beta_t'(s) =
  S_{N(\beta(s)),r}(t)(\beta'(s))_t$, which implies, using
  \cite[Cor. 2.6]{KnPe2},
  $$ \Vert \beta_t'(s) \Vert \le \Vert S_{N(\beta(s)),r}(t) \Vert \cdot 
  \Vert \beta'(s) \Vert \le a_2 e^{-\frac{\rho}{2}t} \Vert \beta'(s) \Vert. $$
  Consequently,
  $$ d(c_v(t),c_{w_0}(t)) \le {\rm length}(\beta_t) \le 
  a_2 e^{-\frac{\rho}{2}t} d_{S_r(p_0)}(p,q_0) \le C e^{-\frac{\rho}{2}t} d(p,q_0)$$
  with $C = a_2 \max_{\vert s \vert \le r} b(s)$, using
  Proposition \ref{prop:inexdist}. This implies
  \begin{eqnarray*} 
  d(c_v(t),c_w(t)) &\le& d(c_v(t),c_{w_0}(t)) + d(c_{w_0}(t),c_w(t)) \\
  &\le& C e^{-\frac{\rho}{2}t} d(p,q_0) + d(q,q_0)\\ 
  &\le& (1 + 2 C e^{-\frac{\rho}{2}t}) d(p,q).
  \end{eqnarray*}
\end{proof}

Now we prove surjectivity of $\varphi_p$: Let $c: [0,\infty) \to X$ be
a geodesic ray with $w = c'(0) \in S_qX$. Let $v = - \grad b_w(p) \in
S_pX$.  Then $c_v$ is asymptotic to $c_w$ by Corollary
\ref{cor:vwclose} with $r = \infty$. Therefore $\varphi_p(v) =
c(\infty)$ and $\varphi_p$ is surjective.

We define $\overline{X} = X \cup X(\infty)$ and introduce for every $p
\in X$ the following bijective map $\bar \varphi_p: \overline{B_1(p)}
\to \overline{X}$, where $\overline{B_1(p)} \subset T_pX$ is the
closed ball of radius $1$:
$$ \bar \varphi_p(v) = \begin{cases} \varphi_p(v) & \text{if $\Vert v \Vert =1$,} \\ \exp_p\left( \frac{1}{1-\Vert v \Vert}  v\right) & \text{if $\Vert v \Vert < 1$.}\end{cases} $$
We define a topology on $\overline{X}$ such that the bijective map
$\bar \varphi_p: \overline{B_1(p)} \to \overline{X}$ is a homeomorphism.
Next we show that this topology on $\overline{X}$ does not depend on
the reference point $p$. For that we need to show that $\bar
\varphi_{p,q} = \bar \varphi_q^{-1} \circ \bar \varphi_p:
\overline{B_1(p)} \to \overline{B_1(q)}$ is a homeomorphism.

For the continuity of $\bar \varphi_{p,q}$ note first that
$$ \bar \varphi_{p,q}(v) = 
\begin{cases} - \grad b_v(q) & \text{if $\Vert v \Vert =1$,} \\
  \exp_q^{-1}\left( \exp_p\left( \frac{1}{1-\Vert v \Vert} v\right)
  \right) & \text{if $\Vert v \Vert < 1$.} \end{cases} $$ 
Let $v_n \in \overline{B_1(p)}$ such that $v_n \to v \in
\overline{B_1(p)}$. If $\Vert v \Vert < 1$, the continuity of $\bar
\varphi_{p,q}$ at $v$ follows from the continuity of the exponential
maps. If $\Vert v \Vert = 1$, it suffices to consider two cases: in
the first case we have $0 \neq \Vert v_n \Vert < 1$ for all $n$ and
$\Vert v_n \Vert \to 1$, and in the second case we have $\Vert v_n
\Vert = 1$ for all $n$. We present the prove of the first case, the
second case goes analogously: Note that we have
\begin{multline*} 
  \exp_q^{-1}\left( \exp_p\left( \frac{1}{1-\Vert v_n \Vert} v_n
    \right) \right) = \\ - \frac{d(q,c_{v_n}(f(\Vert v_n
    \Vert))}{1+d(q,c_{v_n}(f(\Vert v_n \Vert))} \grad
  b_{\frac{v_n}{\Vert v_n \Vert}, f(\Vert v_n \Vert)}(q) = w_n,
\end{multline*}
where $f(x) = \frac{x}{1-x}$. We need to show that $w_n \to -\grad
b_v(q)$.  Choose a convergent subsequence $w_{n_j} \in S_qX$ with
limit $w \in S_qX$. Then there exists a constant $a > 0$ such that for
all sufficiently large $n \in \N$
$$ d(c_{v_n}(t),c_{w_n}(t)) \le a \quad 
\text{for all $0 \le t \le f(\Vert v_n \Vert) = r_n$,} $$
by Corollary \ref{cor:vwclose}. Note that $r_n \to \infty$. This
implies that
$$ d(c_v(t),c_w(t)) \le a \quad \text{for all $t \ge 0$,} $$
i.e., $c_v$ and $c_w$ are asymptotic geodesic rays. By Corollary
\ref{cor:vwclose}, $c_v$ and $c_{-\grad_v(q)}$ are also
asymptotic. Therefore, by the injectivity of $\varphi_q$, we have $w =
-\grad_v(q)$. This finishes the proof that $\bar \varphi_{p,q}$ is a
homeomorphism.

This topology on $\overline{X}$ was first introduced for Hadamard
manifolds by Eberlein-O'Neill \cite{EON} and is called {\em cone
  topology}. The points in $X(\infty) \subset \overline{X}$ are called
{\em points at infinity}. Note that a sequence $x_n \in X$ converges
in the cone topology to a point at infinity if and only if for every
$p \in X$ we have $d(x_n,p) \to \infty$ and for every $\epsilon > 0$
there exists $n(\epsilon)$ such that $\angle_p(x_n,x_m) < \epsilon$
for all $n,m \ge n(\epsilon)$. We write ``$\angle_p(x_n,x_m) \to 0$ as
$n,m \to \infty$'' for the latter.

\section{Gromov hyperbolicity}

We start this section by introducing the Gromov product. 

\begin{dfn} \label{dfn:gromprod}
  Let $(X,d)$ be a metric space and $x_0 \in X$ a reference point. The
  {\em Gromov product} $(x \vert y)_{x_0}$ of $x,y \in X$ is defined as
  $$
  (x \vert y)_{x_0} = \frac{1}{2} (d(x,x_0) + d(y,x_0) - d(x,y))
  $$
\end{dfn}

Note that the Gromov product $(x \vert y)_{x_0}$ is non-negative, by the
triangle inequality. A metric space $(X,d)$ is called a {\em geodesic
  space}, if any two points $x,y \in X$ can be connected by a
geodesic, i.e., if there exists a curve $\sigma_{xy}: [0,d(x,y)] \to
X$ connecting $x$ and $y$, such that $d(\sigma_{xy}(s),\sigma_{xy}(t))
= |t-s|$ for all $s,t \in [0,d(x,y)]$.

\begin{dfn}
  A geodesic space $(X,d)$ is called {\em $\delta$-hyperbolic} if
  every geodesic triangle $\Delta$ is $\delta$-thin, i.e., every side
  of $\Delta$ is contained in the union of the $\delta$-neighborhoods
  of the other two sides. If a geodesic space $(X,d)$ is
  $\delta$-hyperbolic for some $\delta \ge 0$, we call $(X,d)$ a
  Gromov hyperbolic space.
\end{dfn}

Let us recall the following two general results for Gromov hyperbolic
spaces. 

\begin{prop}(see \cite[Chapter 1, Prop. 3.6]{CDP}) \label{prop:gromprod1} 
  Let $(X,d)$ be a $\delta$-hyperbolic space. Then we have for all
  $x_0,x,y,z \in X$:
  $$
  (x \vert y)_{x_0} \geq \min \{(x \vert z)_{x_0}, (y \vert z)_{x_0}\} - 8 \delta.
  $$
\end{prop}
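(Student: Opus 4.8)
The plan is to compare the Gromov product $(x \vert y)_{x_0}$ with the distance $d(x_0,[x,y])$ from the basepoint to a geodesic segment, and then reduce the asserted inequality to an elementary comparison of these distances. Since $(X,d)$ is a geodesic space, for any pair $x,y$ we may fix a geodesic $[x,y]$ and let $w \in [x,y]$ realise $d(x_0,[x,y])$. Combining the triangle inequalities $d(x_0,x) \le d(x_0,w)+d(w,x)$ and $d(x_0,y) \le d(x_0,w)+d(w,y)$ with the identity $d(x,y)=d(x,w)+d(w,y)$ (valid because $w$ lies on the geodesic) immediately gives the elementary upper bound $(x \vert y)_{x_0} \le d(x_0,[x,y])$. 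The crucial companion is the lower bound $(x \vert y)_{x_0} \ge d(x_0,[x,y]) - 4\delta$, which is where $\delta$-thinness genuinely enters.

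For this lower bound I would exploit the internal (tripod) points of the geodesic triangle with vertices $x_0,x,y$. On $[x_0,x]$ let $p$ be the point with $d(x_0,p)=(x \vert y)_{x_0}$, and on $[x,y]$ let $w$ be the point with $d(x,w)=(x_0 \vert y)_x$; both lie at distance $(x_0 \vert y)_x$ from $x$. The standard \emph{insize estimate} for $\delta$-thin triangles (a direct consequence of the thinness condition applied side by side) bounds the mutual distances of the three internal points by $4\delta$, so $d(p,w)\le 4\delta$. Hence
$$ d(x_0,[x,y]) \le d(x_0,w) \le d(x_0,p)+d(p,w) \le (x \vert y)_{x_0}+4\delta, $$
which is the desired lower bound.

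Next I would prove a purely geometric comparison for the $\delta$-thin triangle with vertices $x,y,z$: letting $w\in[x,y]$ realise $d(x_0,[x,y])$, thinness places $w$ within $\delta$ of a point $w'$ on $[x,z]\cup[y,z]$, whence $\min\{d(x_0,[x,z]),d(x_0,[y,z])\}\le d(x_0,w')\le d(x_0,[x,y])+\delta$. Assembling the three ingredients, and applying the elementary upper bound to the two products on the right-hand side, yields
$$ (x \vert y)_{x_0} \ge d(x_0,[x,y])-4\delta \ge \min\{d(x_0,[x,z]),d(x_0,[y,z])\}-5\delta \ge \min\{(x \vert z)_{x_0},(y \vert z)_{x_0}\}-5\delta, $$
which is in fact stronger than the stated inequality with its more generous constant $8\delta$.

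The main obstacle is the lower bound $(x \vert y)_{x_0}\ge d(x_0,[x,y])-4\delta$: everything else is bookkeeping with triangle inequalities, but this step genuinely requires hyperbolicity through the insize estimate, whose clean derivation and sharp constant demand a careful side-by-side use of the thinness of the triangle $x_0xy$. If one prefers not to isolate the insize lemma, the same bound can be recovered by tracking, along $[x_0,x]$, the transition point where nearby points switch from being $\delta$-close to $[x_0,y]$ to being $\delta$-close to $[x,y]$; this transition occurs near $p$ and reproduces the $O(\delta)$ estimate, at worst with a slightly larger constant that still fits within the stated $8\delta$.
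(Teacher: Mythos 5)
Your proposal is correct, and the comparison here is necessarily one-sided: the paper gives no proof of Proposition \ref{prop:gromprod1} at all, importing it (like its companion, Proposition \ref{prop:gromprod2}) directly from \cite[Chapter 1, Prop.~3.6]{CDP}. What you have effectively done is reprove both quoted facts at once from the slim-triangle definition the paper actually uses: your two comparison bounds $(x \vert y)_{x_0} \le d(x_0,\sigma_{xy}) \le (x \vert y)_{x_0} + 4\delta$ are a sharpened form of Proposition \ref{prop:gromprod2} (constant $4\delta$ in place of the quoted $32\delta$, the larger constant in \cite{CDP} being an artefact of their definitional chain), and feeding these through the thinness of the triangle $xyz$ yields the four-point inequality with $5\delta$, which indeed implies the stated $8\delta$ version. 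I verified your constants: writing $\gamma = (x\vert y)_{x_0}$, $\alpha = (x_0\vert y)_x$, $\beta = (x_0\vert x)_y$, thinness places the internal point $p \in [x_0,x]$ within $\delta$ of some $u$ on one of the other two sides, and since $d(x_0,p) = \gamma$, $d(x,p) = \alpha$, comparing arclength parameters along that side shows $u$ is within $\delta$ of the internal point of that side; hence each internal point is $2\delta$-close to an internal point on an adjacent side, and any two internal points are $4\delta$-close, exactly as you claim. The remaining assembly --- $d(x_0,\sigma_{xy}) \ge \min\{d(x_0,\sigma_{xz}), d(x_0,\sigma_{yz})\} - \delta$ from thinness of $xyz$, plus the elementary bounds $(x\vert z)_{x_0} \le d(x_0,\sigma_{xz})$ and $(y\vert z)_{x_0} \le d(x_0,\sigma_{yz})$ --- is sound. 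Two points worth making explicit in a write-up: the nearest point $w \in \sigma_{xy}$ exists because geodesic segments are compact, and in the two applications (the triangle $x_0xy$ and the triangle $xyz$) you must use the same fixed geodesic $\sigma_{xy}$, which is legitimate since the paper's definition quantifies over all geodesic triangles.
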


\begin{prop}(see \cite[Chapter 3, Lem. 2.7]{CDP}) \label{prop:gromprod2}
  Let $(X,d)$ be a $\delta$-hyperbolic space. Then we have for all
  $x_0,x,y \in X$:
  $$
  (x \vert y)_{x_0} \leq d(x_0,\sigma_{xy}) \leq (x \vert y)_{x_0} + 32 \delta.
  $$
\end{prop}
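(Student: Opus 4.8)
The lower bound requires no hyperbolicity at all, so I would dispose of it first by the triangle inequality. Fix any $w \in \sigma_{xy}$. Since $w$ lies on a geodesic segment joining $x$ and $y$, we have $d(w,x)+d(w,y)=d(x,y)$. Adding the two triangle inequalities $d(x_0,x)\le d(x_0,w)+d(w,x)$ and $d(x_0,y)\le d(x_0,w)+d(w,y)$ and subtracting $d(x,y)$ yields
$$ d(x_0,x)+d(x_0,y)-d(x,y)\le 2\,d(x_0,w), $$
that is, $(x\vert y)_{x_0}\le d(x_0,w)$. Taking the infimum over $w\in\sigma_{xy}$ gives the asserted lower bound $(x\vert y)_{x_0}\le d(x_0,\sigma_{xy})$.

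For the upper bound the plan is to exhibit one explicit point of $\sigma_{xy}$ that is close to $x_0$, using the $\delta$-thinness of the geodesic triangle $\Delta$ with vertices $x_0,x,y$ and sides $\sigma_{x_0x},\sigma_{x_0y},\sigma_{xy}$. The idea is to find on the side $\sigma_{x_0x}$ the place where the nearest opposite side switches from $\sigma_{x_0y}$ (which is close near $x_0$) to $\sigma_{xy}$ (which is close near $x$). Concretely I would set
$$ A=\{\, z\in\sigma_{x_0x}: d(z,\sigma_{x_0y})\le\delta \,\}, \qquad B=\{\, z\in\sigma_{x_0x}: d(z,\sigma_{xy})\le\delta \,\}. $$
By the thin-triangle hypothesis every point of $\sigma_{x_0x}$ lies within $\delta$ of $\sigma_{x_0y}\cup\sigma_{xy}$, so $A\cup B=\sigma_{x_0x}$; since $A$ and $B$ are closed (distance to a compact set is continuous) and $\sigma_{x_0x}$ is connected, we must have $A\cap B\neq\emptyset$. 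I would pick $z^{*}\in A\cap B$ together with points $u\in\sigma_{x_0y}$ and $w\in\sigma_{xy}$ satisfying $d(z^{*},u)\le\delta$ and $d(z^{*},w)\le\delta$.

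It then remains to estimate $s:=d(x_0,z^{*})$. Using that $z^{*}$ lies on $\sigma_{x_0x}$ (so $d(x,z^{*})=d(x_0,x)-s$), that $u$ lies on $\sigma_{x_0y}$ (so $d(u,y)=d(x_0,y)-d(x_0,u)\le d(x_0,y)-s+\delta$), and that $w$ lies on $\sigma_{xy}$, a short computation combining the three $\delta$-bounds gives
$$ d(x,y)=d(x,w)+d(w,y)\le (d(x_0,x)-s+\delta)+(d(x_0,y)-s+3\delta), $$
whence $2s\le 2\,(x\vert y)_{x_0}+4\delta$, i.e. $s\le (x\vert y)_{x_0}+2\delta$, and therefore $d(x_0,w)\le s+\delta\le (x\vert y)_{x_0}+3\delta$ with $w\in\sigma_{xy}$. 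This yields $d(x_0,\sigma_{xy})\le (x\vert y)_{x_0}+3\delta$, which is in fact stronger than the stated bound with $32\delta$ and so implies it (the looser constant being an artifact of the bookkeeping in \cite{CDP}). I expect the only delicate point to be precisely this last estimate: one must chain $d(w,y)\le d(w,z^{*})+d(z^{*},u)+d(u,y)$ and $d(x,w)\le d(x,z^{*})+\delta$ while keeping the directions of all inequalities consistent, since a single reversed bound spoils the final constant.
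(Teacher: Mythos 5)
Your proof is correct, and there is nothing in the paper to compare it against line by line: the paper states this proposition as a quotation from \cite[Chapter 3, Lem. 2.7]{CDP} and gives no proof, so your argument is a genuine (and welcome) self-contained substitute. The lower bound is the standard triangle-inequality computation and is fine. For the upper bound, your connectedness argument is sound, with one small point you should make explicit: before concluding $A \cap B \neq \emptyset$ from $A \cup B = \sigma_{x_0x}$ you need $A$ and $B$ nonempty, which holds because the endpoints witness it ($x_0 \in A$ since $x_0 \in \sigma_{x_0y}$, and $x \in B$ since $x \in \sigma_{xy}$); closedness follows as you say because $z \mapsto d(z,S)$ is $1$-Lipschitz. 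The estimate chain checks out: $d(x,w) \le d(x,z^*) + d(z^*,w) \le d(x_0,x) - s + \delta$; from $d(x_0,u) \ge s - \delta$ one gets $d(u,y) = d(x_0,y) - d(x_0,u) \le d(x_0,y) - s + \delta$, hence $d(w,y) \le 2\delta + d(u,y) \le d(x_0,y) - s + 3\delta$; combining with $d(x,y) = d(x,w) + d(w,y)$ yields $s \le (x \vert y)_{x_0} + 2\delta$ and $d(x_0,\sigma_{xy}) \le d(x_0,z^*) + d(z^*,w) \le s + \delta \le (x \vert y)_{x_0} + 3\delta$, which is sharper than the stated $32\delta$ and so implies it. The discrepancy in constants is exactly what you suspect: \cite{CDP} works with a four-point (Gromov product) formulation of hyperbolicity, and the $32\delta$ accumulates from converting between that and the thin-triangle condition; under the Rips thin-triangle definition adopted in this paper, your $3\delta$ is the natural constant, and the proposition as stated follows a fortiori since $\delta \ge 0$.
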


Now assume that $X$ is a rank one asymptotically harmonic manifold
satisfying \eqref{eq:curvboundcond} and, therefore, a Gromov
hyperbolic space, by \cite[Thm. 1.5]{KnPe2}. We show now that two
sequences $\{ x_n \}, \{ y_n \} \subset X$ have the same limiting
behavior at infinity in the cone topology if and only if
$$
\lim_{n \to \infty} (x_n \vert y_n)_{p} = \infty. 
$$
We note that condition $\lim_{n,m \to \infty} (x_n \vert x_m)_{p} =
\infty$ is used for general Gromov hyperbolic space as a definition
for convergence to infinity (see \cite[Section 2.2]{BS}).

\begin{theorem} \label{thm:gromconv} Let $X$ be a rank one
  asymptotically harmonic manifold satisfying
  \eqref{eq:curvboundcond}. Let $p \in X$ and $\{x_n\}, \{y_n\}$ be
  two sequences in $X$. The following are equivalent.
  \begin{itemize}
  \item[(a)] We have  $d(x_n,p), d(y_n,p) \to \infty$ and $\angle_p(x_n,y_n) \to
    0$ for $n \to \infty$.
  \item[(b)] $(x_n \vert y_n)_p \to \infty$ for $n \to \infty$.
  \end{itemize}
\end{theorem}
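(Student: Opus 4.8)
The plan is to relate the Gromov product $(x_n\mid y_n)_p$ to the comparison-angle quantity $\angle_p(x_n,y_n)$ by squeezing it between two geometric expressions. The heart of the matter is that, in a manifold without conjugate points, the Gromov product $(x\mid y)_p = \frac12(d(x,p)+d(y,p)-d(x,y))$ measures "how far two geodesic segments from $p$ stay together before splitting," while $\angle_p(x,y)$ measures their initial angular separation. Corollary \ref{cor:uniformdiv} (uniform divergence of geodesics) and Proposition \ref{prop:inexdist} (comparing intrinsic sphere distance with ambient distance) are exactly the two tools that convert between these.

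\medskip

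\noindent\textbf{From (a) to (b).} Assume $d(x_n,p),d(y_n,p)\to\infty$ and $\angle_p(x_n,y_n)\to 0$. Write $x_n=\exp_p(s_n v_n)$ and $y_n=\exp_p(t_n w_n)$ with $v_n,w_n\in S_pX$, $s_n=d(x_n,p)$, $t_n=d(y_n,p)$, and set $\alpha_n=\angle(v_n,w_n)=\angle_p(x_n,y_n)\to 0$. I would first reduce to the case $s_n=t_n$: since the Gromov product is controlled up to a bounded error when one truncates the longer geodesic to the length of the shorter (the difference $|d(x,p)-d(y,p)|$ enters symmetrically and can be absorbed), it suffices to bound $(c_{v_n}(r_n)\mid c_{w_n}(r_n))_p$ where $r_n=\min(s_n,t_n)\to\infty$. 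For equal radii, $(c_{v_n}(r_n)\mid c_{w_n}(r_n))_p = r_n - \tfrac12 d(c_{v_n}(r_n),c_{w_n}(r_n))$. Now Proposition \ref{prop:inexdist} bounds the ambient distance $d(c_{v_n}(r_n),c_{w_n}(r_n))$ from above by $(\max_{|s|\le r_n} b(s))\,\alpha_n$ times the intrinsic sphere geodesic, hence roughly by a quantity growing like $e^{C r_n}\alpha_n$; but this crude bound is \emph{too weak}, because $d(x,p)-d(y,p)$ being bounded is not automatic. The clean route is instead to use that $d(c_{v_n}(r_n),c_{w_n}(r_n))$ stays bounded once $\alpha_n$ shrinks fast enough relative to $r_n$—but since we only know $\alpha_n\to 0$ with no rate, I must argue differently: fix $T>0$; then for $n$ large, $\angle_p(x_n,y_n)$ is small enough that the initial segments of length $T$ of $c_{v_n}$ and $c_{w_n}$ stay within, say, distance $1$, and by the convexity-type behavior of distance (monotonicity of the divergence), $d(c_{v_n}(r_n),c_{w_n}(r_n)) - d(c_{v_n}(T),c_{w_n}(T))$ grows, so the Gromov product at $p$ is at least $T - O(1)$. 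Since $T$ is arbitrary, $(x_n\mid y_n)_p\to\infty$. This monotonicity of geodesic separation in spaces without conjugate points is the delicate point.

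\medskip

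\noindent\textbf{From (b) to (a).} Assume $(x_n\mid y_n)_p\to\infty$. Non-negativity and the identity $(x_n\mid y_n)_p\le \min(d(x_n,p),d(y_n,p))$ (from the triangle inequality) immediately force $d(x_n,p),d(y_n,p)\to\infty$. For the angle, I argue by contradiction: if $\angle_p(x_n,y_n)\not\to 0$, pass to a subsequence with $\angle_p(x_n,y_n)\ge\alpha>0$. Then Corollary \ref{cor:uniformdiv} gives $d(c_{v_n}(t),c_{w_n}(t))\ge a(t)\,\alpha$ uniformly, with $a(t)\to\infty$. Truncating to radius $r_n=\min(s_n,t_n)$ and using the triangle inequality to compare $d(x_n,y_n)$ with $d(c_{v_n}(r_n),c_{w_n}(r_n))$, the lower divergence bound forces $d(x_n,y_n)\ge d(x_n,p)+d(y_n,p)-2r_n + (\text{something growing})$, whence $(x_n\mid y_n)_p$ stays \emph{bounded}, contradicting (b). Making this estimate precise—carefully tracking the $O(1)$ errors from the radius truncation—is the second technical step.

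\medskip

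\noindent\textbf{The main obstacle} is the forward direction (a)$\Rightarrow$(b): establishing that bounded (or controlled) geodesic divergence over long scales follows from a vanishing angle \emph{without a rate}, which requires leveraging the monotone divergence of geodesics in manifolds without conjugate points rather than the mere qualitative limit $a(t)\to\infty$. I expect the cleanest formulation to invoke the no-conjugate-points structure to guarantee that once two geodesics from $p$ are close on $[0,T]$, the contribution to the Gromov product is at least of order $T$, and then to let $T\to\infty$.
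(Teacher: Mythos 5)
There is a genuine gap, and it sits in your direction (b)$\Rightarrow$(a). For the angle statement you truncate to $r_n=\min(d(x_n,p),d(y_n,p))$ and claim the truncation costs only $O(1)$; it does not. Writing $x_n=c_{v_n}(s_n)$, $y_n=c_{w_n}(t_n)$ with $s_n\le t_n$, your triangle-inequality comparison only yields
$$(x_n\vert y_n)_p \;\le\; t_n-\tfrac12\, d\bigl(c_{v_n}(s_n),c_{w_n}(s_n)\bigr)\;\le\; t_n-\tfrac12\,a(s_n)\,\alpha,$$
and since $t_n-s_n$ is completely uncontrolled (take $t_n=s_n^2$), this does not bound the Gromov product. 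Even when $s_n=t_n$ the bound fails: Corollary \ref{cor:uniformdiv} gives only $a(t)=\min\{t/\pi,\,ae^{\rho t/4}\}$, i.e.\ linear growth with slope $1/\pi$, so you get $(x_n\vert y_n)_p\le r_n\bigl(1-\tfrac{\alpha}{2\pi}\bigr)$, which can still diverge; to conclude boundedness by your route you would need divergence at the maximal rate $d(c_{v_n}(t),c_{w_n}(t))\ge 2t-C$, far stronger than what uniform divergence provides. The missing idea is Gromov hyperbolicity, which you never invoke in either direction: the paper uses $\delta$-thin triangles together with Proposition \ref{prop:gromprod2} to locate points $\hat x_n\in\sigma_{px_n}$ and $\hat y_n\in\sigma_{py_n}$ with $d(p,\hat x_n)=d(p,\hat y_n)\ge (x_n\vert y_n)_p-\delta$ and $d(\hat x_n,\hat y_n)\le 4\delta$, and only then applies Corollary \ref{cor:uniformdiv} at these \emph{equal-parameter} points to force $4\delta\ge a(d(\hat x_n,p))\,\angle_p(x_n,y_n)$, hence $\angle_p(x_n,y_n)\to 0$. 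Some such global input is unavoidable here, since (b) is precisely the hyperbolic-space definition of joint convergence to infinity.

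Your direction (a)$\Rightarrow$(b) reaches the right conclusion but by a flawed mechanism: the ``convexity-type behavior / monotonicity of divergence'' you lean on is a nonpositive-curvature fact and is not available in manifolds that merely have no conjugate points (the paper explicitly allows zero and mixed curvature). Fortunately it is also unnecessary, and this is not ``the delicate point'' you make it out to be: once $\angle_p(x_n,y_n)$ is small enough that $d(c_{v_n}(T),c_{w_n}(T))\le 1$ (which follows for fixed $T$ from the lower curvature bound in \eqref{eq:curvboundcond}), the plain triangle inequality through these two points gives $d(x_n,y_n)\le (d(x_n,p)-T)+1+(d(y_n,p)-T)$, hence $(x_n\vert y_n)_p\ge T-\tfrac12$, and letting $T\to\infty$ finishes it. This is essentially the paper's argument (the paper additionally shows $\sigma_{x_ny_n}$ avoids $B_{T-1/2}(p)$ and passes through Proposition \ref{prop:gromprod2}, but the content is the same estimate). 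Your preliminary observation in (b)$\Rightarrow$(a) that $(x_n\vert y_n)_p\le\min(d(x_n,p),d(y_n,p))$ forces the distances to infinity is correct and even slightly more elementary than the paper's appeal to Proposition \ref{prop:gromprod2}; the unrepaired angle argument, however, is where the proposal fails.
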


\begin{proof}
  $(b) \Rightarrow (a)$: $X$ is $\delta$-hyperbolic for some $\delta
  \ge 0$. Let $(x_n \vert y_n)_p \to \infty$.  We know from
  Proposition \ref{prop:gromprod2} that $d(p,x_n), d(p,y_n) \ge (x_n
  \vert y_n)_p$, which shows that $d(p,x_n), d(p,y_n) \to \infty$ as $n \to
  \infty$. It remains to show that $\angle_p(x_n,y_n) \to 0$. Let
  $U_{px_n}, U_{py_n}$ be $\delta$-tubes around the geodesic arcs
  $\sigma_{px_n}$ and $\sigma_{py_n}$. Then the geodesic $\sigma_{x_n
    y_n}$ must contain a point $p_1 \in U_{px_n} \cap U_{py_n}$. We
  conclude from Proposition \ref{prop:gromprod2} that
  $$ d(p_1,p) \ge d(\sigma_{x_ny_n},p) \ge (x_n \vert y_n)_p. $$
  Let $\gamma_1$ and $\gamma_2$ be the shortest curves connecting
  $p_1$ with $\sigma_{px_n}$ and $\sigma_{py_n}$ at the points $\hat x_n$
  and $y_n'$, see Figure \ref{fig:gromthin}. Then
  $d(p_1,\hat x_n),d(p_1,y_n') \leq \delta$, which implies $d(\hat x_n,y_n') \le
  2\delta$ and
  $$ d(\hat x_n,p), d(y_n',p) \ge (x_n \vert y_n)_p - \delta. $$

  \begin{figure}[h]
  \begin{center}
    \psfrag{Upxn}{$U_{px_n}$} 
    \psfrag{Upxm}{$U_{py_n}$}
    \psfrag{xn}{$x_n$}
    \psfrag{xm}{$y_n$}
    \psfrag{p1}{$p_1$}
    \psfrag{p}{$p$}
    \psfrag{yn}{$\hat x_n$}
    \psfrag{ym}{$y_n'$}
         \includegraphics[width=10cm]{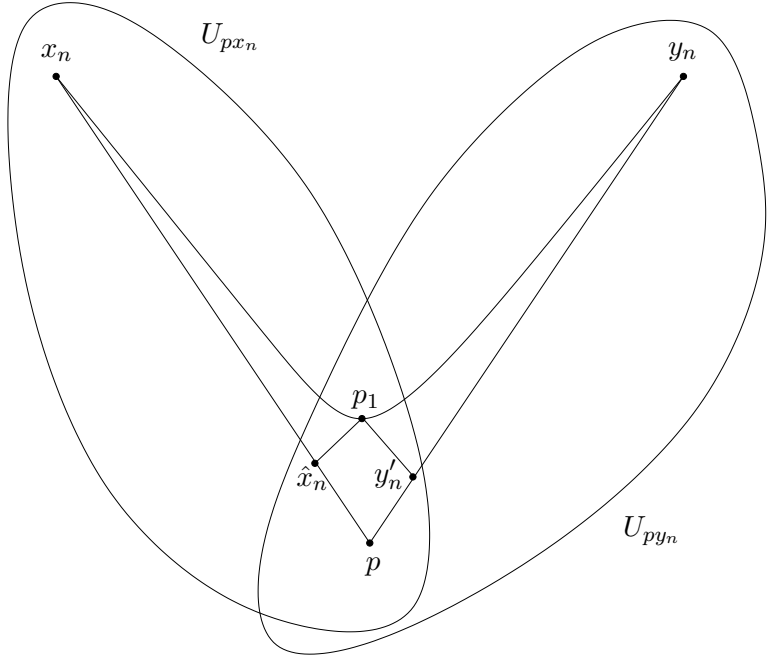}
  \end{center}
  \caption{Illustration of the proof of $(b) \Rightarrow (a)$ in Theorem
    \ref{thm:gromconv}}
  \label{fig:gromthin}
  \end{figure}

  We assume, without loss of generality, that $d(\hat x_n,p) \ge
  d(y_n',p)$. Let $\hat y_n \in \sigma_{py_n}$ be such that $d(p,\hat y_n) =
  d(p,\hat x_n)$. This implies that
  $$ d(\hat x_n,p) = d(\hat y_n,p) \ge (x_n \vert y_n)_p - \delta. $$
  Since
  $$ d(y_n',p) \le d(\hat y_n,p) = d(\hat x_n,p) \le d(y_n',p) + d(\hat x_n,y_n')
  \le d(y_n',p) + 2 \delta, $$ 
  and since $y_n', \hat y_n$ lie on the same geodesic arc $\sigma_{py_n}$,
  we have $d(y_n',\hat y_n) \le 2 \delta$. This implies that
  $$ 
  d(\hat y_n,\hat x_n) \le d(y_n',\hat x_n) + d(\hat y_n,y_n') \le 
  2\delta + 2 \delta = 4 \delta. 
  $$ 
  Using Corollary \ref{cor:uniformdiv}, we conclude that
  $$ 4 \delta \ge {\rm length}(\sigma_{\hat x_n \hat y_n}) \ge a(d(\hat x_n,p)) 
  \angle_p(x_n,y_n). $$ 
  Since $d(\hat x_n,p) \to \infty$, we also have
  $a(d(\hat x_n,p)) \to \infty$, which implies that $\angle_p(x_n,y_n) \to 0$.\\

  $(a) \Rightarrow (b)$: Assume $\angle_p(x_n,y_n) \to 0$ and
  $d(x_n,p), d(y_n,p) \to \infty$ for $n \to \infty$. For all $R > 0$,
  there exists $n_0(R) \ge 0$, such that for all $n \geq n_0(R)$:
  \begin{equation} \label{eq:smdist} d(p,x_n), d(p,y_n) \ge R \quad
    \text{and} \quad d(c_{px_n}(R),c_{py_n}(R)) \leq 1,
  \end{equation}
  since $\angle_p(x_n,y_n) \to 0$ for $n \to \infty$. Note that the
  constant $n_0(R)$ does not depend on $p$, but only on the values
  $d(p,x_n), d(p,y_n)$ and $\angle_p(x_n,y_n)$, since $X$ has a
  uniform lower curvature bound.

  We show now the following: {\em The geodesic arc $\sigma_{x_n y_n}$
    has empty intersection with the open ball $B_{R-\frac{1}{2}}(p)$
    for all $n \geq n_0(R)$.}

  If $\sigma_{x_n y_n} \cap B_R(p) = \emptyset$, there is nothing to
  prove. If $\sigma_{x_n y_n} \cap B_R(p) \neq \emptyset$, there exists a
  first $t_0 > 0$ and a last $t_1 > 0$ such that
  $$
  q_1 = \sigma_{x_n y_n}(t_0), \, q_2 = \sigma_{x_n y_n}(t_1) \in S_R(p),
  $$
  where $S_R(p)$ denotes the sphere of radius $R > 0$ around $p$ (see
  Figure \ref{fig:ballR}). Then we have
  $$
  d(q_1,q_2) = l(\sigma_{x_n y_n}) - d(x_n,q_1) - d(y_n,q_2).
  $$

  \begin{figure}[h]
  \psfrag{xn}{$x_n$} 
  \psfrag{xm}{$y_n$}
  \psfrag{q1}{$q_1$}
  \psfrag{q2}{$q_2$}
  \psfrag{SR(p)}{$S_R(p)$}
  \psfrag{spxn(R)}{$\sigma_{px_n}(R)$}
  \psfrag{spxm(R)}{$\sigma_{py_n}(R)$}
  \begin{center}
         \includegraphics[width=8cm]{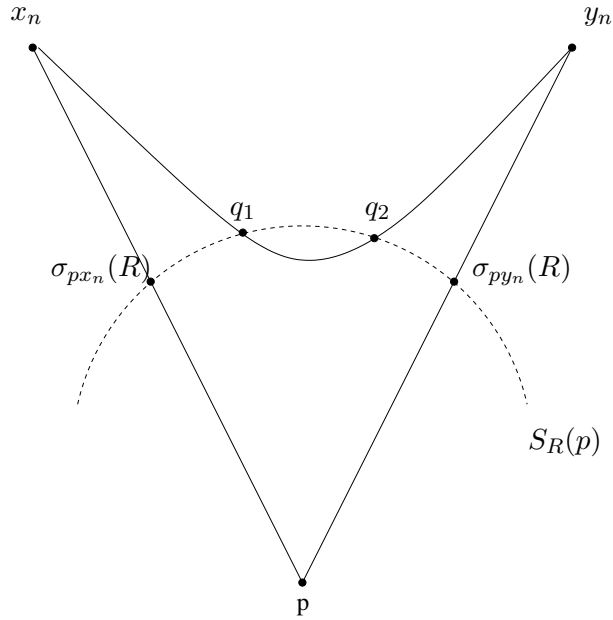}
  \end{center}
  \caption{Illustration of the proof of $(a) \Rightarrow (b)$ in Theorem
    \ref{thm:gromconv}}
  \label{fig:ballR}
  \end{figure}
  
  Using \eqref{eq:smdist}, we have
  \begin{eqnarray*}
    l(\sigma_{x_n y_n}) & \leq & d(x_n,\sigma_{px_n}(R)) + d(\sigma_{px_n}(R), 
    \sigma_{py_n}(R)) + d(\sigma_{py_n}(R),y_n)\\
    & \leq & d(x_n, \sigma_{px_n}(R)) + d(y_n,\sigma_{py_n}(R)) + 1,
  \end{eqnarray*}
  which implies that
  \begin{equation} \label{eq:dq1q2}\begin{aligned} d(q_1,q_2) &\leq
      \left(d(x_n,\sigma_{px_n}(R)) - d(x_n,q_1)\right) \\ &\qquad +
      \left( d(y_n,\sigma_{py_n}(R)) - d(y_n,q_2) \right) +
      1. \end{aligned}
  \end{equation}
  Since $d(p,x_n) = R+d(\sigma_{px_n}(R),x_n) \leq d(q_1,x_n) + R$ (by
  the triangle inequality), we obtain $d(x_n,q_1) -
  d(x_n,\sigma_{px_n}(R)) \geq 0$, and similarly $d(y_n,q_2) -
  d(y_n,\sigma_{py_n}(R)) \geq 0$. This, together with
  \eqref{eq:dq1q2} shows $d(q_1,q_2) \leq 1$. But then the geodesic
  segment of $\sigma_{x_n y_n}$ between $q_1$ and $q_2$ cannot enter
  the ball $B_{R-\frac{1}{2}}(p)$.\\

  Therefore, we have for all $n \ge n_0(R)$,
  $$
  R- \frac{1}{2} \leq d(p,\sigma_{x_n y_n}) \leq (x_n \vert y_n)_p + 32\delta, 
  $$
  using Proposition \ref{prop:gromprod2}. This shows that
  $$
  (x_n \vert y_n)_p \to \infty \quad \text{as $n \to \infty$.}
  $$
\end{proof}

\section{Visibility measures and their  Radon-Nykodym derivative}
\label{sect:visib}

Let $(X,g)$ be a rank one asymptotically harmonic manifold of
dimension $n$. The boundary $X(\infty) \subset \overline{X}$ is
homeomorphic to the sphere $S^{n-1}$ and equipped with the relative
topology of the cone topology. Moreover, we have a family of
visibility measures $\{ \mu_p \in {\mathcal M}_1(X(\infty)) \}_{p \in
  X}$, which were introduced in Definition \ref{def:vismeas}. We will
see that any two visibility measures $\mu_p, \mu_q \in {\mathcal
  M}_1(X(\infty))$ are absolutely continuous, by calculating their
Radon-Nykodym derivative via a limiting process. Similar calculations
were carried out in \cite[Section 6.1]{CaSam} for asymptotically
harmonic manifolds with pinched negative curvature.

\begin{lemma} \label{lem:bijvismap}
  For all $p, q \in X$ there exists $t(p,q) > 0$ such that for all
  $t \ge t(p,q)$ and all $v \in S_qX$ the geodesic ray $c_v: [0,
  \infty) \to X$ intersects $S_t(p)$ in a unique point $F_t(v)$ (see
  Figure \ref{fig_ftqp}). In particular, the map $F_t: S_q X \to
  S_t(p)$ is bijective.
\end{lemma}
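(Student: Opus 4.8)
The plan is to study, for fixed $v \in S_qX$, the radial function $f_v(s) = d(p,c_v(s))$ and to show that for all sufficiently large $t$ the equation $f_v(s) = t$ has exactly one solution. Since $X$ has no conjugate points and is simply connected, $\exp_p$ is a diffeomorphism, so $d_p = d(p,\cdot)$ is smooth on $X \setminus \{p\}$ with $\grad d_p(x) = N(x)$, the unit vector at $x$ pointing away from $p$ along the unique minimizing geodesic from $p$ to $x$. Likewise every geodesic ray is minimizing, so $d(q,c_v(s)) = s$ and hence $f_v(s) \ge s - d(p,q) \to \infty$. In particular $f_v(0) = d(p,q)$ and $f_v$ is continuous, so for every $t > d(p,q)$ the ray $c_v$ meets $S_t(p)$ at least once; I first settle existence this way and then concentrate on uniqueness.

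For uniqueness I would exploit monotonicity. Differentiating gives $f_v'(s) = \langle \grad d_p(c_v(s)), c_v'(s)\rangle = \cos \angle_{c_v(s)}(p,q)$, since $-N(c_v(s))$ and $-c_v'(s)$ are the initial directions of the minimizing geodesics from $x = c_v(s)$ to $p$ and to $q$, respectively. The key point is that this angle tends to $0$ uniformly as $x$ recedes: applying Corollary \ref{cor:uniformdiv} with base point $x$ to the geodesics from $x$ towards $p$ and towards $q$, and using $|d(x,p) - d(x,q)| \le d(p,q)$, yields $\angle_x(p,q) \le 2d(p,q)/a(\min(d(x,p),d(x,q)))$, which goes to $0$ because $a(\tau) \to \infty$. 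Along $c_v$ one has $d(x,q) = s \ge f_v(s) - d(p,q)$, so there is a constant $t_0 = t_0(p,q)$, uniform in $v \in S_qX$, with $f_v'(s) > 0$ whenever $f_v(s) \ge t_0$.

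Granting this, a short argument shows $f_v$ meets each level $t \ge t_0$ exactly once. Let $s_*$ be the first time $f_v(s_*) = t_0$; since $f_v' > 0$ on the set $\{f_v \ge t_0\}$, the function $f_v$ cannot return to the value $t_0$ after $s_*$, so it is strictly increasing from $t_0$ to $\infty$ on $[s_*,\infty)$ while staying below $t_0$ on $[0,s_*)$. Taking $t(p,q) = t_0$ (enlarged if necessary so that $t_0 > d(p,q)$) yields the unique intersection point $F_t(v) = c_v(s(v,t))$ for every $t \ge t(p,q)$.

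Finally I would verify that $F_t: S_qX \to S_t(p)$ is a bijection. Surjectivity is immediate: given $x \in S_t(p)$, the initial direction $v \in S_qX$ of the unique geodesic from $q$ to $x$ satisfies $c_v(d(q,x)) = x$, so $x = F_t(v)$. Injectivity follows because, $X$ having no conjugate points, there is a unique geodesic joining $q$ to any point; if $F_t(v) = F_t(w) = x$, then $c_v$ and $c_w$ both run from $q$ to $x$ and hence coincide, forcing $v = w$. I expect the only genuine obstacle to be the uniform angle decay in the second step; everything else is elementary once Corollary \ref{cor:uniformdiv} is in hand.
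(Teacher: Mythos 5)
Your proposal is correct and essentially reproduces the paper's own argument: the paper likewise rules out a second intersection by applying Corollary \ref{cor:uniformdiv} to the two geodesics running from an intersection point of $c_v$ with $S_t(p)$ back towards $q$ and towards $p$, deriving the contradiction $a(t)\pi/2 \le 2d(p,q) \le a(t)$ at any hypothetical tangent or inward-pointing crossing, which is precisely your bound $\angle_x(p,q) \le 2d(p,q)/a(\tau)$ specialized to such a crossing. Your packaging via the radial function $f_v(s) = d(p,c_v(s))$, with $f_v'(s) = \cos\angle_{c_v(s)}(p,q) > 0$ on the superlevel set $\{f_v \ge t_0\}$, is a monotonicity reformulation of the same idea, and your existence step (continuity plus $f_v(s)\ge s-d(p,q)\to\infty$) and bijectivity step (uniqueness of geodesics from $q$, since $\exp_q$ is a diffeomorphism) match the paper's reasoning.
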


\begin{figure}[h]
  \begin{center}
    \psfrag{SqX}{$S_qX$} 
    \psfrag{q}{$q$}
    \psfrag{v}{$v$}
    \psfrag{Ft(v)}{$F_t(v)$}
    \psfrag{St(p)}{$S_t(p)$}
    \psfrag{t}{$t$}
    \psfrag{p}{$p$}
    \includegraphics[width=8cm]{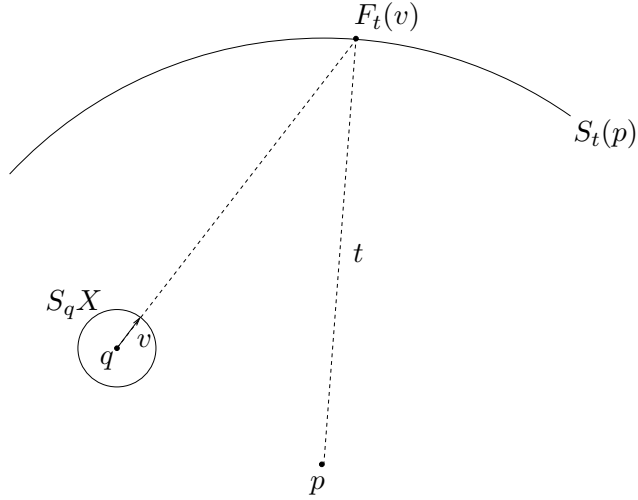}
  \end{center}
  \caption{Illustration of the map $F_t: S_qX \to S_t(p)$}
  \label{fig_ftqp}
\end{figure}

\begin{proof}
  Let $a(t)$ be as in Corollary \ref{cor:uniformdiv}. Choose $t_0$
  such that for all $t \ge t_0$ we have $2 d(p,q) \le a(t)$. Define
  $$
  t(p,q) = \max \{ d(p,q) +1, t_0 \}.
  $$
  In particular, $q$ lies in the ball of radius $t$ around $p$, for
  all $t \ge t(p,q)$, and hence for all $v \in S_qX$ the geodesic ray
  $c_v: [0, \infty) \to X$ intersects $S_t(p)$. Let $t \ge t(p,q)$,
  and assume that $q' = c_v(t_1)$ is an intersection point of
  $c_v([0,\infty))$ and $S_t(p)$ such that $c_v'(t_1)$ is either
  pointing into $B_t(p)$ or is tangent to $S_t(p)$, i.e., 
  $$
  \angle (c_v'(t_1), c_w'(t)) \ge \pi/2,
  $$
  where $w \in S_pX$ is the unique vector such that $c_w(t) = q'$. Using
  the triangle inequality we obtain
  $$
  t- d(p,q) \le t_1 \le t + d(p,q)
  $$
  Using Corollary \ref{cor:uniformdiv}, we obtain for all $s \ge 0$
  $$
  d(c_v(t_1-s), c_w(t-s)) \ge a(s) \pi/2.
  $$
  In particular for $s =t$ this yields 
  $$
  a(t) \pi/2 \le d(c_v(t_1-t), p) \le d(c_v(t_1-t),q ) + d(q,p) \le 2
  d(p,q) \le a(t),
  $$
  which is a contradiction. Hence, a second intersection point between
  the geodesic ray $c_v([0,\infty))$ and $S_t(p)$ cannot occur.
\end{proof}

\begin{prop}
  Let $(X,g)$ be a complete, simply connected noncompact manifold
  without conjugate points and $p,q \in X$. Consider the map $F_t: S_q
  X \to S_t(p)$, where $F_t(v)$ is the first intersection point of the
  geodesic ray $c_v: [0,\infty) \to X$ with $S_t(p)$. If $q$ is
  contained in the ball of radius $t$ about $p$, this map is well
  defined. Then the Jacobian of $F_t$ is given by
  \begin{equation} \label{eq:jacFt}
  \Jac F_t(v) = \frac{\det A_v(d(q,F_t(v)))}{\langle N_p(F_t(v)),N_q(F_t(v)) 
  \rangle},
  \end{equation}
  where $N_x(y) = (\grad d_x)(y)$ and $d_x(y) = d(x,y)$.
\end{prop}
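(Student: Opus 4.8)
The plan is to compute $\Jac F_t$ by parametrising the sphere $S_t(p)$ two ways and comparing the induced volume forms. First I would set up local coordinates: fix $v \in S_qX$, let $y = F_t(v)$ be the first intersection point of $c_v$ with $S_t(p)$, write $\tau = d(q,y)$ for the arclength at which the geodesic $c_v$ hits $S_t(p)$, and let $w \in S_pX$ be the unit vector with $c_w(t)=y$, so that $N_q(y) = c_v'(\tau)$ and $N_p(y) = c_w'(t)$ are the two outward radial directions at $y$. The idea is to factor $F_t$ through the map $G: S_qX \to X$, $G(v) = \exp_q(\tau(v) v) = c_v(\tau(v))$, whose image is $S_t(p)$, and then read off how the volume element of $S_qX$ is transported.

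Next I would differentiate. For a variation $v(s)$ of $v$ in $S_qX$ with $v'(0) \perp v$, the derivative of $G$ is
\begin{equation*}
 dG(v'(0)) = \tau'(0)\, c_v'(\tau) + A_v(\tau)(v'(0)),
\end{equation*}
exactly as in the computations in Corollary \ref{cor:uniformdiv} and Proposition \ref{prop:inexdist}, where the first term is the radial (Gauss lemma) part and the second, $A_v(\tau)(v'(0))$, is the orthogonal Jacobi part lying in $c_v'(\tau)^\perp = N_q(y)^\perp$. The tangent space $T_y S_t(p)$ is $N_p(y)^\perp = c_w'(t)^\perp$. So the derivative of $F_t$ is obtained from $dG$ by projecting onto $T_y S_t(p)$, i.e. $dF_t(v'(0)) = \pr_{N_p(y)^\perp}\bigl(A_v(\tau)(v'(0))\bigr)$; the radial term $\tau' c_v'(\tau)$ contributes nothing extra once one accounts for the constraint $G(v(s)) \in S_t(p)$. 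The Jacobian of $F_t$ relative to the orthonormal frames of $T_vS_qX = v^\perp$ and $T_yS_t(p) = N_p(y)^\perp$ is then $\det$ of the composite $v^\perp \xrightarrow{A_v(\tau)} N_q(y)^\perp \xrightarrow{\pr} N_p(y)^\perp$.

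The core linear-algebra step is to evaluate that determinant. The map $A_v(\tau)\colon v^\perp \to N_q(y)^\perp$ has determinant $\det A_v(\tau)$ (both spaces being $(n-1)$-dimensional, with $\tau = d(q,y)$). The orthogonal projection $\pr\colon N_q(y)^\perp \to N_p(y)^\perp$ of one hyperplane onto another, with unit normals $N_q(y)$ and $N_p(y)$, has determinant equal to the cosine of the angle between these normals, namely $\langle N_p(y), N_q(y)\rangle$; this is the standard fact that projecting between two hyperplanes scales volume by the cosine of the dihedral angle. Multiplying gives $\Jac F_t(v) = \det A_v(\tau)\,\langle N_p(y),N_q(y)\rangle$. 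The main obstacle is the bookkeeping with the factor $\langle N_p,N_q\rangle$: one must verify carefully that $F_t$ is the projection of $G$ and that the $\tau'$-term drops out, and determine whether the cosine enters in the numerator or — as in the stated formula — the denominator. The placement in the denominator indicates that one should instead view $F_t^{-1}$ as the orthogonal (nearest-point, radial) correction bringing $S_t(p)$ back to the spray, or equivalently that the natural decomposition is of $G$ rather than $F_t$ directly; reconciling the direction of the projection so that $\langle N_p,N_q\rangle$ appears in the denominator is the delicate point, and I would resolve it by computing the inverse Jacobian $\Jac(F_t^{-1})$ where the projection naturally produces a factor $\langle N_p,N_q\rangle^{-1}$ built from the radial reparametrisation, then inverting.
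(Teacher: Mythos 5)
Your setup coincides with the paper's (differentiate the radial representation $F_t(\gamma(s)) = \exp_q\bigl(\tau(\gamma(s))\,\gamma(s)\bigr)$ and split, via the Gauss lemma, $dF_t(w) = \tau'(0)\,c_v'(\tau) + A_v(\tau)(w)$), but your key linear-algebra step is wrong as written, and you noticed the symptom without curing it. The claimed identity $dF_t(w) = \pr_{N_p(y)^\perp}\bigl(A_v(\tau)(w)\bigr)$ is false: $dF_t(w)$ already lies in $T_yS_t(p) = N_p(y)^\perp$, so there is nothing to project, and applying $\pr_{N_p(y)^\perp}$ to the decomposition $dF_t(w) = \tau'(0)\,N_q(y) + A_v(\tau)(w)$ leaves the in general nonzero term $\tau'(0)\,\pr_{N_p(y)^\perp}N_q(y)$; the radial part does \emph{not} drop out when you project in this direction. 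That faulty identity is precisely what generates your numerator/denominator ambiguity, since it would yield $\Jac F_t(v) = \det A_v(\tau)\,\langle N_p(y),N_q(y)\rangle$, contradicting the stated formula.

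The correct observation --- and the paper's actual argument --- is that projecting in the \emph{opposite} direction annihilates the radial term: since $c_v'(\tau) = N_q(y)$ and $A_v(\tau)(w) \perp N_q(y)$, the orthogonal projection of $dF_t(w)$ onto $N_q(y)^\perp$ equals exactly $A_v(\tau)(w)$. Concretely, with $L_y : N_p(y)^\perp \to N_q(y)^\perp$, $L_y(u) = u - \langle u, N_q(y)\rangle N_q(y)$, one has $L_y \circ dF_t(v) = A_v(\tau)$; here one also needs the first-variation identity $\tau'(0) = \langle N_q(y), dF_t(w)\rangle$ (chain rule applied to $\tau = d_q \circ F_t$), which you use implicitly but never verify. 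Your hyperplane-projection fact is correct, but it must be applied to $L_y$ itself: $\Jac L_y = |\langle N_p(y), N_q(y)\rangle|$, whence multiplicativity of Jacobians gives $\Jac F_t(v) = \det A_v(\tau)/\langle N_p(y),N_q(y)\rangle$, with the cosine in the denominator as stated. Your fallback plan of computing $\Jac(F_t^{-1})$ and inverting is never carried out and becomes unnecessary once the direction of the projection is corrected.
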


Note that \eqref{eq:jacFt} agrees with \cite[(6.3)]{CaSam}. For
convenience of the readers, we provide our own proof of this formula.

\begin{proof}
  Choose a curve $\gamma : (- \epsilon, \epsilon) \to S_q X$ with
  $\gamma(0) = v \in S_q X$. Then
  $$
  F_t(\gamma(s)) = \exp_q (d(q, F_t(\gamma(s))) \cdot \gamma(s)),
  $$
  and, using the chain rule and the product rule,
  \begin{multline*}
    D F_t(v)(\gamma'(0)) = \\
    D \exp_q (d(q,F_t(v)) \cdot v)(\langle N_q(F_t(v)), D F_t(v)
    \gamma'(0) \rangle v + d(q,F_\gamma(v)) \cdot
    \gamma'(0)).
  \end{multline*}
  Note that $\gamma'(0) \perp v$. We have
  $$
  D \exp_q(t v)(t w) = Y(t)(w) = J(t),
  $$
  where $Y$ is the Jacobi tensor along $c_v$ with $Y(0) = 0$ and
  $Y'(0) = \id$, and therefore $J$ is a Jacobi field along $c$
  satisfying $J(0) =0$ and $J'(0) =w$. Note that $Y$ and $A_v$ are
  related by $A_v = Y \Big\vert_{(c_v')^\perp}$. In particular, we
  have $D \exp_q(tv)(tv) = t c_v'(t)$. This yields
  \begin{multline*}
  $$ D F_t(v)(\gamma'(0)) \\ 
  =  \langle N_q(F_t(v)), D F_t(v) \gamma'(0) \rangle\; 
    c_v' (d(q,F_t(v))) + A_v(d(q,F_t(v)))( \gamma'(0)).
  \end{multline*}
 
  Consequently,
  \begin{multline} \label{eq:jacid1}
  D F_t(v) (\gamma'(0)) = \\
  \langle N_q(F_t(v)), D F_t(v) \gamma'(0) \rangle N_q(F_t(v)) + 
  A_v(d(q,F_t(v))) (\gamma'(0)).
  \end{multline}
  Next, we introduce the map
  \begin{eqnarray*}
  L_x: N_p(x)^\perp &\to& N_q(x)^\perp,\\
  L_x(w) &=& w - \langle w, N_q(x) \rangle N_q(x).
  \end{eqnarray*}
  Then \eqref{eq:jacid1} can be rewritten as 
  \begin{equation}  \label{eq:jacid2}
  L_{F_t(v)} \circ DF_t(v) = A_v(d(q,F_t(v))). 
  \end{equation}

  To finish the proof of the above Proposition, we need the following
  lemma.

  \begin{lemma}
    $\Jac L_x = | \langle N_p(x), N_q(x) \rangle |$.
  \end{lemma}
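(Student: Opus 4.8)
The plan is to recognise $L_x$ as the restriction to the hyperplane $N_p(x)^\perp$ of the orthogonal projection of $T_xX$ onto the hyperplane $N_q(x)^\perp$, and then to reduce the determinant computation to a two-dimensional one. Write $a = N_p(x)$ and $b = N_q(x)$; these are unit vectors, and $L_x(w) = w - \langle w, b\rangle b$ is exactly the orthogonal projection onto $b^\perp = N_q(x)^\perp$, so that $L_x$ indeed maps $a^\perp$ into $b^\perp$. I want to show $|\det L_x| = |\langle a, b\rangle|$, where the determinant is taken with respect to the inner products induced on the two $(n-1)$-dimensional spaces $a^\perp$ and $b^\perp$.

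If $a$ and $b$ are parallel, then $a^\perp = b^\perp$ and $L_x$ is the identity, while $|\langle a,b\rangle| = 1$, so the identity holds. Otherwise, set $\Pi = \Span\{a,b\}$, a $2$-plane, and split $T_xX = \Pi \oplus \Pi^\perp$ orthogonally. Every $w \in \Pi^\perp$ is orthogonal to both $a$ and $b$, so $\Pi^\perp \subset a^\perp \cap b^\perp$ and $L_x$ restricts to the identity on $\Pi^\perp$. Correspondingly, the domain splits as $a^\perp = (a^\perp \cap \Pi) \oplus \Pi^\perp$ and the target as $b^\perp = (b^\perp \cap \Pi) \oplus \Pi^\perp$, where $a^\perp \cap \Pi$ and $b^\perp \cap \Pi$ are the $1$-dimensional lines in $\Pi$ orthogonal to $a$ and to $b$, respectively.

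Choosing unit spanning vectors $u$ for $a^\perp \cap \Pi$ and $u'$ for $b^\perp \cap \Pi$, a short computation inside the $2$-plane $\Pi$ (writing $a = (1,0)$ and $b = (\cos\theta, \sin\theta)$ with $\cos\theta = \langle a, b\rangle$, so that $u = (0,1)$ and $u' = (-\sin\theta, \cos\theta)$) gives $L_x(u) = \langle L_x(u), u'\rangle\, u' = \cos\theta\, u'$, since $L_x(u)$ lies in $b^\perp \cap \Pi = \R u'$ by construction. Hence, in the adapted orthonormal bases $\{u\} \cup E$ of $a^\perp$ and $\{u'\} \cup E$ of $b^\perp$, where $E$ is any fixed orthonormal basis of $\Pi^\perp$, the matrix of $L_x$ is the block-diagonal matrix $\mathrm{diag}(\cos\theta, 1, \dots, 1)$. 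Therefore $\Jac L_x = |\det L_x| = |\cos\theta| = |\langle N_p(x), N_q(x)\rangle|$, as claimed.

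There is no serious obstacle here: the statement is a standard fact about orthogonal projection between two hyperplanes. The only point requiring care is to perform the determinant computation with respect to the \emph{induced} metrics, which is why one adapts the orthonormal bases to the orthogonal splitting $T_xX = \Pi \oplus \Pi^\perp$; the factor $\Pi^\perp$ contributes trivially as an identity block, and the whole Jacobian collapses to the single cosine coming from the $2$-plane $\Pi$.
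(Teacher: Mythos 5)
Your proof is correct and is essentially the paper's argument in different clothing: the paper also splits off the codimension-one subspace $N_p(x)^\perp \cap N_q(x)^\perp$ (your $\Pi^\perp$), on which $L_x$ is the identity, and computes the stretch factor on the remaining line via the vector $w_0 = N_q(x) - \langle N_q(x), N_p(x)\rangle N_p(x)$, which is exactly a scalar multiple of your $u$. Your trigonometric parametrisation inside the $2$-plane $\Pi$ and your explicit treatment of the degenerate parallel case are only cosmetic refinements of the same computation.
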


  \begin{proof}
    Consider
    $$
    N_p(x)^\perp \cap N_q(x)^\perp = \{w \in T_x X \mid\; 
    \langle w,N_p(x) \rangle = 0 \; \; \text{and} \; \; 
    \langle w,N_q(x) \rangle = 0 \}.
    $$
    Then $N_p(x)^\perp \cap N_q(x)^\perp$ has co-dimension one in
    $N_p(x)^\perp$ and $L_x$ is the identity on $N_p(x)^\perp \cap
    N_q(x)^\perp$. Let
    $$
    w_0 = N_q(x) - \langle N_q(x), N_p(x) \rangle N_p(x) \in N_p(x)^\perp.
    $$
    The vector $w_0$ is orthogonal to $N_p(x)^\perp \cap
    N_q(x)^\perp$ since for all $w \in N_p(x)^\perp \cap
    N_q(x)^\perp$ we have $\langle w,N_p(x) \rangle = 0$ and $\langle
    w,N_q(x) \rangle = 0$, and therefore
    $$
    \langle w,w_0 \rangle = \langle \underbrace{w,N_q(x)}_{= 0} \rangle - 
    \langle N_q(x), N_p(x) \rangle \langle 
    \underbrace{w, N_p(x)}_{= 0} \rangle = 0.
    $$
    Moreover, $L_x w_0$ is also orthogonal to $N_p(x)^\perp \cap
    N_q(x)^\perp$:
    \begin{eqnarray*}
      L_x w_0 &=& w_0 - \langle w_0, N_q(x) \rangle N_q(x)\\
      &=& \langle N_p(x), N_q(x) \rangle (\langle N_p(x), N_q(x) \rangle N_q(x) 
      - N_p(x)),
    \end{eqnarray*}
    and consequently $\langle w,L_x w_0 \rangle = 0$ for all $w$
    satisfying $\langle w,N_p(x) \rangle= \langle w,N_q(x) \rangle =
    0$. This shows that
    $$
    \Jac L_x = \frac{||L_x w_0||}{||w_0||}.
    $$
    Since
    $$ \|L_x w_0\|^2 = \langle N_p(x),N_q(x) \rangle^2 
    (1 - \langle N_p(x),N_q(x) \rangle^2) $$
    and
    $$ \|w_0\|^2 = 1 - \langle N_p(x),N_q(x) \rangle^2, $$
    we obtain
    \begin{eqnarray*}
    \Jac L_x &= &\left( \frac{\langle N_p(x),N_q(x) \rangle^2 
    (1 - \langle N_p(x),N_q(x) \rangle^2)}{1 - \langle N_p(x),N_q(x) \rangle^2} 
    \right)^{1/2}\\
    &= &| \langle N_p(x),N_q(x) \rangle |,
    \end{eqnarray*}
    which yields the lemma.
  \end{proof}

  Finally, \eqref{eq:jacid2} implies that
  $$
  \Jac F_t(v) = \frac{\det A_v(d(q,F_t(v)))}{\Jac\; L_{F_t}(v)} =
  \frac{\det A_v (d(q,F_t(v)))}{\langle N_p(F_t(v)),N_q(F_t(v))
    \rangle},
  $$
  finishing the proof of the proposition.
\end{proof}

\begin{figure}[h]
  \begin{center}
    \psfrag{SqX}{$S_qX$} 
    \psfrag{q}{$q$}
    \psfrag{v}{$v$}
    \psfrag{Ft(v)}{$F_t(v)$}
    \psfrag{St(p)}{$S_t(p)$}
    \psfrag{t}{$t$}
    \psfrag{p}{$p$}
    \psfrag{Bt(v)}{$B_t(v)$}
    \includegraphics[width=8cm]{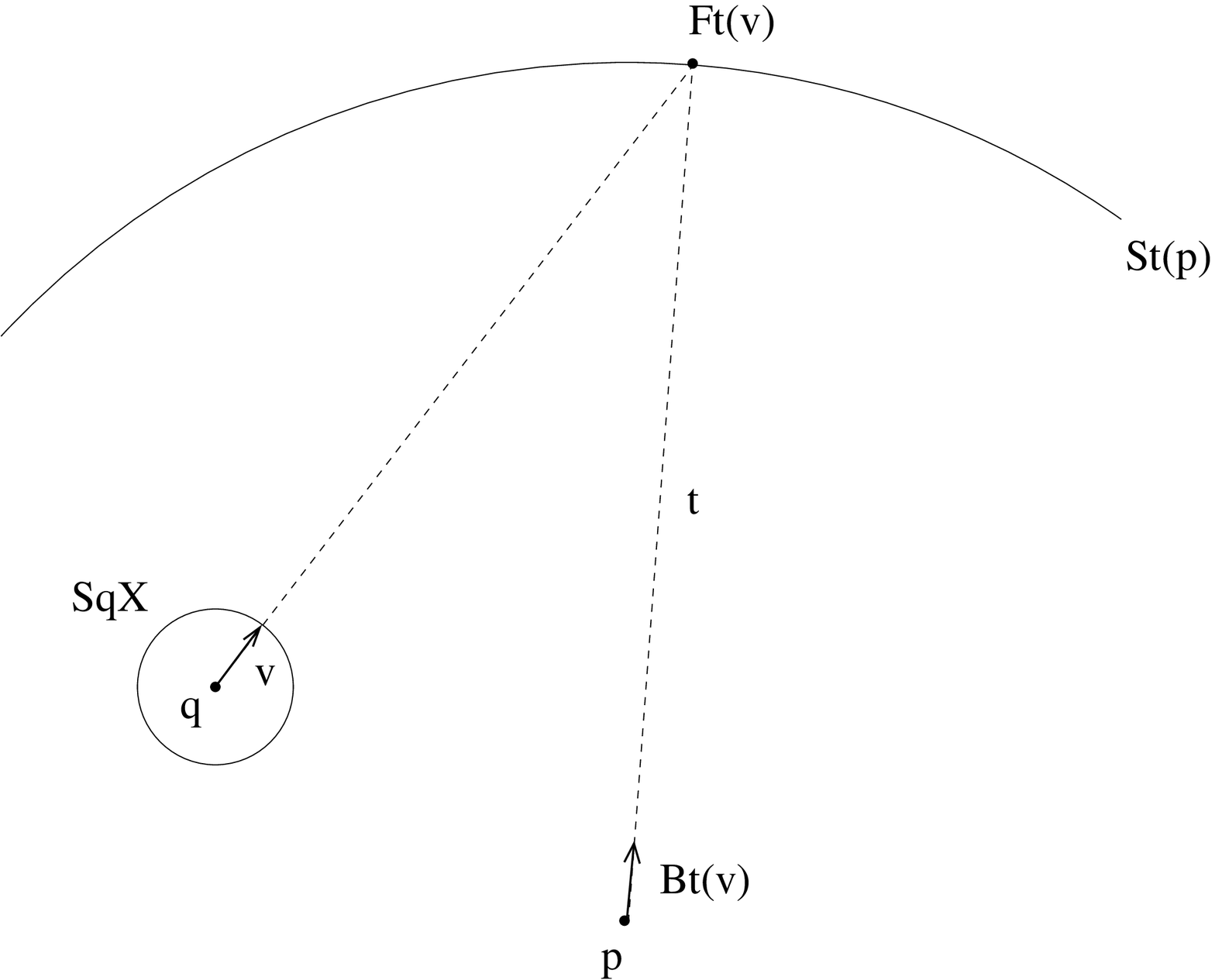}
  \end{center}
  \caption{Illustration of the map $B_t: S_qX \to S_pX$}
  \label{fig_btv}
\end{figure}

\begin{cor} \label{cor:JacBt} Let $(X,g)$ be a complete, simply
  connected noncompact manifold without conjugate points and $p,q \in
  X$. Let $B_t: S_q X \to S_p X, v \mapsto \frac{1}{t} \exp_p^{-1}
  \circ F_t(v)$ (see Figure \ref{fig_btv}). Then we have
  $$ \Jac B_t(v) = \frac{\det A_v(d(q,F_t(v)))}{\det A_u(t)} 
  \cdot \frac{1}{\langle N_p(F_t(v)),N_q(F_t(v)) \rangle},
  $$
  where $u = B_t(v)$.
\end{cor}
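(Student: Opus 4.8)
The plan is to realize $B_t$ as a composition and reduce to the Jacobian formula \eqref{eq:jacFt} for $F_t$ that was just established, together with a separate computation of the Jacobian of the (rescaled) inverse exponential map. First I would write $B_t = G_t \circ F_t$, where $G_t \colon S_t(p) \to S_pX$ is defined by $G_t(x) = \frac{1}{t} \exp_p^{-1}(x)$, so that the chain rule for Jacobians gives
$$ \Jac B_t(v) = \Jac G_t(F_t(v)) \cdot \Jac F_t(v). $$
Since $\Jac F_t(v)$ is supplied by the preceding proposition, the only new quantity to determine is $\Jac G_t$ evaluated at the point $x = F_t(v)$.

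To compute $\Jac G_t(x)$, I would pass to the inverse map $E_t = G_t^{-1} \colon S_pX \to S_t(p)$, $E_t(u) = \exp_p(tu) = c_u(t)$. Differentiating along a curve in $S_pX$ through $u$ with velocity $w \in u^\perp$ and using the same Jacobi tensor identity $D\exp_p(tu)(tw) = Y(t)(w)$ with $A_u = Y\vert_{(c_u')^\perp}$ as in the previous proof, one finds $DE_t(u)(w) = A_u(t)(w)$. Because $c_u'(t)$ is the outward unit normal $N_p(c_u(t))$ of the sphere $S_t(p)$ at $c_u(t)$, its tangent space there is exactly $(c_u'(t))^\perp = N_p(c_u(t))^\perp$, and $A_u(t)$ maps $u^\perp$ onto this space. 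Hence $\Jac E_t(u) = \det A_u(t)$ (which is positive for $t > 0$, as $X$ has no conjugate points), and therefore $\Jac G_t(x) = 1/\det A_{G_t(x)}(t)$.

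Finally, setting $u = G_t(F_t(v)) = B_t(v)$ and combining the two factors yields
$$ \Jac B_t(v) = \frac{1}{\det A_u(t)} \cdot \frac{\det A_v(d(q,F_t(v)))}{\langle N_p(F_t(v)),N_q(F_t(v)) \rangle}, $$
which is precisely the claimed identity. I do not expect a genuine obstacle here: the computation is routine once the factorization is in place, and the only point demanding care is the bookkeeping of domain and target tangent spaces for $DE_t(u)$ — confirming that $A_u(t)$ really carries $u^\perp$ isomorphically onto the tangent space $N_p(c_u(t))^\perp$ of $S_t(p)$ — which, as noted, follows immediately from $c_u'(t) = N_p(c_u(t))$.
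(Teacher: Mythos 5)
Your proposal is correct and follows essentially the same route as the paper: the paper likewise writes $\Jac B_t(v) = \frac{1}{\det A_u(t)} \Jac F_t(v)$ by observing that the differential of $u \mapsto \exp_p(tu)$ from $S_pX$ to $S_t(p)$ is given by the Jacobi tensor $A_u(t)$ on $u^\perp$, and then invokes the formula for $\Jac F_t$ from the preceding proposition. Your extra remarks (positivity of $\det A_u(t)$ from the absence of conjugate points, and the identification of $T_{c_u(t)}S_t(p)$ with $N_p(c_u(t))^\perp$) are correct bookkeeping that the paper leaves implicit.
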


\begin{proof}
  Let $u \in S_p X$. Then $D \exp_p(tu) : u^\perp \to T_{\exp_p(tu)}
  S_t(p)$ is given by $D \exp_p(tu) (w ) = \frac{1}{t} A_u(t)(w)$, and
  therefore with $u = B_t(v)$,
  \begin{eqnarray*}
  \Jac B_t(v) &=& \frac{1}{\det A_u(t)} \cdot \Jac F_t(v)\\ 
  &=& \frac{\det A_v(d(q,F_t(v)))}{\det A_u(t)} \cdot 
  \frac{1}{\langle N_p(F_t(v)),N_q(F_t(v)) \rangle}.
  \end{eqnarray*}
 \end{proof}

 From now on, $(X,g)$ denotes a rank one asymptotically harmonic
 manifold satisfying \eqref{eq:curvboundcond} with $n={\rm
   dim}(X)$. Let $f \in C(X(\infty))$. We know from Lemma
 \ref{lem:bijvismap} that $B_t: S_qX \to S_pX$ is a bijection, for $t
 > 0$ large enough. Then we have with $f_1 = f \circ \varphi_p$:
\begin{eqnarray*}
  \int_{X(\infty)} f(\xi)\, d\mu_p(\xi) &=& 
  \frac{1}{\omega_n} \int_{S_pX} f_1(w)\; d\theta_p(w) \\
  &=& \frac{1}{\omega_n} \int_{S_qX} (f_1 \circ B_t)(v) (\Jac B_t)(v)\;
  d\theta_q(v).
\end{eqnarray*}
We will show that 
\begin{itemize}
\item[(i)] $\lim_{t \to \infty} B_t = (\varphi_p)^{-1} \circ
  \varphi_q$,
\item[(ii)] There exist constants $t_0 > 0$ and $C > 0$ such that
$$| \Jac B_t(v) | \le C \quad \forall\, v \in S_qX, \, t \ge t_0. $$  
\item[(iii)] We have, for all $v \in S_qX$,
$$ \lim_{t \to \infty} \Jac B_t(v) = e^{-h b_v(p)}, $$
\end{itemize}
where $b_v$ is the Busemann function introduced in
\eqref{eq:Busemann}. Having these facts, we conclude with Lebesgue's
dominated convergence that
\begin{eqnarray*}
  \int_{X(\infty)} f(\xi)\, d\mu_p(\xi) &=& 
  \lim_{t \to \infty} \frac{1}{\omega_n} \int_{S_qX} (f_1 \circ B_t)(v) 
  (\Jac B_t)(v)\; d\theta_q(v) \\
  &=& \frac{1}{\omega_n} \int_{S_qX} (f \circ \varphi_q)(v) e^{-h b_v(p)}
  \; d\theta_q(v) \\
  &=& \int_{X(\infty)} f(\xi) e^{-h b_{q,\xi}(p)}\, d\mu_q(\xi),  
\end{eqnarray*} 
with $b_{q,\xi} = b_v$ with $\xi = c_v(\infty)$ and $v \in S_qX$. This proves
Theorem \ref{thm:radon-nykodym} from the Introduction:

\begin{thm1}  Let $(X,g)$  be a  rank one  asymptotically harmonic
  manifold  satisfying \eqref{eq:curvboundcond}.  Let $(\mu_p)_{p  \in
    X}$ be  the associated family  of visibility measures.  Then these
  measures are pairwise absolutely continuous and we have
  $$ \frac{d\mu_p}{d\mu_q}(\xi) = e^{-h b_{q,\xi}(p)}. $$
\end{thm1}

It remains to prove properties (i), (ii) and (iii) listed above.

\bigskip

{\bf Proof of (i):} Let $t_n \to \infty$ and $s_n \ge 0$, $w_n =
B_{t_n}(v) \in S_pX$ such that $y_n = \exp_q(s_n v) = \exp_p(t_n
w_n)$. We obviously have $s_n \to \infty$ and $y_n \to
\varphi_q(v)$. Let $w_{n_j}$ be a convergent subsequence of $w_n =
B_{t_n}(v)$ with limit $w \in S_pX$. Then we have $y_{n_j} \to
\varphi_p(w)$ and
$$ \varphi_q(v) = \varphi_p(w). $$
This shows that $\lim_{n \to \infty} B_{t_n}(v) = (\varphi_p)^{-1}
\circ \varphi_q(v)$. \qed

\bigskip

For the proof of (ii), we need the following lemma:

\begin{lemma} \label{lem:NpNq} 
  For every $\epsilon > 0$, there exists $t_0 > 0$ such that we have
  for all $v \in S_qX$
  $$
  |\langle N_p(F_t(v)),N_q(F_t(v)) \rangle - 1| < \epsilon \quad
  \forall\; t \ge t_0.
  $$
\end{lemma}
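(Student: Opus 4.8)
The plan is to read off the inner product as a cosine and then bound the corresponding angle using the uniform divergence of geodesics. Write $x = F_t(v)$. Since distance functions have unit gradient, $N_p(x)$ and $N_q(x)$ are unit vectors, so $\langle N_p(x), N_q(x)\rangle = \cos\theta_t$, where $\theta_t := \angle\bigl(N_p(x), N_q(x)\bigr)$. Thus it suffices to show that $\theta_t \to 0$ \emph{uniformly} in $v \in S_qX$ as $t \to \infty$, since then $\cos\theta_t \to 1$ uniformly and the claim follows because $|\langle N_p(x),N_q(x)\rangle - 1| = 1-\cos\theta_t$.

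First I identify the two radial directions at $x$. As $x$ lies on $c_v$ at parameter $s := d(q, x)$, and $\grad d_q$ along $c_v$ is the outward velocity, we have $N_q(x) = c_v'(s)$; likewise $N_p(x)$ is the outward velocity at $x$ of the minimizing geodesic from $p$ to $x$. The reverse geodesics emanating from $x$ toward $q$ and toward $p$ therefore have initial directions $-N_q(x), -N_p(x) \in S_xX$, and since reversing both vectors preserves the angle between them, the angle between these two reverse geodesics at $x$ is exactly $\theta_t$.

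Next I apply Corollary \ref{cor:uniformdiv} at the base point $x$ to the unit vectors $-N_p(x)$ and $-N_q(x)$: for every $\tau \ge 0$,
$$ d\bigl(c_{-N_p(x)}(\tau), c_{-N_q(x)}(\tau)\bigr) \ge a(\tau)\, \theta_t. $$
I bound the left-hand side from above by taking $\tau = \min(s,t)$. The reverse geodesic toward $p$ reaches $p$ at time $t$, so $c_{-N_p(x)}(\tau)$ lies within distance $t-\tau$ of $p$; similarly $c_{-N_q(x)}(\tau)$ lies within distance $s-\tau$ of $q$. By the triangle inequality and $(t-\tau)+(s-\tau) = |t-s| \le d(p,q)$ (the latter from the triangle $p,q,x$), the left-hand side is at most $(t-\tau) + d(p,q) + (s-\tau) \le 2\,d(p,q)$, giving $\theta_t \le 2\,d(p,q)/a(\tau)$.

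Finally, the triangle inequality forces $s \ge t - d(p,q)$, hence $\tau = \min(s,t) \ge t - d(p,q) \to \infty$, and since $a(\tau)\to\infty$ by Corollary \ref{cor:uniformdiv}, the bound $2\,d(p,q)/a(\tau)$ tends to $0$ as $t\to\infty$. The only point requiring care is uniformity in $v$, which I expect to be the main (though minor) obstacle: the estimate must depend on $t$ and $d(p,q)$ alone. This holds because the divergence function $a$ in Corollary \ref{cor:uniformdiv} is independent of the base point $x$, and the sole $v$-dependence enters through $s = d(q, F_t(v))$, which is pinned down uniformly by $t - d(p,q) \le s \le t + d(p,q)$; choosing $t_0$ so large that $2\,d(p,q)/a(\tau) < 1-\cos^{-1}\!\text{-threshold}$ for all $\tau \ge t_0 - d(p,q)$ then yields the lemma for the prescribed $\epsilon$.
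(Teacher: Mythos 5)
Your proof is correct and follows essentially the same route as the paper, which simply observes that the lemma is an easy consequence of Corollary \ref{cor:uniformdiv}: you apply that corollary at the base point $x = F_t(v)$ to the reversed radial directions $-N_p(x)$, $-N_q(x)$, bound the divergence at time $\tau = \min(s,t)$ by $2\,d(p,q)$, and note that $\tau \ge t - d(p,q)$ gives uniformity in $v$, exactly the intended argument spelled out in full. The only blemish is the garbled phrase ``$1-\cos^{-1}$-threshold'' at the end, which should just say: given $\epsilon > 0$ pick $\delta > 0$ with $1-\cos\delta < \epsilon$ and then $t_0$ so large that $2\,d(p,q)/a(\tau) < \delta$ for all $\tau \ge t_0 - d(p,q)$.
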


\begin{proof}
  This is an easy consequence of Corollary \ref{cor:uniformdiv}.
\end{proof}

\bigskip

{\bf Proof of (ii):} We start with the formula (see
\cite[p. 676]{Kn2})
$$ \det A_v(t) = \frac{\det U_v(t)}{\det(U_v'(0)-S_{v,t}'(0))} = 
\frac{e^{ht}}{\det(U_v'(0)-S_{v,t}'(0))}. $$ 
Then
$$ \frac{\det A_v(d(q,F_t(v))}{\det A_{u_t}(t)} = 
\frac{e^{h d(q,F_t(v))}\det(U_{u_t'}(0) - S_{u_t,t}'(0))}
{\det(U_v'(0)-S_{v,d(q,F_t(v))}'(0)) e^{ht}}, $$
where $u_t = B_t(v) \in S_pX$.

Let $\epsilon > 0$ be chosen. Since $\det(U_v'(0) - S_{v,t}'(0))$
converges monotonically to a universal constant $A > 0$ (see
\cite[Theorem 1.3]{KnPe2} and use the fact that $X$ is rank one), we
conclude with Dini that the convergence is uniformly on compact sets.
Therefore, there exists $t_0 \ge 0$ such that $A \le \det (U_w'(0) -
S_{w,t}'(0)) \le A + \epsilon$ for all $w \in S_pX \cup S_qX$ and $t
\ge t_0$. Using Lemma \ref{lem:NpNq} and increasing $t_0 > 0$, if
necesary, we can also assume that
$$ \langle N_p(F_t(v)),N_q(F_t(v)) \rangle \ge \frac{1}{2} $$
for all $t \ge t_0$. Since $d(q,F_t(v)) \le t + d(p,q)$, we conclude
from Corollary \ref{cor:JacBt} for all $t \ge t_0$ and all $v \in
S_qX$,
$$ \hspace{4cm} |\Jac B_t(v)| \le 2 \frac{A+\epsilon}{A} e^{h d(p,q)}. 
\hspace{3.3cm} \qed $$ 

\bigskip

{\bf Proof of (iii):} This is a immediate consequence of Lemma
\ref{lem:NpNq} and the following Lemma:

\begin{lemma}
  Using the notation above we have that
  $$
  \lim_{t \to \infty} \frac{\det A_v(d(q,F_t(v)))}{\det A_{u_t}(t)} = e^{-hb_v(p)},
  $$
  where $u_t = B_t(v)$.
\end{lemma}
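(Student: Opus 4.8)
The plan is to compute the limit of the ratio $\frac{\det A_v(d(q,F_t(v)))}{\det A_{u_t}(t)}$ using the explicit formula for $\det A_v(t)$ already established in the proof of (ii), namely $\det A_v(t) = e^{ht}/\det(U_v'(0)-S_{v,t}'(0))$. Substituting this formula for both numerator and denominator, the ratio becomes
$$
\frac{\det A_v(d(q,F_t(v)))}{\det A_{u_t}(t)} = e^{h(d(q,F_t(v))-t)} \cdot \frac{\det(U_{u_t}'(0)-S_{u_t,t}'(0))}{\det(U_v'(0)-S_{v,d(q,F_t(v))}'(0))}.
$$
So the task splits into controlling the exponential factor and the ratio of determinants separately.

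First I would handle the determinant quotient. As noted in the proof of (ii), the quantity $\det(U_w'(0)-S_{w,t}'(0))$ converges, as $t\to\infty$, monotonically to a universal constant $A>0$ (by \cite[Theorem 1.3]{KnPe2} together with the rank one hypothesis), and by Dini's theorem this convergence is uniform on the compact sets $S_pX$ and $S_qX$. Since $d(q,F_t(v)) \to \infty$ as $t \to \infty$ (the intersection point $F_t(v)$ recedes to infinity), the denominator $\det(U_v'(0)-S_{v,d(q,F_t(v))}'(0)) \to A$. For the numerator I would use $u_t = B_t(v) \in S_pX$; regardless of how $u_t$ varies, the uniform convergence on $S_pX$ forces $\det(U_{u_t}'(0)-S_{u_t,t}'(0)) \to A$ as well. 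Hence the ratio of determinants tends to $A/A = 1$.

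Next I would treat the exponential factor $e^{h(d(q,F_t(v))-t)}$. The key geometric observation is that $F_t(v) = c_v(d(q,F_t(v)))$ lies on the sphere $S_t(p)$, so $d(p,F_t(v)) = t$, and thus $d(q,F_t(v)) - t = d(q,F_t(v)) - d(p,F_t(v))$. Writing $s_t = d(q,F_t(v)) = d(q,c_v(s_t))$, so that $F_t(v) = c_v(s_t)$ with $s_t \to \infty$, this difference is exactly $d(q,c_v(s_t)) - d(p,c_v(s_t))$. Since $q$ is the footpoint of $v \in S_qX$, we have $d(q,c_v(s_t)) = s_t$, so the exponent reads $h(s_t - d(p,c_v(s_t)))$. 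By the very definition of the Busemann function \eqref{eq:Busemann}, $b_v(p) = \lim_{s\to\infty}(d(c_v(s),p)-s)$, whence $s_t - d(c_v(s_t),p) \to -b_v(p)$, and therefore $e^{h(d(q,F_t(v))-t)} \to e^{-h b_v(p)}$.

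Combining the two limits gives $\lim_{t\to\infty}\frac{\det A_v(d(q,F_t(v)))}{\det A_{u_t}(t)} = e^{-hb_v(p)}\cdot 1 = e^{-hb_v(p)}$, as claimed. The main obstacle I anticipate is the numerator determinant: one must argue that $\det(U_{u_t}'(0)-S_{u_t,t}'(0)) \to A$ even though the base vector $u_t$ itself moves with $t$. This is precisely where the uniformity from Dini's theorem on the compact sphere $S_pX$ is essential, rather than mere pointwise convergence at a fixed vector; I would make sure to invoke that uniformity explicitly. The exponential factor, by contrast, is a clean application of the Busemann function's definition once one recognizes that $d(p,F_t(v))=t$ converts the expression into the standard Busemann limit.
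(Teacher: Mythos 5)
Your proposal is correct and follows essentially the same route as the paper's proof: the same substitution of $\det A_w(t) = e^{ht}/\det(U_w'(0)-S_{w,t}'(0))$, the same appeal to the uniform (Dini) convergence of $\det(U_w'(0)-S_{w,t}'(0)) \to A$ on $S_pX \cup S_qX$ to handle the moving vector $u_t$, and the same identity $d(q,F_t(v))-t = d(q,c_v(s))-d(p,c_v(s)) = s - d(p,c_v(s)) \to -b_v(p)$ via the definition of the Busemann function. Your explicit emphasis on why uniformity is needed for the $u_t$-dependence is a point the paper leaves implicit, but the argument is the same.
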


\begin{proof}
  We recall that
  $$ \frac{\det A_v(d(q,F_t(v)))}{\det A_{u_t}(t)} = 
  e^{h (d(q,F_t(v))-t)}\frac{\det(U_{u_t'}(0) -
  S_{u_t,t}'(0))}{\det(U_v'(0)-S_{v,d(q,F_t(v))}'(0))} $$ 
  and
  $$ \frac{\det(U_{u_t'}(0) - S_{u_t,t}'(0))}{\det(U_v'(0)-S_{v,d(q,F_t(v))}'(0))} 
  \to \frac{A}{A} = 1. $$ 
  Now the lemma follows from
  \begin{eqnarray*}
  \lim_{t \to \infty} d(q,F_t(v)) - t &=& 
  \lim_{t \to \infty} d(q,F_t(v)) - d(p,F_t(v)) \\
  &=& \lim_{s \to \infty} d(q,c_v(s)) - d(p,c_v(s)) \\
  &=& \lim_{s \to \infty} s - d(p,c_v(s)) = - b_v(p).
  \end{eqnarray*}
\end{proof}

\begin{remark}
  Theorem \ref{thm:radon-nykodym} has an analogue for simply
  connected, noncompact harmonic manifolds $(X,g)$ without the rank
  one condition and replacing the geometric boundary $X(\infty)$ by
  the Busemann boundary (see \cite[Theorem 12.6]{KnPe1}).  There, we
  have $\det A_v(t) = f(t)$ for all $v \in SX$, where $f(t)$ is the
  volume density function, and $f(t)$ is an exponential
  polynomial. Moreover, the uniform divergence of geodesics (Corollary
  \ref{cor:uniformdiv}) holds there without the rank one
  condition. These results are not known for general asymptotically
  harmonic manifolds.
\end{remark}

\section{Solution of the Dirichlet problem at infinity}
\label{chp:dirichprob}

Since rank one asymptotically harmonic manifolds $(X,g)$ satisfying
\eqref{eq:curvboundcond} are Gromov hyperbolic with positive Cheeger
constant (see \cite{KnPe2}), general results of Ancona yield that the
Martin boundary and the geometric boundary coincide
(\cite[Th{\'e}or{\`e}me 6.2]{Anc2}) and that the Dirichlet problem at
infinity has a solution (\cite[Th{\'e}or{\`e}me 6.7]{Anc2}). In this
section we give an alternative direct proof that the Dirichlet problem
at infinity has a solution for these manifolds by providing a concrete
integral formula of the solution using the visibility
measures. Moreover, this shows that the visibility measures coincide
with the harmonic measures on $X(\infty)$.

A crucial step for our result of this section is to show that $\lim_{x
  \to \xi} \mu_x = \delta_\xi$, where $\delta_\xi$ is the
$\delta$-distribution at $\xi$. This abstract condition will follow
from the next proposition. To state it, we introduce for $v_0 \in
S_pX$ and $\delta > 0$ the cone
$$
C(v_0, \delta) = \{c_v(t) \mid\; t \in [0,\infty],\, \angle(v_0,v) \leq 
\delta\}.
$$
Note that the set of all truncated cones $C(v_0,\delta) \cap B_R(p)^c$
together with all open balls $B_r(q)$ define a basis of the cone
topology of the geometric compactification $\bar X$.
 
\begin{prop} \label{prop:horoincone} Let $(X,g)$ be a rank one
  asymptotically harmonic manifold satisfying
  \eqref{eq:curvboundcond}. Let $p \in X$ and $\delta > 0$. Then there
  exists a constant $C_1 = C_1(\delta) > 0$ such that for all $v \in
  S_pX$
  $$
  b_{v}(q) \geq d(p,q)-C_1 \quad \text{for all $q \in X \backslash 
  C(v, \delta)$.} 
  $$
\end{prop}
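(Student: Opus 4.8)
The plan is to estimate the Busemann function $b_v(q)$ from below when $q$ lies \emph{outside} the cone $C(v,\delta)$, i.e. when the geodesic ray $c_v$ and the point $q$ separate by an angle at least $\delta$ at $p$. The key geometric idea is that, because geodesics diverge uniformly (Corollary~\ref{cor:uniformdiv}), the ray $c_v(t)$ runs away from $q$ almost as fast as it runs away from $p$; the deficit $d(c_v(t),p) - t$ that defines the Busemann function $b_v(p)=0$ must be nearly matched by $d(c_v(t),q)-t$ up to a bounded correction depending only on $\delta$ and the geometry.

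**Main steps.** First I would recall that $b_v(p) = \lim_{t\to\infty}(d(c_v(t),p)-t) = 0$ since $c_v(0)=p$ and $d(c_v(t),p)=t$. The goal is therefore equivalent to showing that $\liminf_{t\to\infty}\bigl(d(c_v(t),q) - d(c_v(t),p)\bigr) \ge d(p,q) - C_1$. Writing $w\in S_pX$ for the unit vector with $c_w(d(p,q)) = q$, the angular separation gives $\angle(v,w)\ge\delta$. The second step is to apply uniform divergence: by Corollary~\ref{cor:uniformdiv}, $d(c_v(s),c_w(s)) \ge a(s)\,\delta$ for all $s\ge 0$, with $a(s)\to\infty$. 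This forces the two rays $c_v$ and $c_w$ to be far apart, so that near the tail of $c_v$ the nearest point of the segment $\sigma_{pq}$ (lying along $c_w$) stays bounded. The third step is a triangle-inequality comparison: since $q$ sits at parameter $d(p,q)$ along $c_w$, and the rays have diverged by at least $a(d(p,q))\delta$ (or more), one obtains $d(c_v(t),q) \ge d(c_v(t),p) + d(p,q) - C_1$ once $t$ is large, where $C_1$ absorbs the bounded correction coming from the turning of the comparison triangle. Choosing $C_1 = C_1(\delta)$ via the inverse of the divergence rate $a$ completes the bound, and the constant is uniform in $v\in S_pX$ precisely because $a(s)$ in Corollary~\ref{cor:uniformdiv} does not depend on the basepoint.

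**Main obstacle.** The delicate point is to convert the divergence estimate $d(c_v(s),c_w(s))\ge a(s)\delta$ into a \emph{lower} bound on $d(c_v(t),q)-d(c_v(t),p)$ that is uniform and independent of how far out $t$ goes. The natural route is to compare, for large $t$, the broken path $p\to q \to c_v(t)$ against the geodesic $p \to c_v(t)$ and exploit that the geodesic from $c_v(t)$ back toward $p$ must stay angularly close to $c_v$ near $p$ (by uniform divergence applied in reverse), so it cannot pass near $q$; this gives a definite positive lower bound on the Gromov product $(q \mid c_v(t))_p$ being bounded, equivalently on the separation, and $C_1$ emerges as roughly $2(q\mid \xi)_p$ where $\xi=c_v(\infty)$. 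I would expect that making this uniform in $v$ and $t$ simultaneously, rather than for a single geodesic, is where the bookkeeping is genuinely needed, and I would lean on Gromov hyperbolicity (Proposition~\ref{prop:gromprod2} together with Theorem~\ref{thm:gromconv}) to control the relevant Gromov product by a constant depending only on $\delta$ and the hyperbolicity constant $\delta_{\mathrm{hyp}}$.
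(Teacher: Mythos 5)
Your final plan is essentially the paper's proof: the paper establishes, by a contradiction/compactness argument based on Theorem \ref{thm:gromconv}, the uniform bound $0 \le 2(c_v(t)\vert q)_p \le C_1(\delta)$ for all $t \ge 0$, $v \in S_pX$ and $q \in X \backslash C(v,\delta)$ --- exactly the Gromov-product bound you identify in your last paragraph --- and then lets $t \to \infty$ to get $d(p,q) - b_v(q) \le C_1$. One caution: your intermediate steps (uniform divergence of $c_v$ and $c_w$ plus a triangle-inequality comparison) could not suffice on their own, since they would apply verbatim in Euclidean space, where the proposition fails (horoballs are half-spaces, as the paper remarks); so the hyperbolicity input via Theorem \ref{thm:gromconv} that you defer to the ``main obstacle'' is not bookkeeping but the entire argument, and the paper gets the uniformity in $(v,t,q)$ cheaply by passing to sequences $(v_n, t_n, q_n)$ violating the bound and deriving $\angle_p(v_n,w_n) \to 0$, contradicting $q_n \notin C(v_n,\delta)$, rather than by tracking explicit constants.
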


\begin{proof}
  Let $p \in X$ and $\delta > 0$ be given. Then there exists a
  constant $C_1 > 0$ such that
  \begin{equation} \label{eq:gromprodest} 0 \leq 2(c_v(t) \vert q)_p
    \leq C_1 \quad \forall\; t \geq 0 \quad \forall\; v \in S_pX \quad
    \forall\; q \in X \backslash C(v, \delta),
  \end{equation}
  where $( \cdot \vert \cdot)_p$ is the Gromov product introduced in
  Definition \ref{dfn:gromprod}. If this were false, then we could
  find sequences $t_n \geq 0$, $v_n \in S_pX$ and $q_n \in X
  \backslash C(v_n,\delta)$ such that
  $$
  (c_{v_n}(t_n) \vert q_n)_p \to \infty.
  $$
  Let $q_n = c_{w_n}(r_n)$ with $w_n \in S_pX$ and $r_n=d(q_n,p)$.
  This would mean, by Theorem \ref{thm:gromconv}, that $d(p,q_n) \to
  \infty$ and $\angle_p(v_n,w_n) \to 0$, which is a contradiction to
  $q_n \in X \backslash C(v_n,\delta)$.

  \eqref{eq:gromprodest} means that
  $$
  d(p,q) - (d(c_v(t),q)-t) \leq C_1 \quad \forall\; t \ge 0.
  $$
  Taking the limit $t \to \infty$, we obtain
  $$
  d(p,q) - b_v(q) = d(p,q) - \lim\limits_{t \to \infty}
  (d(c_v(t),q)-t) \leq C_1,
  $$
  finishing the proof.
\end{proof}

\begin{remark} The statement of the proposition includes the fact that
  any horoball $\mathcal H$, centered at $\xi = c_{v}(\infty) \in
  X(\infty)$, ends up inside any given cone $C(v,\delta)$, when being
  translated to a horoball $\widetilde {\mathcal H}$ along the stable
  direction (see the illustration in Figure
  \ref{fig:horoincone}). (Note that the horoballs centered at $\xi$
  can be described by $\{ q \in X \mid b_v(q) \le -C \}$, and that
  these horoballs become smaller and shrink towards the limit point
  $\xi$, as $C \in \R$ increases to infinity.)

  \begin{figure}[h]
  \psfrag{H}{$\mathcal H$} 
  \psfrag{Ht}{$\widetilde{\mathcal H}$}
  \psfrag{v0}{$v$}
  \psfrag{Cv0d}{$C(v,\delta)$}
  \psfrag{X(inf)}{$X(\infty)$}
  \psfrag{xi}{$\xi$}
  \begin{center}
         \includegraphics[width=8cm]{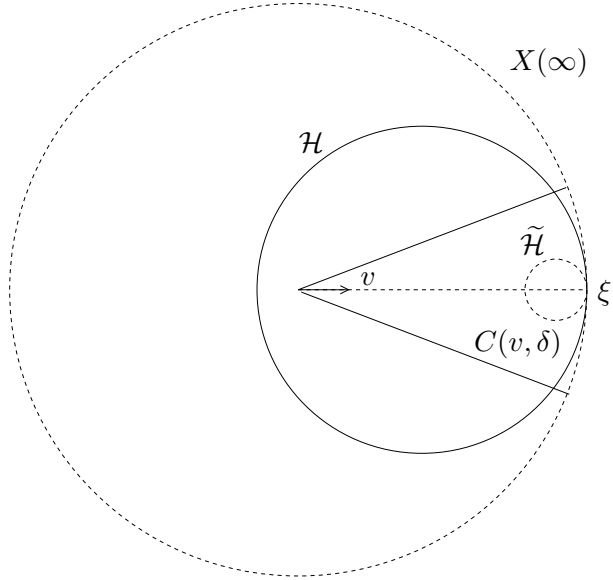}
  \end{center}
  \caption{Geometric property necessary for the solution of the Dirichlet
    problem at infinity}
  \label{fig:horoincone}
  \end{figure}
\end{remark}

\begin{remark}
  Proposition \ref{prop:horoincone} does not hold if $(X,g)$ is the
  Euclidean space. In this case, every horoball is a halfspace, which
  lies never inside a given cone.
\end{remark}

\medskip

Now we state our main result of this section, namely, the solution of
the Dirichlet problem at infinity for rank one asymptotically harmonic
manifolds satisfying \eqref{eq:curvboundcond} via an explicit integral
formula involving the visibility measures (see Theorem
\ref{thm:dirichprob} from the Introduction).

\begin{thm2} Let $(X,g)$ be a rank one asymptotically harmonic
  manifold satisfying \eqref{eq:curvboundcond}. Let $f: X(\infty) \to
  {\R}$ be a continuous function. Then there exists a unique harmonic
  function $H_f: X \to {\R}$ such that
  \begin{equation}
  \lim\limits_{x \to \xi} H_f(x) = f(\xi).
  \end{equation}
  Moreover, $H_f$ has the following integral presentation:
  $$
  H_f(x) = \int\limits_{X(\infty)} f(\xi) d \mu_x(\xi),
  $$
  where $\{\mu_x\}_{x \in X} \subset {\mathcal M}_1(X(\infty))$ are
  the visibility probability measures.
\end{thm2}

\begin{proof}

  \noindent 
  {(a)} We show first that $\int\limits_{X(\infty)} f(\xi) d
  \mu_x(\xi)$ is a harmonic function. Let $p \in X$. Then
   $$ \Delta_x \int\limits_{X(\infty)} f(\xi) d \mu_x(\xi) = \Delta_x
  \int\limits_{X(\infty)} f(\xi) e^{-h b_{p,\xi}(x)} d \mu_p(\xi). $$
  Let $K \subset X$ be a compact set. Then $x \mapsto f(\xi) e^{-h
    b_{p,\xi}(x)}$ is bounded for all $x \in K$ and all $\xi \in
  X(\infty)$, because of $|b_{p,\xi}(x)| \leq d(p,x)$. Moreover
  $\Delta_x f(\xi) e^{-h b_{p,\xi}(x)} = 0$ and $b_{p,\xi}(\cdot)$ is
  smooth, because of $\Delta_x b_{p,\xi} = h$. Therefore,
  $$
  \Delta_x \int\limits_{X(\infty)} f(\xi) d \mu_x(\xi) =
  \int\limits_{X(\infty)} f(\xi) \underbrace{\Delta_x e^{- h
      b_{p,\xi}(x)}}_{= 0} d \mu_p(\xi) = 0.
  $$

  \smallskip

  \noindent 
  {(b)} Now we prove
  $$
  \lim\limits_{x \to \xi_0} \int\limits_{X(\infty)} f(\xi) d
  \mu_x(\xi) = f(\xi_0).
  $$
  Let $\xi_0 = c_{v_0}(\infty)$ with $v_0 \in S_p X$. Without loss of
  generality, we can assume that $f(\xi_0) = 0$ (by subtracting a
  constant if necessary). Let $\epsilon > 0$ be given. Then there
  exists $\delta > 0$, such that
  $$
  |\; f(c_v(\infty))\; | \leq \epsilon \quad \forall\; v \in S_pX \;
  \text{with}\; \angle_p(v_0,v) \leq \delta.
  $$
  We split the integral representing $H_f(x)$ in the following way:
  \begin{multline*}
    \omega_n |H_f(x)| \leq \left| \int_{S_pX\; \backslash\; \{v\;
        \mid\; \angle(v_0,v) \leq \delta\}} f(c_v(\infty))\;
      e^{-h b_v(x)}\; d\theta_p(v)\right| + \\
    \left |\int_{\{v\; \mid\; \angle(v_0,v) \leq \delta\}}
      f(c_v(\infty))\; e^{-h b_v(x)}\; d \theta_p(v) \right|.
  \end{multline*}
  Now, using Proposition \ref{prop:horoincone}, we obtain for all $x
  \in C(v_0,\delta/2)$ and $C_1 = C_1(\delta/2)$
  \begin{multline*}
    \omega_n |H_f(x)| \leq \Vert f \Vert_\infty \int_{S_pX\;
      \backslash\; \{v\; \mid\; \angle(v_0,v) \leq \delta\}}
    e^{-h(d(p,x)-C_1)}\; d \theta_p(v) + \\
    \epsilon \int_{\{v\; \mid\; \angle(v_o,v) \leq \delta\}}
    e^{-h b_v(x)}\; d \theta_p(v) \leq\\
    \Vert f \Vert_\infty\; \omega_n\; e^{hC_1}\; e^{-hd(p,x)} +
    \epsilon \underbrace{\int_{S_pX} e^{-hb_v(x)}d
      \theta_p(v)}_{=\int_{S_xX}
      d \theta_x(v) = \omega_n} \leq\\
    \omega_n \left(\epsilon + \Vert f \Vert_\infty\; e^{hC_1}\;
      e^{-hd(p,x)}\right).
  \end{multline*}
  Let $x_n = c_{v_n}(r_n)$ with $v_n \in S_pX$ and $r_n \ge 0$ be a
  sequence converging to $\xi_0 \in X(\infty)$. Then we have $r_n =
  d(p,x_n) \to \infty$ and $\angle_p (v_0,v_n) \to 0$. Since $\epsilon
  > 0$ was arbitrary, the above estimate shows that
  $$
  H_f(x) \to 0\quad \text{for}\, x \to \xi_0.
  $$

  \smallskip

  \noindent
  {(c)} Uniqueness of the solution follows from the maximum
  principle.
\end{proof}

\begin{remark} The above considerations show that rank one
  asymptotically harmonic manifolds $(X,g)$ with reference point $x_0
  \in X$ satisfying \eqref{eq:curvboundcond} admit Poisson kernels of
  the form $P(x,\xi) = e^{-h b_{x_0,\xi}(x)}$. 

  These Poisson kernels can be used to define a map $\varphi: X \ni x
  \to P(x,\xi) d\mu_{x_0}(\xi) \in {\mathcal P}(X(\infty))$, where
  ${\mathcal P}(X(\infty))$ is the space of all probability measures
  on $\partial X$ which are absolutely continuous to
  $\mu_{x_0}$. ${\mathcal P}(X(\infty))$ carries a natural Riemannian
  metric $G$, called the {\em Fisher-Information metric} (see \cite{Fr}
  or \cite{ItSa1} for more details). The following was proved in
  \cite[Prop. 1]{ItSa1} for homogeneous Hadamard manifolds of
  dimension $n$: if $(X,g)$ admits Poisson kernels of the form
  $P(x,\xi) = e^{-c b_{x_0,\xi}(x)}$ with $c > 0$, then the Poisson
  kernel map $\varphi: X \to {\mathcal P}(X(\infty))$ satisfies
  $\varphi^* G = \frac{c^2}{n} g$, i.e., that $\varphi$ is a
  homothety. Examples of such spaces are rank one symmetric spaces of
  non-compact type and Damek-Ricci spaces. Conversely, the following
  was shown in \cite[Thm 1.3]{ItSa2}: If $(X,g)$ is an $n$-dimensional
  Hadamard manifold admitting a Poisson kernel map $\varphi: X \to
  {\mathcal P}(X(\infty))$, which is both a {\em homothety} with
  constant $\frac{c^2}{n}$, $c > 0$ and {\em minimal}, then $(X,g)$ is
  necessarily asymptotic harmonic with horospheres of mean curvature
  $c$. These results provide an interesting characterization of
  asymptotic harmonic manifolds via the Poisson kernel map.
\end{remark}

\section{Polynomial volume growth of horospheres}
\label{chp:horopoly}

Let $(X,g)$ be a rank one asymptotically harmonic manifold satisfying
\eqref{eq:curvboundcond}. Let $W^s(v) \subset SX$ be a strong stable
manifold through $v \in SX$. Its projection ${\mathcal H}_v = \pi
W^s(v) \subset X$ is a horosphere orthogonal to $v$. Let $p = \pi(v)$.
Consider a curve
$$
\beta : [0,1] \to \HH_v
$$
with ${\rm length}(\beta) \le r$. Let $\gamma: [0,1] \to W^s(v)$ be
the lift of $\beta$ in the strong stable manifold and $\beta_t = \pi
\Phi^t \gamma$, where $\Phi^t$ is the geodesic flow on $SX$. We
conclude from \cite[Corollary 2.6]{KnPe2} that
$$ {\rm length}(\beta_t) \le a_2 r e^{-\frac{\rho}{2}t} $$
for all $t \ge 0$. Hence ${\rm length}(\beta_t) \leq 1$ for all
$$ t \ge t_0 := \frac{2 \log(a_2 r)}{\rho}. $$

\begin{figure}[h]
  \begin{center}
    \psfrag{p=pi(v)}{$p=\pi(v)$} 
    \psfrag{v}{$v$}
    \psfrag{Hv=piWsv}{${\mathcal H}_v=\pi W_v^s$}
    \psfrag{Hptv=piWsptv}{${\mathcal H}_{\Phi^t(v)}=\pi W_{\pi^t(v)}^s$}
    \psfrag{phit(v)}{$\Phi^t(v)$}
    \includegraphics[width=8cm]{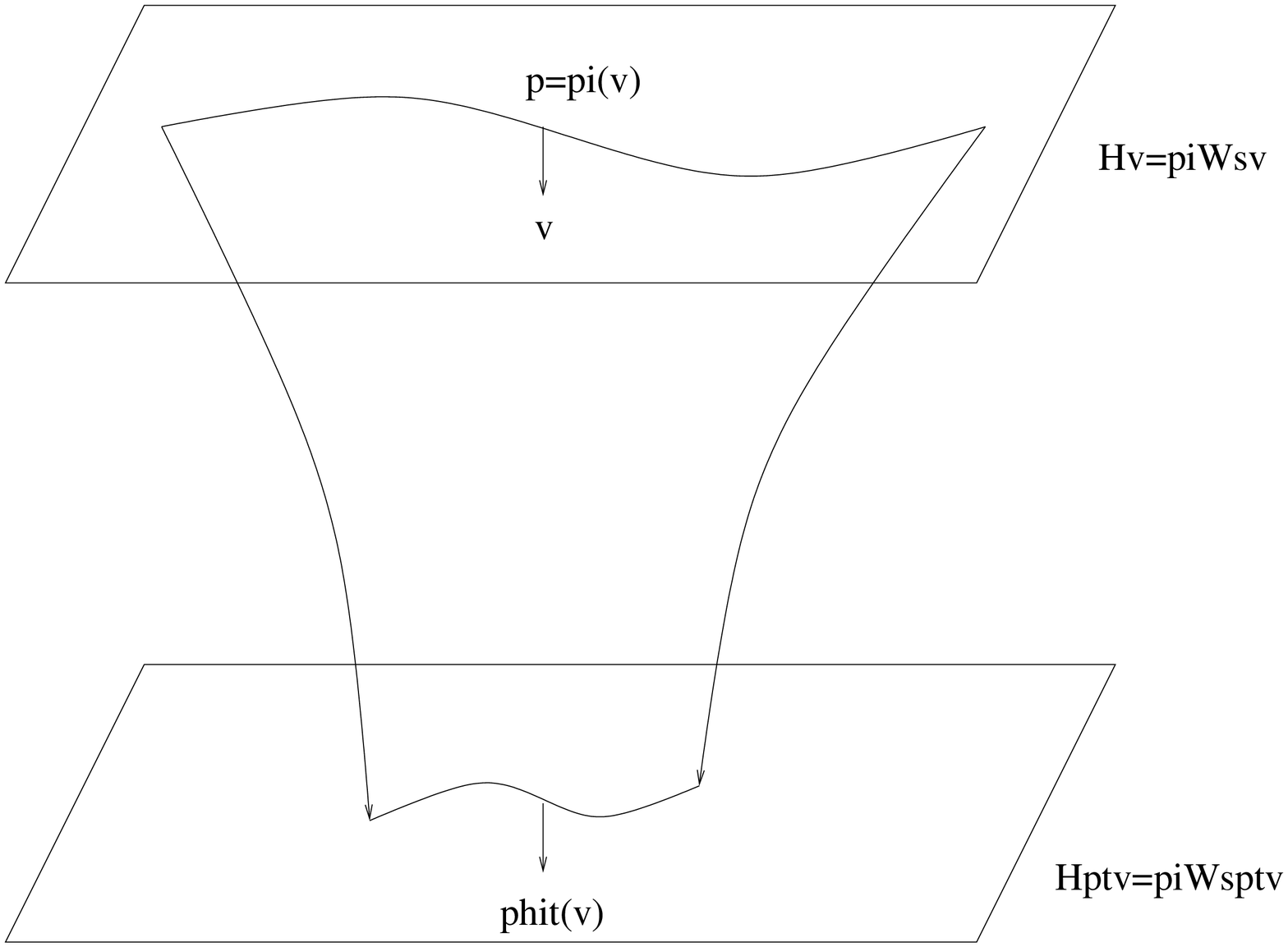}
  \end{center}
  \caption{Contraction of the geodesic flow on stable horospheres}
  \label{fig_horoshrink}
\end{figure}

Since the curvature of $X$ and the second fundamental form of horospheres
are bounded, the Gauss equation implies that the sectional curvatures of
horospheres are bounded, as well. Therefore, by the volume comparison
theorem, any ball of radius $1$ in any horosphere has an
intrinsic volume bounded by some constant $A>0$:
$$
\vol_{\mathcal{H}} (B_1(q)) \leq A \quad \forall\; \mathcal{H}\;
\text{horospheres}\; \forall\; q \in \mathcal{H}.
$$
This implies that
\begin{eqnarray*}
\vol_{\mathcal{H}_{v}} (B_r(p)) & \leq & 
\vol_{\mathcal{H}_{v}} (\Phi^{-t_0} (B_1 (\pi \circ \Phi^{t_0}(v))))\\
& \leq & e^{ht_0} \vol_{\mathcal{H}_{\Phi^{t_0}(v)}} (B_1 (\pi \circ \Phi_{t_0}(v))) 
\leq A e^{ht_0} = A' r^{\frac{2h}{\rho}},
\end{eqnarray*}
with $A' = A a_2^{\frac{2h}{\rho}}$. This proves that all horospheres
have polynomial volume growth in $(X,g)$. $\hfill \square$

%\section{Mean value property of harmonic functions at infinity}
%\label{chp:meanvalueinf}

\section{Horospherical means and bounded eigenfunctions}
\label{chp:meanvalueinf}

In this section, we are mainly concerned with horospherical means of
bounded eigenfunctions on rank one asymptotically harmonic manifolds
$X$ satisfying \eqref{eq:curvboundcond}. Before we consider the
special class of eigenfunctions, we first state a general result for
{\em all} continuous functions on the geometric compactification
$\overline{X}$. The underlying space is also more general than just
rank one asymptotically harmonic manifolds.

\begin{theorem} \label{thm:meanvalprop0} Let $(X,g)$ be a complete,
  simply connected Riemannian manifold without conjugate points of
  dimension $n$. Assume that the geometric compactification
  $\overline{X} = X \cup X(\infty)$ carries a topology such that the
  maps $\bar \varphi_p: \overline{B_1(p)} \to \overline{X}$ are
  homeomorphisms for all $p \in X$ (see Section \ref{sec:geomcomp} for
  details). Moreover, we assume that the following holds for every
  horosphere $\mathcal{H} \subset X$:
  \begin{itemize}
  \item[(a)] We have $\vol_{n-1}({\mathcal H}) = \infty$.
  \item[(b)] For every ball $B_r(p)$ of radius $r > 0$ around $p \in X$, we have
  $$ \vol_{n-1}({\mathcal H} \cap B_r(p)) < \infty. $$
\item[(c)] The closure of $\mathcal{H}$ in the geometric
  compactification $\overline{X}$ satisfies
  $$ \overline{\mathcal H} = \mathcal{H} \cup \{ \xi \}, $$
  where $\xi \in X(\infty)$ is the center of $\mathcal H$.
  \end{itemize}
  Then we have for every horosphere $\mathcal{H} \subset X$ centered
  at $\xi \in X(\infty)$, every compact exhaustion $\{ K_j \}$, and every
  continuous function $f: \overline{X} \to {\mathbb R}$: 
  \begin{equation} \label{eq:horosphmean}
  \lim\limits_{j \to \infty} \frac{\int_{K_j} f(x)
    dx}{\vol_{n-1}(K_j)} = f(\xi).
  \end{equation}
\end{theorem}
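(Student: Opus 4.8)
The plan is to reduce to the case $f(\xi)=0$: replacing $f$ by $f-f(\xi)$ turns the left-hand side of \eqref{eq:horosphmean} into $\frac{\int_{K_j} f\,dx}{\vol_{n-1}(K_j)}-f(\xi)$, so it suffices to show that the average of a continuous function vanishing at $\xi$ tends to $0$. The guiding geometric idea is that conditions (a)–(c) force almost all of the $(n-1)$-volume of the exhaustion $\{K_j\}$ to concentrate near the single boundary point $\xi$, precisely where $f$ is small.

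First I would record two consequences of compactness. Since $\overline{X}$ is homeomorphic to $\overline{B_1(p)}$ it is compact, so the continuous $f$ is bounded, say $|f|\le M$ on $\overline{X}$. Given $\epsilon>0$, continuity of $f$ at $\xi$ yields an open neighbourhood $U\subset\overline{X}$ of $\xi$ with $|f|<\epsilon$ on $U$. The key structural step uses (c): because $\xi$ is the \emph{only} point of $\overline{\mathcal H}$ lying in $X(\infty)$, the set $\overline{\mathcal H}\setminus U=\mathcal H\setminus U$ is closed in the compact space $\overline{X}$, hence compact, and it lies entirely in $X$. Therefore it is contained in some metric ball $B_r(p)$, and (b) gives the finite bound $\vol_{n-1}(\mathcal H\setminus U)\le\vol_{n-1}(\mathcal H\cap B_r(p))=:V_0<\infty$.

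Next I would split the integral over $K_j$ according to $U$ and its complement:
$$\left|\int_{K_j} f\,dx\right|\le\int_{K_j\cap U}|f|\,dx+\int_{K_j\setminus U}|f|\,dx\le\epsilon\,\vol_{n-1}(K_j)+MV_0,$$
using $|f|<\epsilon$ on $K_j\cap U$ for the first term and $K_j\setminus U\subset\mathcal H\setminus U$ together with $|f|\le M$ for the second. Dividing by $\vol_{n-1}(K_j)$ yields
$$\left|\frac{\int_{K_j} f\,dx}{\vol_{n-1}(K_j)}\right|\le\epsilon+\frac{MV_0}{\vol_{n-1}(K_j)}.$$
Since $\{K_j\}$ is a compact exhaustion of $\mathcal H$ and $\vol_{n-1}(\mathcal H)=\infty$ by (a), continuity of the measure from below gives $\vol_{n-1}(K_j)\to\infty$, so the second summand vanishes in the limit and $\limsup_j\bigl|\,\cdot\,\bigr|\le\epsilon$. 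As $\epsilon>0$ is arbitrary the average tends to $0=f(\xi)$, which is the claim.

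The routine parts are the uniform bound $|f|\le M$ and the splitting estimate; the one step carrying genuine content is the use of (c) to conclude that the portion $\mathcal H\setminus U$ of the horosphere staying away from $\xi$ is compact and hence, by (b), of finite $(n-1)$-volume. This is where I expect the only subtlety: one must verify carefully that $\mathcal H\setminus U$ truly avoids $X(\infty)$, which is exactly the assertion that $\overline{\mathcal H}\cap X(\infty)=\{\xi\}$. Granting that, the finite ``bad'' contribution $MV_0$ is washed out by the diverging total volume supplied by (a), and the \textbf{concentration at $\xi$} produces the stated horospherical mean.
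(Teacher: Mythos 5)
Your proposal is correct and takes essentially the same approach as the paper: both localize near $\xi$ via a neighbourhood $U$ on which $f$ is within $\epsilon$ of $f(\xi)$, use (c) to confine the part of $\mathcal H$ outside $U$ to a metric ball of finite $(n-1)$-volume by (b), and let the diverging volume $\vol_{n-1}(K_j)\to\infty$ from (a) wash out that finite contribution. The only cosmetic difference is that the paper proves the inclusion $\mathcal H\subset B_R(p_0)\cup U$ indirectly by a sequential argument, whereas you deduce compactness of $\mathcal H\setminus U$ directly as a closed subset of the compact space $\overline X$ avoiding $X(\infty)$ --- an equivalent step.
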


\begin{proof} Let $\mathcal{H}$ be centered at $\xi \in X(\infty)$ and
$p_0 \in X$. We show first indirectly that for every open
neighbourhood $U \subset \overline{X}$ of $\xi$ there exists $R > 0$
such that
\begin{equation} \label{eq:horoinc}
  \mathcal{H} \subset B_R(p_0) \cup U. 
\end{equation}
Assume that there exists $x_n \in \mathcal{H}$ with $x_n \not\in U$
and $d(p_0,x_n) \to \infty$. Then, after choosing a subsequence if
necessary, we have $x_n \to \xi' \in X(\infty)$ with $\xi' \neq \xi$. But
this is ruled out by (c). 

Let $\{ K_j \}$ be a compact exhaustion and $\epsilon > 0$ be given. Then there
exists an open neighbourhood $U \subset \overline{X}$ of $\xi$ such that
$$ | f(q) - f(\xi) | < \epsilon \quad \text{for all $q \in U$.} $$
Let $R > 0$ such that \eqref{eq:horoinc} is satisfied. Let $K_{j,0} =
K_j \cap B_r(p_0)$ and $K_{j,1} = K_j \backslash K_{j,0} \subset
U$. Then (a) and (b) yield $1/\vol(K_j) \int_{K_{j,0}} f \to 0$
and $\vol(K_{j,1})/\vol(K_j) \to 1$, which imply
$$ f(\xi)-\epsilon \le \liminf_{j \to \infty} \frac{\int_{K_j} f(x)
    dx}{\vol_{n-1}(K_j)} \le \limsup_{j \to \infty} \frac{\int_{K_j}
    f(x) dx}{\vol_{n-1}(K_j)} \le f(\xi)+\epsilon. $$
This shows \eqref{eq:horosphmean}, since $\epsilon > 0$ was arbitrary.
\end{proof}

The following proposition states that Theorem \ref{thm:meanvalprop0}
is applicable in our setting of rank one asymptotically harmonic
manifolds.

\begin{prop} \label{prop:horoprops} Let $(X,g)$ be a rank one
  asymptotically harmonic manifold of dimension $n$ satisfying
  \eqref{eq:curvboundcond}. Then every horosphere $\mathcal{H} \subset
  X$ satisfies properties (a), (b), and (c) in Theorem
  \ref{thm:meanvalprop0}.
\end{prop}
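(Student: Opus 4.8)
The plan is to verify the three properties (a), (b), (c) separately for an arbitrary horosphere $\mathcal{H} = \mathcal{H}_v$ centered at $\xi = c_v(\infty)$. Properties (a) and (b) are essentially volume statements, while (c) is a topological statement about the closure in the cone topology, and I expect (c) to be the main obstacle.

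For property (b), I would argue directly from the polynomial volume growth of horospheres established in Section 7. Since $\mathcal{H} \cap B_r(p)$ is contained in a set of bounded $\mathcal{H}$-diameter (the intersection of a horosphere with a ball), its intrinsic volume is finite; indeed, the computation in Section 7 shows $\vol_{\mathcal H}(B_r(q)) \le A' r^{2h/\rho} < \infty$ for balls measured intrinsically, and the extrinsic ball $B_r(p)$ intersected with $\mathcal{H}$ sits inside such an intrinsic ball of comparable radius. For property (a), I would conversely show the volume is infinite: a horosphere is a complete noncompact hypersurface, and since $X$ has unbounded geodesics whose Busemann level sets are the horospheres, one can produce arbitrarily long curves in $\mathcal{H}$; combined with a uniform \emph{lower} bound on intrinsic volumes of unit balls (again from bounded sectional curvature of horospheres via the Gauss equation and volume comparison), this forces $\vol_{n-1}(\mathcal{H}) = \infty$.

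The essential content is property (c): I must show that the only point of $X(\infty)$ in the closure $\overline{\mathcal H}$ is the center $\xi$ itself. First I would note $\xi \in \overline{\mathcal H}$ since the geodesic rays emanating orthogonally from points of $\mathcal{H}$ toward $\xi$ stay within bounded distance, so points of $\mathcal{H}$ pushed along the stable direction converge to $\xi$. The hard direction is to rule out any \emph{other} boundary point: I would take a sequence $x_n \in \mathcal{H}$ with $d(p,x_n) \to \infty$ and show $x_n \to \xi$ in the cone topology, i.e.\ $\angle_p(x_n, x_m) \to 0$. Here I would exploit Proposition \ref{prop:horoincone}: writing $x_n = c_{v_n}(r_n)$ with $v_n \in S_pX$, if the angles $\angle_p(v_n, v_0)$ (where $v_0$ points toward $\xi$) did \emph{not} go to zero, then infinitely many $x_n$ would lie outside a fixed cone $C(v_0, \delta)$, forcing $b_{v_0}(x_n) \ge d(p,x_n) - C_1 \to \infty$ by Proposition \ref{prop:horoincone}. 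But every point of $\mathcal{H}$ has a fixed finite Busemann value $b_{v_0}(x_n) = b_{v_0}(p) + (\text{const determined by } \mathcal H)$, a bounded quantity, giving a contradiction.

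Thus the argument for (c) hinges on the tension between the lower bound $b_{v_0}(x_n) \ge d(p,x_n) - C_1$ forced outside any cone and the fact that the Busemann function is \emph{constant} along $\mathcal{H}$. The main obstacle I anticipate is matching conventions: the horosphere $\mathcal{H}_v = \pi W^s(v)$ is a level set of the Busemann function $b_v$, and I must carefully relate $b_v$ (centered at $\xi$ as seen from $p = \pi(v)$) with the function $b_{v_0}$ appearing in Proposition \ref{prop:horoincone}, using that Busemann functions for the same boundary point from different basepoints differ by an additive constant. Once this bookkeeping is in place, (a) and (b) follow quickly from the Section 7 estimates together with uniform two-sided bounds on intrinsic unit-ball volumes, and (c) follows from the cone-containment Proposition \ref{prop:horoincone} as sketched.
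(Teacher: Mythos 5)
Your treatment of (c) is essentially correct and uses the same mechanism as the paper: the paper also derives (c) from Proposition \ref{prop:horoincone}, though it argues by translating the horosphere inward (using the remark after that proposition to get $\mathcal{H}_s = \eta_s(\mathcal{H}) \subset C(v,\delta)$ for $s$ sufficiently negative) and then derives an angle contradiction for the flowed sequence, while you apply the inequality $b_v(q) \ge d(p,q) - C_1$ directly against the constancy of the Busemann function on $\mathcal{H}$. The basepoint bookkeeping you worry about can be sidestepped entirely: choose the cone apex $p_0 \in \mathcal{H}$ and $v \in S_{p_0}X$ the inward normal, so that $\mathcal{H} = b_v^{-1}(0)$ exactly (this is how the paper sets up its notation); then $0 = b_v(x_n) \ge d(p_0,x_n) - C_1 \to \infty$ is an immediate contradiction, and no comparison of Busemann functions at different basepoints is needed. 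One small repair: $\xi \in \overline{\mathcal{H}}$ does not follow from pushing points of $\mathcal{H}$ along the stable direction (the flowed points no longer lie on $\mathcal{H}$); it follows because $\mathcal{H}$ is unbounded --- a consequence of (a) together with (b) --- and every divergent sequence in $\mathcal{H}$ converges to $\xi$ by your cone argument.

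Parts (a) and (b), however, contain genuine gaps. In (a), your key ingredient --- a uniform \emph{lower} bound on intrinsic volumes of unit balls in $\mathcal{H}$ ``from bounded sectional curvature via volume comparison'' --- is false: comparison theorems give \emph{upper} volume bounds from lower curvature bounds, and two-sided curvature bounds do not exclude collapsing, so unit-ball volumes need not be bounded below; indeed a complete noncompact manifold with bounded curvature can have finite volume (hyperbolic cusps), so ``complete noncompact plus bounded geometry'' does not force $\vol_{n-1}(\mathcal{H}) = \infty$. In (b), the assertion that $\mathcal{H} \cap B_r(p)$ has bounded intrinsic diameter, hence lies in an intrinsic ball of comparable radius, is stated without proof and is not automatic: extrinsic-to-intrinsic distance comparison on horospheres is a nontrivial matter (in hyperbolic-like spaces intrinsic horospherical distances are exponentially distorted relative to ambient ones), and the paper never establishes such a bound. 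The paper circumvents both difficulties with a single device, namely the volume transformation rule \eqref{eq:AtA} for the Busemann flow $\eta_t$, integrated along the flow: for (a), if $\vol_{n-1}(\mathcal{H}) < \infty$, then the horoball $\mathcal{B} = \bigcup_{t \le 0}\mathcal{H}_t$ would have volume $\int_{-\infty}^0 e^{ht}\,dt \cdot \vol_{n-1}(\mathcal{H}) = \frac{1}{h}\vol_{n-1}(\mathcal{H}) < \infty$, contradicting the fact that $\mathcal{B}$ contains ambient balls $B_r(c_v(r))$ of arbitrarily large radius, whose volumes are unbounded by Proposition \ref{prop:Avest}; for (b), if $A = \mathcal{H} \cap B_r(p)$ had infinite $(n-1)$-volume, the slab $A_1 = \bigcup_{0 \le t \le 1}\eta_t(A)$ would have infinite $n$-volume by \eqref{eq:AtA}, yet $A_1 \subset B_{r+1}(p)$, which has finite volume. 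You should replace your (a) and (b) by these thickening arguments; note in particular that the polynomial volume growth from Section 7, which you invoke for (b), is neither needed nor sufficient as you use it.
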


Before we present the proof of the proposition, we first introduce
some useful notation. Let ${\mathcal H} \subset X$ be a
horosphere. Then there exists $p_0 \in {\mathcal H}$ and $v \in
S_{p_0}X$ such that $\mathcal{H} = b_v^{-1}(0)$. Let $\mathcal{H}_t =
b_v^{-1}(t)$ and $\eta_t : X \to X$ be the flow associated to $\grad
b_v$. Then $\mathcal{H} = \mathcal{H}_0$, $\eta_t : \mathcal{H}_0 \to
\mathcal{H}_t$ and, for every $A \subset {\mathcal H}_0$ and $A(t) =
\eta_t(A) \subset {\mathcal{H}}_t$ we have (see
\cite[Prop. 3.1]{PeSa})
\begin{equation} \label{eq:AtA}
\vol_{n-1} (A(t)) = e^{ht} \vol_{n-1} (A).
\end{equation}
 
\begin{proof}
  (a) Assume there is a horosphere $\mathcal{H} \subset X$ with
  $\vol_{n-1}(\mathcal{H}) < \infty$. Using the above notation
  associated to $\mathcal{H}$, we see that the horoball
  $$ {\mathcal B} = \bigcup_{t \le 0} \mathcal{H}_t = b_v^{-1}((-\infty,0]) $$
  must also be of finite volume, since
  $$ \vol_n({\mathcal B}) = \int_{-\infty}^0 e^{ht}\, dt 
  \vol_{n-1}(\mathcal{H}) = \frac{1}{h} \vol_{n-1}(\mathcal{H}). $$ 
  But ${\mathcal B}$ contains the balls $B_r(c_v(r)) \subset X$ with
  arbitrarily large radii $r > 0$, whose volumes become arbitrarily large
  because of Proposition \ref{prop:Avest}. This is a contradiction.

  (b) Let $\mathcal{H}$ be a horosphere and $B_r(p) \subset X$ a
  ball. Let $A = {\mathcal H} \cap B_r(p)$ and assume that
  $\vol_{n-1}(A) = \infty$. Let $A_1 = \bigcup_{0 \le t \le 1}
  A(t)$. Then we also have $\vol_n(A_1) = \infty$, by
  \eqref{eq:AtA}. But $A_1 \subset B_{r+1}(p)$, and $B_{r+1}(p)$ has
  finite volume. This is, again, a contradiction.

  (c) Since ${\mathcal H} = b_v^{-1}(0)$ is closed in $X$, we only
  need to show that $\mathcal H$ has no other accumuluation points in
  $X(\infty)$ other than $\xi$. We proceed indirectly. Assume there
  exist $x_n \in \mathcal H$ with $d(p,x_n) \to \infty$ and $\lim x_n
  = \xi' \in X(\infty)$ and $\xi' \neq \xi$. Then we can find $\delta
  > 0$ such that $\xi' = c_w(\infty)$ for some $w \in S_{p_0}X$ with
  $\angle(w,v) > \delta$. Using the remark after Proposition
  \ref{prop:horoincone}, we know that there exists $s < 0$ such that
  $\mathcal{H}_s = \eta_s(\mathcal{H}) \subset C(v,\delta)$. Let
  $x_n(s) = \eta_s(x_n) \in \mathcal{H}_s$. Since $d(x_n,x_n(s)) = s$,
  we still have $x_n(s) \to \xi'$ and $x_n(s) \in \mathcal{H}_s
  \subset C(v,\delta)$ and, therefore, $\angle(w,v) \le \delta$,
  which is a contradiction.
\end{proof}

Next we prove the main result of this section for bounded
eigenfunctions (see Theorem \ref{thm:horomeanbdeigfunc} in the
Introduction). The proof is similar to the proof of Theorem 1 in \cite{KP}.

\begin{thm3} Let $(X,g)$ be a rank one asymptotically harmonic
  manifold of dimension $n$ satisfying \eqref{eq:curvboundcond} and $h
  > 0$ be the mean curvature of all horospheres. Let $\lambda \neq 0$
  be a real number and $f \in C^\infty(X)$ be a bounded
  function satisfying $\Delta f + \lambda f = 0$ and $\mathcal{H}
  \subset X$ be a horosphere with {\em isoperimetric exhaustion} $\{
  K_j \}$. Then we have
  \begin{equation} \label{eq:horosphmean2} \lim\limits_{j \to \infty}
    \frac{\int_{K_j} f(x) dx}{\vol_{n-1}(K_j)} = 0.
  \end{equation}
\end{thm3}

\begin{remark} Since horospheres have polynomial volume growth, the
  intrinsic balls of suitably chosen increasing radii $r_j$ satisfy
  $$
  \frac{\vol_{n-2}(\partial
    B_{\mathcal{H}}(r_j))}{\vol_{n-1}(B_{\mathcal{H}}(r_j))} \to 0.
  $$
  A suitable choice of sets $K_j$ are regularized spheres, as
  explained in \cite[p. 665]{KP}. But there might be many more
  increasing sets satisfying this asymptotic isoperimetric property.
\end{remark}

\begin{proof}
  We give an indirect proof. Assume that \eqref{eq:horosphmean2} is
  not satisfied. Then we can assume -- by replacing $\{ K_j \}$ by a
  subsequence, if needed -- that there exists $c \neq 0$ such that
  $$ \lim\limits_{j \to \infty}
    \frac{\int_{K_j} f(x) dx}{\vol_{n-1}(K_j)} = c.
  $$

  Let $\eta_t: {\mathcal H}_0 \to {\mathcal H}_t$ be again the flow
  defined above after Proposition \ref{prop:horoprops}. Let $K_j(t) =
  \eta_t (K_j) \subset \mathcal{H}_t$. Recall that we have
  $$
  \vol_{n-1} (K_j(t)) = e^{ht} \vol_{n-1} (K_j).
  $$
  Since $X$ has a lower sectional curvature bound, there exists $C >
  0$ such that
  $$
  \vol_{n-2}(\partial K_j(t)) \leq e^{C|t|} \vol_{n-2}(\partial K_j).
  $$
  This implies that, {\em on every compact interval} $I \subset [0,
  \infty)$, we have
  $$
  \left\Vert \frac{\vol_{n-2}(\partial
    K_j(\cdot))}{\vol_{n-1}(K_j(\cdot))} \right\Vert_{\infty,I} \to 0, \quad
  \text{as}\: j \to \infty.
  $$
  Define
  $$
  g_j(t) = \frac{\int_{K_j(t)} f(x)dx}{\vol_{n-1}(K_j(t))} \quad
  \forall\; t \in {\R}.
  $$
  Since $\Vert g_j \Vert_\infty \leq \Vert f \Vert_\infty$, using
  diagonal arguments, we find a subsequence $g_{j_k}$ such that
  $g_{j_k}(t) \to g(t)$, for all rational $t \in {\mathbb Q}$. Since
  $|\nabla f|$ is uniformly bounded by Yau's gradient estimate
  \cite[Theorem 3]{Yau}, $f$ is uniformly continuous and therefore,
  the sequence $g_{j_k}$ is equicontinuous. This implies that we have
  $g_{j_k} \to g$ pointwise to a continuous limit and $g(0) = c \neq
  0$.

  Next we show that $g$ satisfies
  \begin{equation} \label{eq:diffeq}
    g'' + hg' + \lambda g= 0, 
  \end{equation}
  in the distributional sense. Let $\psi \in C_0^\infty ({\R})$ be a
  test function. Then we have
  $$ \int\limits_{- \infty}^\infty g_j(t) (\psi''(t) - h
  \psi'(t) + \lambda h)dt = \int\limits_{- \infty}^\infty \frac{\int_{K_j(t)}
    f(x)dx}{\vol_{n-1}(K_j(t))} (\psi''(t) - h
  \psi'(t) + \lambda \psi)dt.
  $$
  Let $\tilde{f} : \mathcal{H} \times (- \infty,\infty) \to {\R}$ be
  defined as $\tilde{f}(x,t) : = f(\eta_t(x))$. The tranformation
  formula yields:
  $$
  \int\limits_{K_j(t)} f(x)dx = \int\limits_{K_j} f \circ \eta_t(x)
  \overbrace{{\rm Jac}\; \eta_t(x)}^{e^{ht}}dx = e^{ht}
  \int\limits_{K_j} \tilde{f} (x,t)dx.
  $$
  Therefore, we have $g_j(t) = 1/\vol_{n-1}(K_j) \int\limits_{K_j}
  f(\eta_t x)dx$, and
  \begin{eqnarray*}
    g_j''(t) + h g_j'(t) + \lambda g_j & = & \frac{1}{\vol_{n-1}(K_j)} 
    \int\limits_{K_j} \frac{d^2}{dt^2} f(\eta_t x) + h \frac{d}{dt} 
    f(\eta_t x)+ \lambda f(\eta_t x)dx\\
    & = & \frac{1}{\vol_{n-1}(K_j(t))} \int\limits_{K_j(t)} 
    \underbrace{\Delta_x f(x) + \lambda f(x)}_{= 0} - 
    \Delta_{\mathcal{H}_t} f(x)dx\\
    & = & - \frac{1}{\vol_{n-1}(K_j(t))} \int\limits_{K_j(t)} 
    \Delta_{\mathcal{H}_t} f(x)dx\\
    & = & \frac{1}{\vol_{n-1}(K_j(t))} \int\limits_{\partial K_j(t)} 
    \langle \grad_{\mathcal{H}_t} f(x), \nu_x \rangle dx,
  \end{eqnarray*}
  where $\nu_x$ denotes the outward unit vector of $\partial K_j(t)
  \subset {\mathcal{H}_t}$. Since ${\rm supp}\; \psi \subset \R$ is
  compact, we have
  \begin{multline*}
    \int\limits_{- \infty}^\infty g_j(t)(\psi''(t) - h \psi'(t) +
    \lambda \psi(t))dt = \int\limits_{- \infty}^\infty (g_j''(t) + h
    g_j'(t) + \lambda g_j(t))
    \psi(t)dt \\
    = \int\limits_{- \infty}^\infty \frac{1}{\vol_{n-1}(K_j(t))}
    \int\limits_{\partial K_j(t)} \langle \grad_{\mathcal{H}(t)} f(x),
    \nu_x \rangle dx\, \psi(t)dt.
  \end{multline*}
  Taking absolute value and using, again, Yau's gradient estimate
  \cite[Theorem 3]{Yau}, we obtain
  \begin{multline*}
    \left| \int\limits_{- \infty}^\infty g_j(t)(\psi''(t) - h
      \psi'(t) + \lambda \psi(t))dt \right| \\
    \leq \int\limits_{\supp \psi} \frac{\vol_{n-2} (\partial
      K_j(t))}{\vol_{n-1}(K_j(t))} \;  \Vert \grad_X f \Vert_\infty\;
    \Vert \psi \Vert_\infty\; dt \to 0,
  \end{multline*}
  as $j \to \infty$. By Lebesgue's dominated convergence, and since
  $\Vert g \Vert_\infty, \Vert g_j \Vert_\infty \leq \Vert f
  \Vert_\infty$, we conclude that
  $$
  \int\limits_{- \infty}^\infty g(t)(\psi''(t) - h \psi'(t) + \lambda
  \psi(t))dt = 0,
  $$
  i.e., the continuous function $g$ satisfies \eqref{eq:diffeq} in the
  distributional sense. Therefore, $g$ is smooth and satisfies
  $g''+hg'+\lambda g=0$ in the classical sense. This implies that $g$
  is of the general form
  \begin{equation} \label{eq:g}
  g(t) = 
  c_1 e^{\left( - \frac{h}{2} + \sqrt{\left( \frac{h}{2}  \right)^2 - \lambda} \right)t} + 
  c_2 e^{\left( - \frac{h}{2} - \sqrt{\left( \frac{h}{2}  \right)^2 - \lambda} \right)t}
  \end{equation}
  if $\lambda \neq (h/2)^2$ and
  $$ g(t) = c_1 e^{-\frac{h}{2}t} + c_2 t e^{-\frac{h}{2}t} $$
  if $\lambda = (h/2)^2$. It is straightforward to check for $\lambda
  \neq 0$ that every choice of $(c_1,c_2) \neq (0,0)$ leads to an unbounded
  function $g(t)$. But $g$ must be bounded because of $\Vert g
  \Vert_\infty \le \Vert f \Vert_\infty$. Therefore we conclude that
  $(c_1,c_2) = (0,0)$ in contradiction to $g(0) = c \neq 0$, finishing
  the indirect proof.
\end{proof}

\medskip

\begin{exbdeig} 
  (a) Let $X$ be a rank one symmetric space of non-compact type and $M
  = X / \Gamma$ be a compact quotient. Then every non-constant
  $\Delta_M$-eigenfunction $f \in C^\infty(M)$ gives rise to a bounded
  lift $\widetilde f \in C^\infty(X)$ which is also a
  $\Delta_X$-eigenfunction to the same eigenvalue.  Since $\widetilde
  f$ is non-constant and $\Gamma$-periodic, it does not admit a
  continuous extension to the compacitification $\overline{X}$.

  (b) Let $X^{(p,q)}$ be a Damek-Ricci space with $p,q$ defined as in
  \cite{Rou}.  Then $X^{(p,q)}$ is an asymptotically harmonic manifold
  with $h = p/2 + q$ and there exist radial eigenfunctions
  $\varphi_\mu \in C^\infty(X^{(p,q)})$ satisfying
  $$ \Delta \varphi_\mu  + \left( \mu^2 + 
  \left( \frac{h}{2} \right)^2 \right) \varphi_\mu = 0 \quad 
  \text{and $\varphi_\mu(e) = 1$}, $$
  where $\mu \in {\mathbb C}$ and $e \in X^{(p,q)}$ denotes the
  neutral element in the Damek-Ricci space considered as a solvable
  group. If $ 0 < i \mu < h/2$, we have (see
  \cite[p. 78]{Rou})
  $$ \varphi_\mu(r) \sim c(\mu) e^{(i\mu-h/2)r} \qquad 
  \text{as $r \to \infty$} $$ 
  with suitable constants $c(\mu) \in {\mathbb R} \backslash
  \{0\}$. This means that $\varphi_\mu$ is a bounded eigenfunction
  with {\em trivial} continuous extension to the compactification
  $\overline{X^{(p,q)}}$.
\end{exbdeig}

Now we are in a position to prove our final result (see Theorem
\ref{thm:dirichprob-eigen} in the Introduction) which states that the above examples are the only
two possible cases with regards to continuous extensions of bounded eigenfunctions $f$:
either $f$ cannot be extended to $\overline{X}$ or the extension is trivial.

\begin{thm4} Let $(X,g)$ be a rank one asymptotically harmonic
  manifold satisfying \eqref{eq:curvboundcond}. Let $\lambda \in
  {\mathbb R} \backslash \{0\}$ and $f \in C^\infty(X)$ be an
  eigenfunction $\Delta f + \lambda f = 0$. If $f$ has a continuous
  extension $F \in C(\overline{X})$ then we have necessarily $F
  \vert_{\partial X} \equiv 0$.
\end{thm4}

\begin{proof}
  Assume that $\lambda \neq 0$ and that an eigenfunction $\Delta f +
  \lambda f = 0$ has a continuous extension $F$ on the
  compactification $\overline{X}$. Then we know from Theorem
  \ref{thm:meanvalprop0} that all horospherical means of $f$ over
  horospheres centered at $\xi \in X(\infty)$ agree with $F(\xi)$.  On
  the other hand, we conclude from Theorem \ref{thm:horomeanbdeigfunc}
  that all horospherical means with isoperimetric exhaustions have to
  vanish. Moreover, every horosphere in $X$ has polynomial volume
  growth and, therefore, admits isoperimetric exhaustions. This implies
  that $F \vert_{\partial X} \equiv 0$.
\end{proof}

\end{document}